\documentclass[11pt,a4paper,amsfonts]{amsart}

\usepackage{amsmath}
\usepackage{fancyhdr}
\usepackage{appendix}
\usepackage{amssymb,amscd,amsxtra,calc}
\usepackage{mathrsfs}
\usepackage{multirow}
\usepackage[all]{xy}
\usepackage{longtable}
\usepackage{enumitem}
\usepackage{mathtools}
\usepackage{comment}
\usepackage{tikz-cd}
\usepackage{array}
\usepackage{booktabs}
\usepackage{stmaryrd}
\usepackage[colorlinks,linkcolor=blue,anchorcolor=blue,citecolor=green,backref=none]{hyperref}

\theoremstyle{plain}
 \newtheorem{thm}{Theorem}[section]
 
 \newtheorem{claim}[thm]{Claim}
 
 \newtheorem{corollary}[thm]{Corollary}
 \newtheorem{lemma}[thm]{Lemma}
 \newtheorem{proposition}[thm]{Proposition}
 
 \newtheorem{theorem}[thm]{Theorem}

\theoremstyle{definition}
 \newtheorem{example}[thm]{Example}
 \newtheorem{definition}[thm]{Definition}
 
 \newtheorem*{notation*}{Notation and Terminology}
 \newtheorem{remark}[thm]{Remark}
\theoremstyle{remark}

\newcommand{\diag}{\operatorname{diag}}

\newcommand{\ed}{\operatorname{ed}}

\newcommand{\Fix}{\operatorname{Fix}}
\newcommand{\Gal}{\operatorname{Gal}}
\newcommand{\GL}{\operatorname{GL}}

\newcommand{\id}{\operatorname{id}}

\newcommand{\Ker}{\operatorname{Ker}}

\newcommand{\PGL}{\mathrm{PGL}}
\newcommand{\Proj}{\operatorname{Proj}}

\newcommand{\rank}{\operatorname{rank}}

\newcommand{\Alb}{\operatorname{Alb}}

\newcommand{\Pic}{\operatorname{Pic}}

\usepackage[colorinlistoftodos]{todonotes}

\newcommand{\nc}{\newcommand}
\nc{\cH}{{\mathcal H}}
\nc{\cA}{{\mathcal A}}
\nc{\cG}{{\mathcal G}}
\nc{\cC}{{\mathcal C}}
\nc{\cO}{{\mathcal O}}
\nc{\cI}{{\mathcal I}}
\nc{\cB}{{\mathcal B}}
\nc{\cY}{{\mathcal Y}}
\nc{\cK}{{\mathcal K}}
\nc{\cX}{{\mathcal X}}
\nc{\cS}{{\mathcal S}}
\nc{\cE}{{\mathcal E}}
\nc{\cF}{{\mathcal F}}
\nc{\cZ}{{\mathcal Z}}
\nc{\cQ}{{\mathcal Q}}
\nc{\cN}{{\mathcal N}}
\nc{\cP}{{\mathcal P}}
\nc{\cL}{{\mathcal L}}
\nc{\cM}{{\mathcal M}}
\nc{\cT}{{\mathcal T}}
\nc{\cW}{{\mathcal W}}
\nc{\cU}{{\mathcal U}}
\nc{\cJ}{{\mathcal J}}
\nc{\cV}{{\mathcal V}}

\nc{\bH}{{\mathbb H}}
\nc{\bA}{{\mathbb A}}
\nc{\bG}{{\mathbb G}}
\nc{\bC}{{\mathbb C}}
\nc{\bO}{{\mathbb O}}
\nc{\bI}{{\mathbb I}}
\nc{\bB}{{\mathbb B}}
\nc{\bY}{{\mathbb Y}}
\nc{\bK}{{\mathbb K}}
\nc{\bX}{{\mathbb X}}
\nc{\bS}{{\mathbb S}}
\nc{\bE}{{\mathbb E}}
\nc{\bF}{{\mathbb F}}
\nc{\bZ}{{\mathbb Z}}
\nc{\bQ}{{\mathbb Q}}
\nc{\bN}{{\mathbb N}}
\nc{\bP}{{\mathbb P}}
\nc{\bL}{{\mathbb L}}
\nc{\bM}{{\mathbb M}}
\nc{\bT}{{\mathbb T}}
\nc{\bW}{{\mathbb W}}
\nc{\bU}{{\mathbb U}}
\nc{\bD}{{\mathbb D}}
\nc{\bJ}{{\mathbb J}}
\nc{\bV}{{\mathbb V}}
\nc{\bbZ}{{\mathbb Z}}
\nc{\bR}{{\mathbb R}}
\nc{\fr}{{\rightarrow}}
\nc{\co}{{\nabla}}

\nc{\cu}{{\overlineline{\nabla}}}

\title [Essential dimensions]{Essential dimensions of polarized endomorphisms of certain algebraic surfaces}

\author{Qianli Fang, Yujie Luo and De-Qi Zhang}
\date{}

\address{Department of Mathematics, National University of Singapore, Singapore 119076,
Singapore}

\email{fqianli@u.nus.edu}

\address{Department of Mathematics, National University of Singapore, Singapore 119076, 
Singapore}

\email{lyj96@nus.edu.sg}

\address{Department of Mathematics, National University of Singapore, Singapore 119076, 
Singapore}

\email{matzdq@nus.edu.sg}

\subjclass[2010]
{
Primary 08A35; 
Secondary 14E30, 
14E05. 
}
\keywords{Essential dimensions, Polarized endomorphisms, Galois covers, Ruled surfaces}

\begin{document}

\begin{abstract}
Let $f: X \to X$ be a polarized endomorphism of a smooth projective surface which is birationally ruled. We answer a question of Koll\'ar and Zhuang, in the affirmative, on the incompressibility of $f$, under the assumption that $f$ is Galois and an explicit lower bound of $\deg f$ depending only on $X$. We also give examples showing the optimality of such a lower bound.
\end{abstract}

\maketitle

\tableofcontents


%
%
%
%
\section{Introduction}

Throughout this article, we work over an algebraically closed field {\bf k} of characteristic zero.

\medskip

The essential dimension was first introduced in a modern context by Buhler and Reichstein \cite{BR97, BR99}. It has been studied for decades. We refer the readers to Reichstein's ICM talk for more on its history \cite{Reic10}.  To be precise, the \emph{essential dimension of a generically finite (dominant rational) map} $f: X \dasharrow Y$, denoted by $\mathrm{ed}(f)$, is the smallest integer
$d$ such that $f$ is birational to the pull-back of a map of varieties of dimension $d$ (cf. \cite{BR97, FKW24, KZ}). A generically finite map $f: X\dasharrow Y$ is called \emph{incompressible} if its essential dimension is equal to its maximum possible value $\dim(X)$ \cite{BR97,FKW24,KZ}.

Recall that an endomorphism $f$ of a projective variety $X$ is {\it polarized} if there is an ample divisor $H$ such that $f^*H\sim qH$ for some positive integer $q>1$. In this case, $\deg f = q^{\dim X}$ by the projection formula. Koll\'ar and Zhuang proved a conjecture of Brosnan (cf. \cite[Conjecture~6.1]{FS22}) asserting that the multiplication-by-$m$ map $m_A: A \to A$ of an abelian variety $A$ is incompressible for $m\geq 2$ (cf.~\cite[Theorem~1]{KZ}), and they further asked \emph{whether every polarized endomorphism is incompressible} \cite[Question~19]{KZ}. The  assumption that $f$ is polarized is necessary as there are examples of so-called \emph{int-amplified} endomorphisms $f$ which are not incompressible; see \cite[\S~4]{KZ}, and also Luo, Oguiso and Zhang \cite[Example~5.12]{LOZ25}.

Let $X$ be a normal projective variety and let $f$ be a polarized endomorphism of $X$. Then we have the Kodaira dimension $\kappa(X)\leq 0$ by Nakayama and Zhang \cite[Theorem~1.3]{NZ}. If the Kodaira dimension $\kappa(X) = 0$, then $f$, possibly replaced by an iteration, lifts to a polarized endomorphism $f_A: A \to A$ of an abelian variety $A$ (cf. \cite[Theorem~3.4]{NZ} and \cite[Theorem~1.21]{GKP13}). In \cite{LOZ25}, the authors have studied the essential dimension of polarized endomorphisms of abelian varieties. In dimension two, it remains for us to study polarized endomorphisms of smooth projective surfaces $X$ which are birationally ruled, i.e., the Kodaira dimension $\kappa(X) = -\infty$. Our first result deals with rational surfaces.

We say that $f: X = X_1 \to X = X_2$ is {\it Galois} if the function field extension $K(X_1)/K(X_2)$ is Galois with Galois group $G$ and $G$ acts biregularly on $X_1$ (and hence $X_2 = X_1/G$).

\begin{theorem}\label{Thm: main}
Let $X$ be a smooth projective rational surface, and let $f: X \to X$ be a $q$-polarized endomorphism such that $q > 2$. Then the following assertions hold.
\begin{enumerate}
 \item
 Suppose $X$ is not isomorphic to $\mathbf{P}^1 \times \mathbf{P}^1$, and $f$ is Galois. Then $f$ is incompressible.
 \item
 Suppose $X=\mathbf{P}^1 \times \mathbf{P}^1$, $f$ is Galois and $f$ preserves both fibrations of $X$ (this holds for $f^2$ if $f^2$ is Galois). Then $f$ is incompressible.
\end{enumerate}
\end{theorem}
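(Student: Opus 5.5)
Since $f$ is Galois with group $G$ acting biregularly on $X$ and $X/G\cong X$, I will work with the standard criterion for incompressibility of Galois covers from \cite{KZ} and \cite{FKW24}. If $f$ were compressible, then $f$ would be birational to the pull-back of a $G$-cover $Y\dasharrow Y/G$ with $\dim Y<2$. This would give a $G$-equivariant dominant rational map $X\dasharrow Y$ with $Y$ a curve or a point. The criterion I will use is the fixed point one: if some nontrivial abelian subgroup $A\le G$, say $A\cong(\bZ/p)^2$, has an isolated fixed point on the smooth projective surface $X$ at which the tangent representation is faithful, then $f$ is incompressible. Equivalently, it suffices to find a point $x\in X$ whose stabilizer $G_x$ contains $(\bZ/p)^2$ acting faithfully on $T_xX$. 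A $(\bZ/p)^2$ cannot act on a curve fixing a smooth point with faithful action on the one-dimensional tangent space, and the Reichstein--Youssin fixed point argument then rules out compression. The plan is therefore to produce such a point.

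\textbf{Step 1: reduce to a minimal model.} By the equivariant MMP for polarized endomorphisms (Meng--Zhang), after replacing $f$ by an iterate every $K_X$-negative extremal curve is $f^{-1}$-periodic. The contractions are then $f$-equivariant, and I can descend to $\bP^2$ or to a Hirzebruch surface $\mathbb F_n$. A subtlety is that iterates of a Galois map need not be Galois. I will avoid this by working with the group $G$ directly: $G$ acts on $X$, and the $(-1)$-curves with negative $f$-preimage structure are $G$-stable up to permutation. Alternatively, I note that incompressibility of a blow-down model implies it for $X$, because essential dimension is a birational invariant. Since $X\ncong\bP^1\times\bP^1$ in (1), I can push down to $\bP^2$ or to $\mathbb F_n$ with $n\ge1$, where the ruling is unique and hence $f$-equivariant. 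In case (2) equivariance of both rulings is assumed.

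\textbf{Step 2: the branch divisor.} The ramification formula $K_X=f^*K_X+R_f$ together with total invariance gives a totally invariant branch configuration. For $\bP^2$, $G$ is a finite subgroup of $\PGL_3$ acting with quotient $\bP^2$, so it is a complex reflection group. For $\mathbb F_n$ or $\bP^1\times\bP^1$ preserving the rulings, $G$ acts on both the base $\bP^1$ and the fibres. Here the key input is $q>2$: the induced map on the base (or on the fibres) has degree $q$, so the group acting there is a reflection-type group of order $q\ge3$. I want to locate a point where two independent reflections of order $p$ meet transversally. Examples are the intersection of a $G$-invariant fibre over a branch point of the base map with an invariant section, or, on $\bP^2$, the intersection of two lines of the reflection arrangement. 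At such a point the two commuting pseudo-reflections generate $(\bZ/p)^2$ acting faithfully on the tangent space.

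\textbf{Step 3: the main obstacle.} The hard part is guaranteeing that two \emph{commuting} reflections fix a common point with $q>2$, and that $G$ is not too small (e.g.\ dihedral or cyclic along one direction). On $\bP^2$ I will use the classification of finite complex reflection groups: $\bP^2/G\cong\bP^2$ forces $G$ to come from a reflection group in $\GL_3$, and a degree $q^2>4$ excludes the cases without a rank-two abelian reflection subgroup with an isolated fixed point. On ruled surfaces I will take a fibre $F$ over a fixed point of an abelian (cyclic) subgroup of the base action, and combine it with the fibrewise cyclic group fixing a point of $F$. The case $q=2$ must be excluded since there $G$ can be $(\bZ/2)$-type and fail, as the paper's examples show. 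I expect the case analysis for $\mathbb F_n$ with nontrivial extension data to be the most delicate.
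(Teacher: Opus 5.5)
Your fixed-point criterion is sound. By the Reichstein--Youssin going-down argument, if an abelian subgroup $A\le G$ of rank $2$ has a fixed point on the smooth projective $G$-surface $X$, then $\ed(f)\ge 2$. This is a genuinely different route from the paper. The paper assumes $\ed(f)=1$, uses Proposition~\ref{prop: ed1 descend to P1} to embed $G$ in $\PGL(2,\mathbf{k})$ (so $G$ is cyclic or dihedral), and then rules out each case.

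Your route can be completed in several cases. For (2), take nontrivial $g_1\in G_1$, $g_2\in G_2$ with fixed points $p_1,p_2$; then $\langle g_1\rangle\times\langle g_2\rangle$ fixes $(p_1,p_2)$. For $\mathbf{F}_d$ with $d\ge1$, the stabilizer of $Q=C_0\cap F_b$ is abelian by Lemma~\ref{lem: stabilizer-abelian-at-node}. Chevalley--Shephard--Todd at $Q$ then splits it as $A_1\times A_2$, with $A_1\supseteq\Ker(G\to G|_{C_0})$ and $A_2\neq 1$. This CST step is essential: simply combining a base element $g$ with the fibrewise group can give a cyclic group, e.g.\ when $g^q$ generates that group. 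For $\mathbf{P}^2$ it works through the lifted reflection group $W\subset\GL_3$ with all degrees equal to $q$, but you must actually carry out that classification.

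The genuine gap is the non-minimal case, together with the cyclic case. Step~1 fails as stated. A $(-1)$-curve $E$ is $G$-stable, but by Lemma~\ref{lem: finite neg curve} we have $f(E)^2=-(r/q)^2$, where $r$ is the ramification index along $E$. So $f(E)$ is in general neither $E$ nor a $(-1)$-curve, and $f$ does not descend along the contraction. Passing to an iterate that fixes $E$ destroys the Galois property, as you note yourself. On a $G$-equivariant contraction $X'$ you keep only the $G$-action, and none of the base/fibre-degree structure that Steps~2--3 rely on.

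More fundamentally, your criterion can never apply when $G$ is cyclic, and in that case $\ed(f)\le 1$ by Lemma~\ref{lem: facts of essential dimension}(3). So any proof must show that $G$ is not cyclic. On a non-minimal $X$, $G$ fixes a node of the negative-curve configuration, so $G$ is abelian, and by CST it is $A_1\times A_2$. Either it has rank $2$, which is fine for you, or it is cyclic. Excluding the cyclic case is exactly the work done by Lemmas~\ref{lem: 2-Go}, \ref{lem:relative-minimal} and \ref{lem: pullback is self}:
\begin{enumerate}
\item the negative curves carry alternating ramification labels $1$ and $q^2$;
\item components with label $1$ satisfy $C^2\le-\deg f$, which contradicts the relatively minimal model having at most one negative curve;
\item a cyclic $G$ admits no totally invariant smooth rational curve.
\end{enumerate}
Your proposal names ``$G$ not too small'' as the obstacle but supplies nothing that fills this role.
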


\begin{remark}\label{rem: Thm and Cor}
By Example \ref{preserve projection}, in Theorem \ref{Thm: main}(2), the condition that $f$ preserves both fibrations when $X = \mathbf{P}^1 \times \mathbf{P}^1$, is necessary.
\end{remark}

Suppose $X$ is a smooth projective surface and $f: X \to X$ is a polarized endomorphism. If $X$ is birationally ruled and irrational, then $X$ is a relatively minimal ruled surface over an elliptic curve  
(see \cite[Theorem~1]{Nak02}). In fact, S.~W.~Zhang has a complete classification of algebraic surfaces that admit polarized endomorphisms \cite[Proposition~2.3.1]{ZhS06}.

\begin{theorem}\label{thm: elliptic ruled}
 Let $X$ be an irrational birationally ruled smooth surface, and $f$ a $q$-polarized endomorphism of $X$ with $q>2$. We may write $X=\mathrm{Proj}_C(V)$, with $C$ an elliptic curve and $V$ a rank two vector bundle on $C$. Assume that $f$ is Galois. Then the following assertions hold.
 \begin{enumerate}
 \item Suppose $V$ is indecomposable. Then $f$ is incompressible.
 \item Suppose $V$ is decomposable. Then there exists a positive integer $N$ depending only on $X$ such that if $q>N$ (i.e., $\deg f > N^2$), $f$ is incompressible.
 \end{enumerate}
\end{theorem}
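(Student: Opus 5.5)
We will reduce incompressibility of the Galois endomorphism $f$ to a statement about the Galois group acting on $X$. The criterion we plan to use is the standard one, as in \cite{KZ} and \cite{LOZ25}. Suppose $f: X \to X = X/G$ is Galois with group $G$, and $f$ is compressible. Then, after birational modification, $f$ is the pull-back of a $G$-cover $X' \to Y'$ with $\dim Y' < 2$. Consequently there is a $G$-equivariant dominant rational map $X \dashrightarrow X'$ to a curve (or a point) on which $G$ acts. In the point case $G$ acts trivially, which is absurd. In the curve case $G$ acts faithfully on the fibres, by the fibre-product structure.

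Hence $G$ must act faithfully on the generic fibre of a $G$-equivariant fibration $X \dashrightarrow B$. Since $\deg f = |G| = q^2$, that generic fibre is a curve $F$ with $G \subseteq \mathrm{Aut}$ acting on it. The $G$-action descends to $B$ with trivial action, so $G$ preserves every fibre.

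The key steps are as follows.
\begin{enumerate}
\item Show that the fibration $X \dashrightarrow B$ must be a genuine morphism compatible with the ruling. Since $X$ is irrational, $B$ is a curve of genus $\ge 1$, or the fibration is the ruling $\pi: X \to C$ itself.
\item Argue as follows. If the fibres $F$ are rational, then $X \dashrightarrow B$ is birationally the ruling over $C$. Then $G$ acts trivially on $C$, so $f$ induces on $C$ the identity modulo $G$. But $f$ descends to an endomorphism $f_C$ of $C$ of degree $q > 1$, by polarization (the ruling is $f$-equivariant since $f$ is polarized and the ruling is the Albanese map). Hence $G$ must move points of $C$, a contradiction.
\item If the fibres $F$ are not rational, then they dominate $C$, since the fibres of $\pi$ are $\mathbf{P}^1$. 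We then analyse the subgroup $H = \Ker(G \to \mathrm{Aut}(C))$. This kernel acts on the fibres $\mathbf{P}^1$ with order $\deg(f|_{\text{fibre}}) = q$. Its faithful action on $F$ and triviality on $B$ should force a strong constraint.
\end{enumerate}

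For part (1), indecomposable $V$ means there are essentially no sections with negative self-intersection. The known description of such surfaces (Atiyah's bundles, making $X$ a quotient of $C \times \mathbf{P}^1$ or of a $\mathbf{P}^1$-bundle by a finite translation group) leaves few automorphism groups. We expect to show directly that any $G$-invariant fibration is $\pi$, and then conclude by step 2.

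For part (2), $V = L_1 \oplus L_2$. Here $X$ carries a $\mathbf{G}_m$-action, and $f$ may preserve the two disjoint sections. Then $f$ restricted to the fibres looks like $z \mapsto z^{q}$ twisted by translations. The group $G$ is a split extension of a subgroup of $C[q]$-type translations by $\mu_q$ acting on fibres. A compressing fibration would be given by a $G$-invariant rational function mixing the fibre coordinate $z$ and $C$. Such an invariant exists only if a torsion relation like $L_1 \otimes L_2^{-1}$ having order dividing something related to $q$ holds. Taking $N$ larger than the order (or a bound depending on $\deg(L_1 \otimes L_2^{-1})$ and torsion data of $X$) rules this out.

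The main obstacle is step 3 in the decomposable case: classifying $G$-equivariant non-ruling fibrations and extracting an explicit $N$ depending only on $X$. We would first try to compute $G$ explicitly as an extension of $C[q]$ by $\mu_q$. We would then show that any invariant pencil would produce a $\mu_q$-invariant section of $L^{\otimes k}$ with $k$ bounded in terms of $X$, which is impossible once $q > N$.
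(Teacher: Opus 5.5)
\textbf{The reduction is oriented the wrong way, so the key constraint on $G$ never appears.} Suppose $f$ is Galois with group $G$ and $\ed(f)=1$. By \cite[Lemma~2.2]{BR97}, as in Proposition~\ref{prop: ed1 descend to P1}, $K(X)$ then contains a $G$-stable subfield $K(X')$ of transcendence degree one such that $K(X')/K(X')^G$ is Galois with group $G$. Geometrically, $X\dashrightarrow X'$ is $G$-equivariant and $G$ acts \emph{faithfully on the base curve $X'$}, permuting the fibres. It does not act trivially on the base and faithfully on a connected fibre. A $G$-invariant map $X\dashrightarrow B$ factors through $X/G$ and says nothing about $\ed(f)$. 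So your steps 1--3, and the ``$G$-invariant rational function'' in your sketch of (2), are aimed at the wrong object. Step 1 also asserts that the base has genus $\ge 1$ or the map is $\pi$, whereas for the correct map $X\dashrightarrow X'$ the base is necessarily $\mathbf{P}^1$.

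With the correct orientation, the first thing to extract is this. Since $X$ dominates $X'$, we have $g(X')\le 1$. If $X'$ were elliptic, $X\dashrightarrow X'$ would factor through the Albanese map $\pi$, and $G$ would act faithfully on $C$. That is impossible, since $G|_C$ has order $\deg f_C=q<q^2$. Hence $X'\cong\mathbf{P}^1$, so $G\hookrightarrow\PGL_2(\mathbf{k})$ with $|G|=q^2$, and $G$ is cyclic or dihedral by Lemma~\ref{lem: being cyc or dih}. Your proposal never reaches this, and the whole proof rests on it.

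\textbf{What then gives (1) and (2).} The kernel $H=\Ker(G\to G|_C)$ has order $q$ and is cyclic; in the dihedral case this forces $q=4$, $G=D_{16}$ and $H=\langle\tau^2\rangle$. A generator $h$ has order $q>2$ and acts on the generic fibre $\mathbf{P}^1_{K(C)}$. By Lemma~\ref{lem:p1-fixed-points-finite-order} its two fixed points are $K(C)$-rational; this is where $q>2$ enters. They give two disjoint $h$-fixed sections, so $V$ splits, which proves part (1). Your plan for (1), via Atiyah's classification and ``any invariant fibration is $\pi$'', has no mechanism that produces a contradiction.

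For (2) you must still exclude dihedral $G$ (Lemma~\ref{lem: no dihedral Galois cover for elliptic ruled}). Then, for $G=\langle t\rangle\cong\mathbb{Z}/q^2\mathbb{Z}$, you must prove two divisibilities.
\begin{itemize}
\item If $\deg\mathcal{L}\neq 0$, then $q\mid\deg\mathcal{L}$. Each element of $G|_C$ is a translation preserving $\mathcal{L}$ (the alternative $\mathcal{L}^{-1}$ is excluded by degree), so $G|_C\subset K(\mathcal{L})=C[\deg\mathcal{L}]$.
\item If $\deg\mathcal{L}=0$, then $q\mid\mathrm{ord}(\mathcal{L})$. Take $u$ with $\mathrm{div}(u)=\ell(S_\infty-S_0)$, prove $h^*u=u$, and restrict to a fibre on which $h$ acts by $z\mapsto\xi_q z$.
\end{itemize}
Your description of $G$ as a split extension of translations by $\mu_q$ also fails exactly in the relevant case. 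In the optimal Examples~\ref{ex: positive degree q=degree} and \ref{ex: torsion optimality}, $G\cong\mathbb{Z}/q^2\mathbb{Z}$, which is a non-split extension.
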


\begin{remark}\label{rem: low bd of deg for ed}
By the proof, we have an optimal choice of the integer $N$ in Theorem~\ref{thm: elliptic ruled}(2). In fact, we may write the decomposable $V$ as $\mathcal{O}_C\oplus \mathcal{L}$. When $\deg(\mathcal{L})\neq 0$, we may take $N:=\max\{2,  {|\deg(\mathcal{L})|}\}$. When $\deg(\mathcal{L})=0$, $\mathcal{L}$ is a torsion line bundle by \cite[Proposition~2.3.1]{ZhS06}, and we may take $N:=\max\{2,  {\rm ord}(\mathcal{L})\}$.
For both cases, the values for $N$ are optimal; see Examples~\ref{ex: positive degree q=degree} and \ref{ex: torsion optimality}.
\end{remark}

Here is the organization of the paper. At the end of Section \ref{sect: rat surf}, we prove Theorem \ref{Thm: main}. In Section \ref{sect: irrat surf}, after preparing some lemmas, we prove Theorem \ref{thm: elliptic ruled} and Remark \ref{rem: low bd of deg for ed}.

\vspace{2mm}

{\bf Acknowledgements.}
The authors are supported, respectively, by an NUS graduate scholarship, the Peng Tsu Ann Assistant Professorship, and the ARF: A-8002487-00-00 of NUS.
\medskip

\section{Preliminary results}
We adopt the standard notation from \cite{Ful}, \cite{Har}, \cite{Laz}, \cite{KM} and \cite{Mum74}. We begin with some notation for the ramification divisors and group actions.

\subsection{Ramification divisors and group actions}

\begin{definition}\label{defn: fixed locus pointwise}
(1)
Let $f: X \to Y$ be a finite surjective morphism between $\mathbb{Q}$-factorial normal varieties. For canonical divisors (regarded as integral Weil divisors), we have the {\it ramification divisor formula}:
$$K_X = f^*K_Y + R_f$$
where $R_f$ is an effective Weil integral divisor, called the {\it ramification divisor} of $f$. Write $R_f = \sum (r_i-1) R_i$ for some distinct prime divisors $R_i$ and some positive integers $r_i$. Then $r_i$ is called the {\it ramification index} of $f$ along $R_i$. 

(2)
Let $G$ be a finite group acting faithfully on a variety $X$. For each element $g\in G$, define the \emph{(pointwise) fixed locus} of $g$ as
$$\Fix(g):= \{x \in X \mid g(x) = x\}.$$ Note that $\Fix(g)$ is Zariski closed in $X$. Let $f: X\to X/G$ be the quotient morphism which is finite. Then the ramification divisor $R_f$ has support equal to the union of codimension one components $R_i$ of $\mathrm{Fix}(G)$. Moreover, we define the (pointwise) {\it stabilizer} as
$$G_{R_i} = \{g\in G\mid g(x)=x \text{ for each }x\in R_i\}.$$ Then the ramification index $r_i$ of $f$ along $R_i$ is equal to $|G_{R_i}|$. In particular, $r_i$ divides $|G|$.
\end{definition}

\begin{definition}
    Let $V$ be a finite-dimensional vector space over $\bf k$. A nontrivial element $\sigma\in \GL(V)$ of finite order is called a {\it pseudo-reflection} if it fixes a hyperplane in $V$ pointwise. A finite subgroup $\Gamma\subset \GL(V)$ is called a {\it pseudo-reflection group} if it is generated by pseudo-reflections. 
\end{definition}

\begin{theorem}(\cite[Corollary~1.8]{Sat12})\label{lem: generated by pref}
    Let $X$ be a smooth affine variety with a faithful action by a finite group $G$. Let $P$ be a point in $X$ fixed by $G$. Then $G$ is generated by pseudo-reflections if and only if $X/G$ is smooth at the image of $P$.
\end{theorem}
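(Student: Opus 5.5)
The plan is to reduce to the linear case and apply the Chevalley--Shephard--Todd theorem. Both the hypothesis and the conclusion are local at $P$ and its image, so we may assume $X$ is connected, hence irreducible since it is smooth. Set $V := T_P X$, the Zariski tangent space at $P$. Since $G$ fixes $P$, it acts on the local ring $\mathcal{O}_{X,P}$, on its maximal ideal $\mathfrak{m}$, and hence on the cotangent space $\mathfrak{m}/\mathfrak{m}^2 = V^*$.

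\textbf{Step 1: linearize the action formally (Cartan's lemma).}
\begin{itemize}
\item Because $\mathrm{char}\,{\bf k}=0$, the surjection $\mathfrak{m}\to \mathfrak{m}/\mathfrak{m}^2$ has a $G$-equivariant section, obtained by averaging with the Reynolds operator.
\item Since $X$ is smooth at $P$, this section induces a $G$-equivariant isomorphism $\widehat{\mathcal{O}}_{X,P}\cong {\bf k}[[V^*]]$, with $G$ acting linearly on the right-hand side.
\item The linear action on $V$ is faithful. If $g$ acts trivially on $V$, it acts trivially on the completion. Since $\mathcal{O}_{X,P}$ injects into its completion, $g$ acts trivially on $K(X)$, so $g=\id$ by faithfulness on the irreducible $X$.
\end{itemize}

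\textbf{Step 2: identify the local ring of the quotient.} Let $Q$ be the image of $P$ in $Y:=X/G=\mathrm{Spec}\,\mathcal{O}(X)^G$. Because $G$ is finite and $\mathrm{char}\,{\bf k}=0$, taking invariants is exact via the Reynolds operator. Hence invariants commute with localization and completion: the fiber over $Q$ is the single orbit $\{P\}$, and so
$$\widehat{\mathcal{O}}_{Y,Q}\cong \bigl(\widehat{\mathcal{O}}_{X,P}\bigr)^G\cong {\bf k}[[V^*]]^G.$$
Therefore $Y$ is smooth at $Q$ if and only if ${\bf k}[[V^*]]^G$ is a regular local ring.

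\textbf{Step 3: pass from the formal to the graded setting and apply Chevalley--Shephard--Todd.} The ring $S:={\bf k}[V^*]^G$ is a finitely generated graded algebra, and ${\bf k}[[V^*]]^G$ is its completion at the irrelevant ideal $S_+$. For a positively graded ${\bf k}$-algebra $S$ with $S_0={\bf k}$, the following holds: the completion $\widehat S$ is regular if and only if $S$ is a polynomial ring. To prove this, choose minimal homogeneous generators of $S_+$. Their number equals $\dim_{\bf k} S_+/S_+^2$, which is the embedding dimension of $\widehat S$. Regularity means this number equals $\dim S=\dim V$. A graded surjection from a polynomial ring in $\dim S$ variables onto the domain $S$ of the same dimension is then an isomorphism. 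By the Chevalley--Shephard--Todd theorem, $S$ is polynomial if and only if $G\subset \GL(V)$ is generated by pseudo-reflections. This proves the equivalence.

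\textbf{Main obstacle.} I expect the delicate points to be the equivariant formal linearization and the commutation of invariants with completion. Both rely essentially on $\mathrm{char}\,{\bf k}=0$ through the Reynolds operator. The graded-versus-formal comparison in Step 3 is the other place where care is needed.
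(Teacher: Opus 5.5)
The paper does not prove this statement; it quotes it from Satriano's Corollary~1.8, which is proved for finite linearly reductive group schemes over fields of arbitrary characteristic. Your argument is a correct, self-contained proof of the case the paper actually uses, namely a finite group in characteristic zero, and it follows the classical route in four steps:
\begin{itemize}
\item an equivariant formal linearization of $\widehat{\mathcal O}_{X,P}$, obtained from a Reynolds-averaged section of $\mathfrak m\to\mathfrak m/\mathfrak m^2$;
\item the identification $\widehat{\mathcal O}_{Y,Q}\cong(\widehat{\mathcal O}_{X,P})^G$, using that the fiber over $Q$ is $\{P\}$;
\item the equivalence, for a connected graded algebra $S$, between regularity of $\widehat S$ and $S$ being a polynomial ring;
\item the classical Chevalley--Shephard--Todd theorem for $G\subset\GL(T_PX)$.
\end{itemize}

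In the second step, exactness of invariants is more than you need. Invariants are the kernel of the $B$-linear map $a\mapsto(ga-a)_{g}$, and completion is flat, so invariants commute with completion automatically. The same argument, applied to $V\to V/G$, justifies your identification of $k[[V^*]]^G$ with the completion of $S$.

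The two routes trade off as follows.
\begin{itemize}
\item The citation gives brevity and generality. In positive characteristic, or for non-reduced group schemes, averaging is unavailable, and one needs linear reductivity and Satriano's extension of Chevalley--Shephard--Todd.
\item Your route gives transparency. It makes explicit that ``generated by pseudo-reflections'' refers to the image of $G$ in $\GL(T_PX)$. Your Step~1 also reproves, for finite groups, the faithfulness of the tangent representation, which the paper quotes separately as Lemma~\ref{lem: reductive faithful act} and always uses together with Theorem~\ref{lem: generated by pref}.
\end{itemize}

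One small inaccuracy: faithfulness is not a local hypothesis, so ``we may assume $X$ is connected'' is not a valid reduction in general. If $G$ acted trivially on the component through $P$ but nontrivially elsewhere, the tangent representation would not be faithful. This is harmless here only because a variety is irreducible, and that irreducibility is exactly what your faithfulness argument in Step~1 uses.
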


We frequently use the following lemma due to H. Kraft and I. Stampfli \cite{KS13} whose proof works for any quasi-projective variety.

\begin{lemma}(\cite[Lemma~2.2]{KS13})\label{lem: reductive faithful act}
Let G act faithfully on a quasi-projective variety $X$. Assume that $x_0\in X$ is a fixed closed point and that there is a $G$-stable decomposition (for the maximal ideal of the local ring at $x_0$) $$(*) \hskip 1pc m_{x_0}=V\oplus m_{x_0}^2.$$ Then the tangent representation $\tau:G\to \GL(T_{x_0}X)$ is faithful. In particular, a G-stable decomposition $m_{x_0} = V\oplus m_{x_0}^2$ as in the $(*)$ above always exists if G is a finite or, more generally, a reductive algebraic group.
\end{lemma}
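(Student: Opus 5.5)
We would prove Lemma~\ref{lem: reductive faithful act} in two parts: first that a $G$-stable decomposition $(*)$ forces the tangent representation to be faithful, and then that such a decomposition always exists when $G$ is finite or reductive.

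\textbf{Faithfulness.} Suppose $g\in G$ acts trivially on $T_{x_0}X=(m_{x_0}/m_{x_0}^2)^\vee$. We want to show that $g$ acts trivially on $\mathcal{O}_{X,x_0}$. Then $g$ fixes an open neighbourhood of $x_0$ pointwise, hence is the identity on the (irreducible) variety $X$, and faithfulness gives $g=1$.

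Since $V\cong m_{x_0}/m_{x_0}^2$ $G$-equivariantly via the decomposition, $g$ acts trivially on $V$. The ideal $m_{x_0}^k$ is generated by products of elements of $V$ modulo $m_{x_0}^{k+1}$; more precisely, $m_{x_0}=V+m_{x_0}^2$ gives $m_{x_0}^k=V^k+m_{x_0}^{k+1}$ by induction. Hence $g$ acts trivially on each quotient $m_{x_0}^k/m_{x_0}^{k+1}$. Actually more is true: $g$ fixes the subalgebra $\mathbf{k}[V]\subset\mathcal{O}_{X,x_0}$ pointwise. Since $\mathbf{k}[V]$ surjects onto $\mathcal{O}_{X,x_0}/m_{x_0}^{k}$ for every $k$, $g$ acts trivially on every $\mathcal{O}_{X,x_0}/m_{x_0}^k$, and hence on the completion $\widehat{\mathcal{O}}_{X,x_0}$. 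By Krull's intersection theorem $\mathcal{O}_{X,x_0}\hookrightarrow \widehat{\mathcal{O}}_{X,x_0}$, so $g$ acts trivially on $\mathcal{O}_{X,x_0}$, as required.

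\textbf{Existence of $(*)$.} For the second statement, first choose a $G$-stable affine open neighbourhood $U$ of $x_0$. This is possible for quasi-projective $X$ when $G$ is finite: take an affine open containing the finite orbit and intersect its translates. For reductive $G$ acting on quasi-projective $X$ one argues similarly, or works in the $G$-stable finite-dimensional subspaces of $\mathcal{O}(U)$, which are available since the action is rational.

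Then $m=m_{x_0}\cap\mathcal{O}(U)$ is a $G$-stable ideal and $\mathcal{O}(U)$ is a locally finite rational $G$-module. Choose a finite-dimensional $G$-stable subspace $W\subset m$ surjecting onto $m/m^2$. By complete reducibility (linear reductivity in characteristic zero, Maschke for finite $G$), $W\cap m^2$ has a $G$-stable complement $V$ in $W$. This $V$ maps isomorphically onto $m/m^2$, giving $m_{x_0}=V\oplus m_{x_0}^2$ after localizing.

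\textbf{Main obstacle.} The delicate step is the first: passing from triviality on the graded pieces to triviality on $\mathcal{O}_{X,x_0}$. This is exactly where the splitting $V$ is needed, since triviality on the associated graded alone does not suffice. We resolve it by the generation of $\mathcal{O}_{X,x_0}/m_{x_0}^k$ by $\mathbf{k}[V]$ together with Krull's intersection theorem.
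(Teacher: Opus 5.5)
Your faithfulness argument is correct. It is the standard proof of \cite[Lemma~2.2]{KS13}, which the paper cites rather than reproves. The splitting makes $V$ a $G$-equivariant copy of $m_{x_0}/m_{x_0}^2$, and $\mathbf{k}[V]$ surjects onto each $\mathcal{O}_{X,x_0}/m_{x_0}^k$. Krull's intersection theorem then gives triviality on $\mathcal{O}_{X,x_0}$, and irreducibility of $X$ does the rest.

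Your existence argument for finite $G$ is also fine. For an affine open $U\ni x_0$, the intersection $\bigcap_{g\in G} gU$ is a $G$-stable affine open because $X$ is separated, and then Maschke applies. Alternatively, simply average a linear section of $m_{x_0}\to m_{x_0}/m_{x_0}^2$ over $G$. Finite Galois groups are the only case in which the paper uses this lemma.

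The genuine gap is the reductive case of the existence clause: your step ``one argues similarly'' does not work, and neither does your alternative.
\begin{itemize}
\item You can only intersect finitely many translates of an affine open.
\item A fixed point of a reductive group on a non-normal quasi-projective variety need not have a $G$-stable affine neighbourhood; Sumihiro's theorem requires normality.
\item Without such a neighbourhood there is no rational $G$-module to which complete reducibility applies. The local ring itself is not locally finite: the $\mathbb{G}_m$-translates $1/(1-tx)$ of $1/(1-x)\in\mathcal{O}_{\mathbb{A}^1,0}$ span an infinite-dimensional space.
\end{itemize}

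In fact the clause is false as stated. Let $X$ be the nodal curve obtained from $\mathbf{P}^1$ by gluing $0$ and $\infty$, with $\mathbb{G}_m$ acting by $s\mapsto ts$. This is a faithful action on a projective curve fixing the node $x_0$. Any $\mathbb{G}_m$-stable open set containing $x_0$ contains the dense orbit, so it is all of $X$, which is not affine. The cotangent space $m_{x_0}/m_{x_0}^2$ is the sum of the characters $t$ and $t^{-1}$, one for each branch.

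A $G$-stable $V$ as in $(*)$ would be $G$-isomorphic to $m_{x_0}/m_{x_0}^2$. So it would contain some $0\neq f\in\mathcal{O}_{X,x_0}\subset\mathbf{k}(s)$ with $f(ts)=t^{\pm1}f(s)$ for all $t\in\mathbf{k}^*$. Comparing Laurent expansions gives $f=cs^{\pm1}$, which is regular at both $0$ and $\infty$ only if $c=0$, a contradiction. The tangent representation is still faithful here; only the existence clause breaks.

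So the reductive part of both your proof and the lemma must be restricted. One option is $X$ affine, as in \cite{KS13}. More generally, it suffices that $x_0$ admit a $G$-stable affine neighbourhood, and then your $W$-argument goes through verbatim.
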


\begin{lemma}\label{lem: being cyc or dih}
Let $h: \mathbf{P}^1 \to \mathbf{P}^1$ be a Galois cover with Galois group $G$. Suppose $\deg(h)$ is a perfect square. Then the following assertions hold.
\begin{enumerate}
\item[(1)]
$G$ is either a cyclic group or a dihedral group $D_{2n}$ of order $2n$ ($n \ge 2$).
\item[(2)]
Suppose that $G = D_{2n}$, and write
$$G = \langle \tau, \sigma \mid \tau^n = 1, \sigma^2 = 1, \sigma \tau \sigma^{-1} = \tau^{-1} \rangle.$$ Then we have:
\item[(2i)]
The abelianization $G^{ab}:=G/[G,G]$ is $\mathbb{Z}/2\mathbb{Z}$ if $n$ is odd, and is $\mathbb{Z}/2\mathbb{Z} \times \mathbb{Z}/2\mathbb{Z}$ if $n$ is even. 
\item[(2ii)]
The only abelian subgroups of $G$ are $\langle \tau^r \rangle$ for some $r$, or  $\langle \sigma \tau^r \rangle \cong {\mathbb Z}/2{\mathbb Z}$ for some $r$, or $\langle \sigma \tau^r, \tau^{n/2} \rangle \cong \mathbb{Z}/2\mathbb{Z} \times \mathbb{Z}/2\mathbb{Z}$ (when $n$ is even, for some $r$). 
\end{enumerate}
  
\end{lemma}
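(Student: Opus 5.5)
Since $h$ is Galois, its group $G$ acts faithfully on $\mathbf{P}^1$ with $\mathbf{P}^1/G\cong\mathbf{P}^1$, so $G$ is a finite subgroup of $\mathrm{Aut}(\mathbf{P}^1)=\PGL_2(\mathbf{k})$ and $|G|=\deg(h)$. I will invoke the classical classification of finite subgroups of $\PGL_2(\mathbf{k})$ in characteristic zero (Klein): $G$ is cyclic $\mathbb{Z}/n\mathbb{Z}$, dihedral $D_{2n}$, or one of $A_4$, $S_4$, $A_5$, of orders $12$, $24$, $60$. None of $12,24,60$ is a perfect square, so the hypothesis that $\deg(h)$ is a perfect square leaves only the cyclic and dihedral cases. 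This gives (1). (The Klein group $\mathbb{Z}/2\times\mathbb{Z}/2=D_4$ is included as the dihedral case $n=2$.)

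For (2i), I will compute $[G,G]$ in the presentation. From $\sigma\tau\sigma^{-1}=\tau^{-1}$ we get $[\sigma,\tau]=\tau^{-2}$, so $\langle\tau^2\rangle\subseteq[G,G]$. Conversely, $\langle\tau^2\rangle$ is normal, being characteristic in the normal cyclic subgroup $\langle\tau\rangle$. In the quotient $G/\langle\tau^2\rangle$ the image of $\tau$ has order at most $2$ and commutes with $\sigma$, so this quotient is abelian and $[G,G]=\langle\tau^2\rangle$. If $n$ is odd, then $\langle\tau^2\rangle=\langle\tau\rangle$ and $G^{ab}\cong\mathbb{Z}/2\mathbb{Z}$, generated by $\sigma$. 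If $n$ is even, then $\langle\tau^2\rangle$ has index $4$, and the quotient is generated by two commuting involutions $\bar\tau,\bar\sigma$, so it is $\mathbb{Z}/2\mathbb{Z}\times\mathbb{Z}/2\mathbb{Z}$.

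For (2ii), every element of $G$ is either $\tau^r$ or a reflection $\sigma\tau^r$, and each reflection has order $2$. Let $A$ be an abelian subgroup.
\begin{itemize}
\item If $A\subseteq\langle\tau\rangle$, then $A$ is cyclic, hence $A=\langle\tau^r\rangle$.
\item Otherwise $A$ contains some $\sigma\tau^r$. Its centralizer is computed from $(\sigma\tau^r)\tau^s(\sigma\tau^r)^{-1}=\tau^{-s}$: the element $\tau^s$ commutes with it iff $\tau^{2s}=1$, i.e.\ $\tau^s\in\{1,\tau^{n/2}\}$. Two reflections $\sigma\tau^r,\sigma\tau^{t}$ commute iff $\tau^{2(t-r)}=1$, i.e.\ $\sigma\tau^t\in\{\sigma\tau^r,\sigma\tau^{r+n/2}\}$, using $\sigma\tau^r\sigma\tau^t=\tau^{t-r}$.
\end{itemize}
Hence $A\subseteq\{1,\sigma\tau^r,\tau^{n/2},\sigma\tau^{r+n/2}\}$, where the last two elements are present only when $n$ is even. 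This shows $A$ is either $\langle\sigma\tau^r\rangle\cong\mathbb{Z}/2\mathbb{Z}$ or $\langle\sigma\tau^r,\tau^{n/2}\rangle\cong\mathbb{Z}/2\mathbb{Z}\times\mathbb{Z}/2\mathbb{Z}$.

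The only nontrivial input is Klein's classification; the rest is elementary group theory. The one point to watch is that $D_4$ is abelian, but it is still covered by the listed cases.
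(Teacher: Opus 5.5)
Your proposal is correct and follows essentially the same route as the paper: Klein's classification of finite subgroups of $\PGL_2(\mathbf{k})$, with $A_4,S_4,A_5$ excluded because $12,24,60$ are not squares, and then $[G,G]=\langle\tau^2\rangle$ for (2i). You also supply two details the paper leaves implicit: that $\langle\tau^2\rangle$ is normal with abelian quotient (the paper just writes $[G,G]=\langle[\tau,\sigma]\rangle$), and the centralizer computation behind (2ii), which the paper says ``follows from the description of $G$.''
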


\begin{proof}
(1) We note that $G$ is isomorphic to a finite subgroup of $\mathrm{PGL}(2, \mathbf{k})$. By the classification of finite subgroups of $\mathrm{PGL}(2, \mathbf{k})$ \cite[Introduction]{Bea10}, $G$ is cyclic, dihedral or isomorphic to $A_4$, $S_4$, or $A_5$. Since $|G|$ is a perfect square, the cases $A_4$, $S_4$, $A_5$, whose orders are $12$, $24$, $60$, respectively, are excluded. Hence $G$ is either a cyclic group or a dihedral group.

(2) Now, suppose that $G \cong D_{2n}$. We may write $G$ as
$$G = \langle \tau, \sigma \mid \tau^n = 1, \sigma^2 = 1, \sigma \tau \sigma^{-1} = \tau^{-1} \rangle.$$ In particular, the commutator subgroup $[G, G]=\langle [\tau, \sigma]\rangle=\langle \tau^2\rangle$. When $n$ is odd, $[G,G]=\langle \tau\rangle$, hence $G^{ab} = \langle \bar{\sigma}\rangle \cong \mathbb{Z}/2\mathbb{Z}$. When $n$ is even, $G^{ab} = \langle \bar{\sigma}, \bar{\tau}\rangle \cong \mathbb{Z}/2\mathbb{Z} \times \mathbb{Z}/2\mathbb{Z}$. This proves (2i), while (2ii) follows from the description of $G$.
\end{proof}

\begin{lemma}\label{lem: stabilizer-abelian-at-node}
Let $G$ be a finite group acting faithfully on a smooth surface $X$. Suppose that there are two $G$-stable smooth curves $C_1, C_2$ such that $\{p\} = C_1 \cap C_2$ and they intersect transversely at $p$. Then $G$ is abelian.
\end{lemma}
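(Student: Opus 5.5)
Since $p$ is the unique point of $C_1 \cap C_2$ and both curves are $G$-stable, every $g\in G$ maps $p$ into $g(C_1)\cap g(C_2) = C_1\cap C_2=\{p\}$. Hence $p$ is fixed by all of $G$. The plan is to use the faithfulness of the tangent representation at $p$ (Lemma~\ref{lem: reductive faithful act}) to embed $G$ into the group of $2\times 2$ diagonal matrices, which is abelian.

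\textbf{Step 1.} Since $G$ is finite and fixes the closed point $p$, Lemma~\ref{lem: reductive faithful act} applies. It gives a faithful tangent representation
$$\tau: G \to \GL(T_pX),$$
where $T_pX$ is two-dimensional because $X$ is a smooth surface. If $X$ is only quasi-projective, we may first shrink to a $G$-stable affine or quasi-projective neighbourhood of $p$, for instance the complement of the $G$-translates of a closed set avoiding $p$. Faithfulness survives this restriction because a finite group acting faithfully on an irreducible variety acts faithfully on any dense open subset.

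\textbf{Step 2.} Because $C_i$ is smooth at $p$ and $G$-stable, the tangent line $L_i := T_pC_i \subset T_pX$ is a one-dimensional subspace preserved by $\tau(G)$. Transversality at $p$ means $L_1 \neq L_2$, so
$$T_pX = L_1 \oplus L_2.$$
In a basis adapted to this splitting, every $\tau(g)$ is diagonal. Diagonal matrices commute, so $\tau(G)$ is abelian. Since $\tau$ is injective, $G$ is abelian.

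There is no real obstacle here. The only point needing care is justifying that $G$ fixes $p$ and that the hypotheses of Lemma~\ref{lem: reductive faithful act} are met. Those hypotheses are a $G$-stable decomposition $m_p = V\oplus m_p^2$, which exists automatically for finite $G$ in characteristic zero, and the quasi-projectivity of $X$ or of a $G$-stable neighbourhood of $p$.
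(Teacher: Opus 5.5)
Your proof is correct and follows the paper's argument: $p$ is fixed by $G$, the tangent representation at $p$ is faithful by Lemma~\ref{lem: reductive faithful act}, and it preserves the two distinct tangent lines $T_pC_1$ and $T_pC_2$, so $G$ embeds into the diagonal matrices in $\GL(T_pX)$. You also spell out two small details the paper leaves implicit: why $p$ is fixed, via $g(p)\in g(C_1)\cap g(C_2)=\{p\}$, and how to pass to a $G$-stable quasi-projective neighbourhood of $p$.
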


\begin{proof}
By our assumption, $p$ is a fixed point of $G$. We may choose local coordinates $x,y$ at $p$ such that locally
$C_1=\{x=0\}, \, C_2=\{y=0\}.$

Since every element of $G$ preserves both $C_j$, it preserves the two tangent lines $T_pC_1$ and $T_pC_2$ in $T_pX$. Hence the tangent representation (which is faithful by Lemma \ref{lem: reductive faithful act}) $$\rho\colon G\longrightarrow \GL(T_pX)$$ splits as a direct sum of the representations $\rho_i: G\to \mathrm{GL}(T_p C_i)$ ($i=1,2$). In particular, all elements of $\rho(G)$ are simultaneously diagonalizable. It follows that $G$ is abelian.
\end{proof}

\subsection{Essential dimensions}

We start with the definition of the essential dimension of a generically finite dominant rational map.

\begin{definition}[cf. {\cite[Definition~2.1]{BR97}}]
 Let $E/F$ be a finite field extension over a base field $\mathbf{k}$. We say that $E/F$ is \emph{defined over} a subfield $F_0$ of $F$ if there exists an extension $E_0/F_0$ over $\mathbf{k}$ such that $[E_0:F_0]=[E:F]$, $E_0 \subseteq E$ and $E_0F = E$. The \emph{essential dimension} of $E/F$, which we will usually abbreviate as $\mathrm{ed}_\mathbf{k}(E/F)$ (or $\mathrm{ed}(E/F)$ when $\mathbf{k}$ is clear from the context), is the minimal value of $\mathrm{trdeg}_\mathbf{k}(F_0)$ as $F_0$ ranges over all fields for which $E/F$ is defined over $F_0$. The \emph{essential dimension of a generically finite dominant rational map} $f: X\dasharrow Y$, denoted by $\mathrm{ed}(f)$, is defined as the essential dimension of the corresponding function field extension $\mathrm{ed}_\mathbf{k}(K(X)/K(Y))$.
\end{definition}

We collect the following facts on essential dimensions defined above (see \cite[Lemma~2.2]{LOZ25}).

\begin{lemma}\label{lem: facts of essential dimension}
We fix the base field $\mathbf{k}$ of characteristic zero.
\begin{enumerate}
 \item If two generically finite dominant rational maps are birationally equivalent, then they have the same essential dimension. In particular, the essential dimension of a birational map is zero.
 \item Let $f: X\dasharrow Y$ and $g: Y\dasharrow Z$ be generically finite dominant rational maps. Then $\max\{\mathrm{ed}(g),\mathrm{ed}(f)\}\leq \mathrm{ed} (g\circ f)$. In particular, $\mathrm{ed}(g_2\circ f\circ g_1)=\mathrm{ed}(f)$, when $g_1$ (resp. $g_2$) is a birational map on $X$ (resp. $Y$).
 \item Let $f: X \dasharrow Y$ be a generically finite dominant rational map such that the induced field extension $K(X)/K(Y)$ is abelian Galois with Galois group $G$. Then $\mathrm{ed}(f) \leq \rank(G)$.
\end{enumerate}
\end{lemma}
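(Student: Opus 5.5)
The three items of Lemma~\ref{lem: facts of essential dimension} are standard facts. I would prove each directly from the definition of $\mathrm{ed}$ as the minimal transcendence degree of a field of definition $F_0$, as in \cite[Lemma~2.2]{LOZ25}.

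For (1), a birational equivalence between $f\colon X\dasharrow Y$ and $f'\colon X'\dasharrow Y'$ gives isomorphisms $K(X)\cong K(X')$ and $K(Y)\cong K(Y')$ over $\mathbf{k}$, compatible with the inclusions. Such an isomorphism transports any field of definition $F_0\subseteq K(Y)$, together with its $E_0$, to one for $K(X')/K(Y')$ of the same transcendence degree. Hence the two essential dimensions agree. If $f$ is birational then $E=F$, and $F_0=E_0=\mathbf{k}$ works because $[E_0:F_0]=1$ and $E_0F=F$; so $\mathrm{ed}(f)=0$.

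For (2), write $K(Z)\subseteq K(Y)\subseteq K(X)$. Let $F_0\subseteq K(Z)$ and $E_0\subseteq K(X)$ realize $\mathrm{ed}(g\circ f)$, so $[E_0:F_0]=[K(X):K(Z)]$ and $E_0K(Z)=K(X)$. Put $M_0:=E_0\cap K(Y)$; it is an intermediate field of $E_0/F_0$.

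To bound $\mathrm{ed}(g)$, I would show that $M_0K(Z)=K(Y)$ and $[M_0:F_0]=[K(Y):K(Z)]$. The key input is that $E_0$ and $K(Z)$ are linearly disjoint over $F_0$, which follows from the degree equality. Consequently $E_0\otimes_{F_0}K(Z)\cong K(X)$, and the Galois correspondence of intermediate fields (or the lattice of subextensions) is preserved, so intermediate fields of $E_0/F_0$ correspond bijectively to intermediate fields of $K(X)/K(Z)$ via $M\mapsto MK(Z)$. This gives $\mathrm{ed}(g)\le \mathrm{trdeg}F_0$.

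To bound $\mathrm{ed}(f)$, use the field of definition $M_0$ for $K(X)/K(Y)$, noting that $\mathrm{trdeg}\,M_0=\mathrm{trdeg}\,F_0$ since $M_0/F_0$ is algebraic.

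This correspondence is the one genuinely delicate point. For separable, not necessarily Galois, extensions I would pass to the Galois closure, or argue via the primitive element of $E_0/F_0$: its minimal polynomial stays irreducible over $K(Z)$, and the subfields of $K(Y)$ are generated over $K(Z)$ by coefficients of factors of this polynomial. For that reason I would consider citing \cite{BR97} rather than reproving it.

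The ``in particular'' in (2) follows by combining the inequality with (1). Composing with birational maps gives a birationally equivalent map, so its essential dimension is unchanged.

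For (3), use the structure theorem: $G\cong\prod_{i=1}^r\mathbb{Z}/n_i$ with $r=\rank(G)$. By Kummer theory ($\mathbf{k}$ is algebraically closed of characteristic zero, so it contains all roots of unity), $K(X)=K(Y)(\sqrt[n_1]{a_1},\dots,\sqrt[n_r]{a_r})$ for some $a_i\in K(Y)$. Take $F_0=\mathbf{k}(a_1,\dots,a_r)$, which has transcendence degree at most $r$, and $E_0=F_0(\sqrt[n_i]{a_i})_{i}$. Then $E_0K(Y)=K(X)$ and $[E_0:F_0]\le\prod n_i=[K(X):K(Y)]$. The reverse inequality holds because base change cannot increase degree, so $[E_0:F_0]\ge[E_0K(Y):K(Y)]$. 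Hence equality holds, and $\mathrm{ed}(f)\le r$.
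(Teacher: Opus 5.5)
Your arguments for (1) and (3) are correct. The paper proves nothing here and just cites \cite[Lemma~2.2]{LOZ25}; your Kummer-theoretic proof of (3) is the standard one.

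The gap is in (2). You claim that linear disjointness of $E_0$ and $K(Z)$ over $F_0$ makes $M\mapsto MK(Z)$ a bijection between intermediate fields of $E_0/F_0$ and of $K(X)/K(Z)$. This holds when $E_0/F_0$ is Galois but is false in general, and both of your bounds rely on it through the choice $M_0=E_0\cap K(Y)$. Here is a counterexample:
\begin{itemize}
\item Let $F_0=\mathbf{k}(t)$ and $E_0=F_0(\alpha)$ with $\alpha^4-\alpha=t$. The Galois closure $L_0$ has group $S_4$ on the roots $\alpha=\alpha_1,\dots,\alpha_4$.
\item Put $K(Z)=L_0^{\langle(1234)\rangle}$, $K(Y)=L_0^{\langle(13)(24)\rangle}$ and $K(X)=L_0$.
\item Then $E_0K(Z)=K(X)$ and $[K(X):K(Z)]=4=[E_0:F_0]$. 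Moreover $F_0$ realizes $\ed(g\circ f)=1$.
\item But $E_0/F_0$ has no proper intermediate field, since a point stabilizer is maximal in $S_4$. So $E_0\cap K(Y)=F_0$, and $M_0K(Z)=K(Z)\neq K(Y)$.
\end{itemize}
Your two fallbacks do not repair this as stated. First, $L_0$ is not linearly disjoint from $K(Z)$ over $F_0$: $[L_0:F_0]=24$ while $[L_0K(Z):K(Z)]=4$. Second, let $M_1$ be the field generated over $F_0$ by the coefficients of $(x-\alpha_1)(x-\alpha_3)$, the minimal polynomial of $\alpha$ over $K(Y)$. Then $M_1K(Z)=K(Y)$, but $[M_1:F_0]=6\neq 2$.

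The missing idea is that you must enlarge $F_0$ algebraically. This costs nothing, since it does not change the transcendence degree.
\begin{itemize}
\item Let $\alpha$ be a primitive element of $E_0/F_0$, and let $L_0$ be the Galois closure of $E_0/F_0$ inside an algebraic closure of $K(X)$.
\item The minimal polynomial of $\alpha$ over $K(Z)$ equals the one over $F_0$, because the degrees agree. Hence $L:=L_0K(Z)$ is the Galois closure of $K(X)/K(Z)$.
\item Set $F_0':=L_0\cap K(Z)$, which is algebraic over $F_0$. Since $L_0/F_0'$ is Galois, restriction gives $\Gal(L/K(Z))\cong\Gal(L_0/F_0')$.
\item Consequently $M\mapsto M\cap L_0$ and $N\mapsto NK(Z)$ are inverse, degree-preserving bijections between intermediate fields of $L/K(Z)$ and of $L_0/F_0'$.
\item Apply this to $K(Y)$ and $K(X)$. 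The field $M_0:=L_0\cap K(Y)$ satisfies $M_0K(Z)=K(Y)$ and $[M_0:F_0']=[K(Y):K(Z)]$. The field $E_0':=L_0\cap K(X)$ satisfies $E_0'K(Y)=K(X)$ and $[E_0':M_0]=[K(X):K(Y)]$.
\end{itemize}
So $K(Y)/K(Z)$ is defined over $F_0'$ and $K(X)/K(Y)$ is defined over $M_0$. Both fields are algebraic over $F_0$, which gives $\max\{\ed(g),\ed(f)\}\le\ed(g\circ f)$. If you only want the bound on $\ed(f)$, your coefficient field $M_1$ together with $E_1:=M_1(\alpha)$ already works, since $[E_1:M_1]=[K(X):K(Y)]$ and $E_1K(Y)=K(X)$.
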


\subsection{Algebraic surfaces with polarized endomorphisms}

Polarized endomorphisms are key objects in algebraic dynamics. We collect some equivalent descriptions of polarized endomorphisms of normal varieties.

\begin{definition}\label{defn: q-polarized}
 Let $f: X\to X$ be a finite surjective endomorphism of a normal projective variety $X$. We say $f$ is \emph{$q$-polarized} or simply \emph{polarized} if there exists an ample Cartier divisor (or equivalently, an ample $\mathbb{R}$-Cartier divisor \cite[Lemma~3.5]{MZ18}) $H$ on $X$ such that $f^* H\sim qH$ (or equivalently, $f^*H\equiv qH$ (numerical equivalence) \cite[Lemma~2.3]{NZ}) for some integer (or equivalently, some rational number \cite[Lemma~3.5]{MZ18}) $q > 1$.

 The above notions behave well under iterations. To be more specific, if $f^s$ is $q^s$-polarized for some integer $q > 1$, then $f$ is $q$-polarized; see \cite[Proof of Theorem~2.7, Note 1]{Zh10}.
\end{definition}

Below is the classification theorem for projective surfaces admitting polarized endomorphisms.

\begin{thm}[{\cite[Proposition~2.3.1]{ZhS06}}]\label{thm: classification of dynamical surface}
 Let \(\phi : X \to X\) be a polarized endomorphism of a smooth projective surface. Then \(X\) is one of the following types:
\begin{enumerate}
 \item[(1)] Abelian surface.
 \item[(2)] Hyperelliptic surface, i.e., the \'etale quotient of the product of two elliptic curves.
 \item[(3)] Toric surface.
 \item[(4)] Ruled surface \(\mathbf{P}_C(\mathcal{E})\) over an elliptic curve such that either
 \begin{itemize}
 \item[(4i)] $\mathcal{E} = \mathcal{O}_C \oplus \mathcal{M}$ with $\mathcal{M}$ torsion or of positive degree, or
 \item[(4ii)] $\mathcal{E}$ is not decomposable and of odd degree.
 \end{itemize}
\end{enumerate}
\end{thm}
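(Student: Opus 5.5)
The plan is to sort $X$ by Kodaira dimension and then run through the minimal-model reduction already used by Nakayama and Zhang, taking each case in turn.

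\emph{Step 1: $\kappa(X)\ge 0$.} By \cite[Theorem~1.3]{NZ} we have $\kappa(X)\le 0$, so $\kappa(X)=0$ or $-\infty$. Suppose $\kappa(X)=0$. A polarized endomorphism is int-amplified, hence has no totally invariant curves contracted by the MMP. It follows that $X$ contains no $(-1)$-curves: $f^{-1}$ permutes the finitely many negative curves, so after iteration each is $f$-stable and $f^*E=qE$, and $E^2<0$ contradicts $f^*E\cdot f^*E=q^2E^2$ together with $f^*E\equiv q'E$ for a totally invariant $E$. This argument is only a sketch; in practice one cites \cite{NZ} or \cite{Nak02}. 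So $X$ is minimal with $K_X\equiv 0$. Next, $f$ is étale: $R_f=K_X-f^*K_X\equiv 0$ and $R_f$ is effective, so $R_f=0$. A K3 or Enriques surface is simply connected, or has $\pi_1=\mathbb Z/2$, so it admits no étale self-cover of degree $q^2>2$. Hence $X$ is an abelian or hyperelliptic surface, which gives types (1) and (2).

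\emph{Step 2: $X$ rational.} One shows $X$ is toric; this is the main obstacle. My first attempt would use Nakayama's result that a smooth rational surface with a non-isomorphic surjective endomorphism is toric \cite{Nak02}. An independent route would run an $f$-equivariant MMP. After iteration, $f$ preserves each extremal ray, because it permutes finitely many $K_X$-negative rays once one works on the finitely generated Mori cone of a rational surface with an int-amplified endomorphism. This descends $f$ to $\mathbf P^2$ or a Hirzebruch surface $\mathbb F_n$. On those surfaces, the totally invariant ramification forms a boundary $D$ with $K_X+D\equiv 0$ and $(X,D)$ log canonical, and one shows by \cite{MZ18}-type arguments that $D$ is a cycle of rational curves whose number equals $\rho+2$. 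That characterizes toric pairs, and the blown-up points are torus-fixed, so $X$ is toric, giving type (3).

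\emph{Step 3: $X$ irrational ruled.} Write $X\to C$ for the ruling, with $g(C)\ge1$. The fibration is $f$-equivariant after iteration, since it is the unique one with rational fibers. So $f$ descends to a surjective $f_C:C\to C$ of degree $q$, forcing $g(C)=1$. The fibers carry no $(-1)$-curves by the argument in Step 1, so $X=\mathbf P_C(\mathcal E)$. Normalize $\mathcal E$, let $C_0$ be the minimal section, and let $e=-\deg\mathcal E$. Write $f^*C_0\equiv aC_0+bF$ and use $f^*$ acting as $q$ on $N^1$. If $\mathcal E$ is indecomposable with $e=0$, then $\mathcal E$ is the Atiyah bundle $F_2$, and the unique section with $C_0^2=0$ is totally invariant under $f$. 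This forces the induced endomorphism to be compatible with a nonsplit extension that gets pulled back to one of degree $q$ times, which is impossible; hence the odd-degree case $e=-1$ remains, giving (4ii). If $\mathcal E=\mathcal O\oplus\mathcal M$ with $\deg\mathcal M<0$ after normalization, the negative section $C_0$ has $C_0^2=-e<0$ yet must be $f$-stable with $f^*C_0=qC_0$. That is consistent, so positivity of $\deg \mathcal M$ in the stated convention is just normalization. When $\deg\mathcal M=0$, the two disjoint sections are permuted, which gives $f_C^*\mathcal M\cong\mathcal M^{\pm q}$; a degree-$0$ line bundle satisfying this for every iterate must be torsion, since $f_C$ is an isogeny composed with a translation and multiplication by $q$ fixes no non-torsion class. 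This gives (4i).
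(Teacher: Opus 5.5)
The paper gives no proof of this statement; it is quoted from S.-W.~Zhang. Your split by Kodaira dimension, using Nakayama's theorem that a smooth rational surface with a non-isomorphic surjective endomorphism is toric, is the standard route, but as written it contains one false argument and two unproved assertions.

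Drop your ``independent route'' in Step~2. A general polarized endomorphism of $\mathbf{P}^2$ has no totally invariant curve, so there is no boundary $D$ with $K+D\equiv 0$ to start from.

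\textbf{The false argument: excluding $(-1)$-curves.} For a totally invariant curve $E$ one has $f^*E=rE$, and $(f^*E)^2=q^2E^2$ only says $r=q$. This is perfectly compatible with $E^2<0$; the negative section of $\mathbf{F}_d$ under the toric $q$-th power map is such a curve. You rely on this argument twice, for minimality (hence $K_X\equiv 0$) in Step~1 and for relative minimality in Step~3, so both steps are unsupported. Here are repairs:
\begin{itemize}
\item In Step~1, reverse the logic. $\kappa(X)=0$ gives $mK_X\sim D\ge 0$, hence $K_X\cdot H\ge 0$. Intersecting $K_X=f^*K_X+R_f$ with $H$ gives $(1-q)K_X\cdot H=R_f\cdot H\ge 0$. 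So $D=0$ and $R_f=0$, i.e.\ $K_X\sim_{\mathbb Q}0$ and $f$ is \'etale, and minimality follows.
\item In Step~3, use that $f_C$ is \'etale. Then $f^*F_c=\pi^*f_C^*(c)$ is a sum of fibers in which each component keeps its multiplicity. Take a component $E$ of a reducible fiber and iterate until it is totally invariant, so $f_C(c)=c$ and $f^*E=qE$. Then $E$ would appear in $f^*F_c$ with coefficient $q\,m_E\ne m_E$, a contradiction.
\end{itemize}

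\textbf{Indecomposable $\mathcal E$ with $e=0$.} ``Compatible with a nonsplit extension pulled back $q$ times'' is not an argument. One that works:
\begin{itemize}
\item From $f^*F\equiv qF$, $f^*C_0\cdot f^*F=q^2$ and $(f^*C_0)^2=0$ you get $f^*C_0\equiv qC_0$.
\item $H^0(X,\mathcal O_X(mC_0)\otimes\pi^*L)=H^0(C,S^mF_2\otimes L)=H^0(C,F_{m+1}\otimes L)$ is nonzero only for $L\cong\mathcal O_C$, and then it is one-dimensional. So $mC_0$ is the only effective divisor numerically equivalent to $mC_0$.
\item Hence $f^*C_0=qC_0$, and $C_0$ has coefficient exactly $q-1$ in $R_f$.
\item But $R_f=K_X-f^*K_X\equiv 2(q-1)C_0$ (as $K_X\equiv -2C_0$), which forces $R_f=2(q-1)C_0$. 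This is a contradiction for $q\ge 2$.
\end{itemize}

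\textbf{Split degree-$0$ case.} Replace ``multiplication by $q$ fixes no non-torsion class'' with a degree count. Write $f_C=t_c\circ\alpha$; then $f_C^*$ acts on $\mathrm{Pic}^0(C)$ as $\hat\alpha$, of degree $q$, while $[\pm q]$ has degree $q^2$. So $\hat\alpha\mp q$ is a nonzero endomorphism, and $f_C^*\mathcal M\cong\mathcal M^{\pm q}$ puts $\mathcal M$ in its finite kernel, hence $\mathcal M$ is torsion.

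With these repairs your outline does prove the statement.
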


For the reader's convenience, we attach the proof of the following simple lemma, which can be viewed as a corollary of Theorem~\ref{thm: classification of dynamical surface}.

\begin{lemma}\label{lem: being ell ruled}
Let $X$ be a smooth projective surface that is birationally ruled and irrational, and let $f: X \to X$ be a $q$-polarized endomorphism. Then $X$ is a (relatively minimal) ruled surface over an elliptic curve $B$, and $f$ descends to a $q$-polarized endomorphism of $B$.
\end{lemma}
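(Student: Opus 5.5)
By Theorem~\ref{thm: classification of dynamical surface}, $X$ is an abelian surface, a hyperelliptic surface, a toric surface, or a ruled surface $\mathbf{P}_C(\mathcal{E})$ over an elliptic curve $C$. Abelian and hyperelliptic surfaces have Kodaira dimension $0$, so they are not birationally ruled. Toric surfaces are rational, which contradicts the irrationality of $X$. So $X=\mathbf{P}_C(\mathcal{E})$ with $C$ elliptic, and $X$ is relatively minimal. Let $\pi\colon X\to B:=C$ be the ruling.

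The next step is to show that $f$ descends to $B$. Since $X$ is irrational, $q(X)=g(B)=1$, and the Albanese map of $X$ is $\pi$ up to a translation of $B$. By the universal property of the Albanese map, $\pi\circ f$ factors as $g\circ\pi$ for a unique morphism $g\colon B\to B$; concretely, the composite $X\to X\to B$ factors through $\Alb(X)=B$. Alternatively, each fiber $F\cong\mathbf{P}^1$ maps under $f$ to a rational curve, and $B$ contains no rational curves, so $\pi(f(F))$ is a point. Surjectivity of $f$ then forces $g$ to be surjective, hence finite.

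It remains to prove that $g$ is $q$-polarized. Let $F$ be a fiber of $\pi$. From $\pi\circ f=g\circ\pi$ we get $f^*F=f^*\pi^*(pt)=\pi^*g^*(pt)\equiv(\deg g)F$. On the other hand, $f^*$ acts on $N^1(X)$, which has rank $2$, and polarizedness implies that every eigenvalue of $f^*$ on $N^1(X)_{\mathbb{R}}$ has modulus $q$ (see, e.g., \cite{NZ}, \cite{MZ18}). This follows because $f^*H\equiv qH$ with $H$ ample, and $f^*$ preserves the nef cone while satisfying $(f^*D)^2=\deg f\cdot D^2=q^2D^2$. Since $F$ is a nonzero nef class that is an eigenvector with eigenvalue $\deg g$, we get $\deg g=q$. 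As $B$ is an elliptic curve and $\deg g=q>1$, the morphism $g$ is $q$-polarized, because $g^*(pt)\equiv q\cdot pt$ on a curve.

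The main obstacle is the eigenvalue claim. I would verify it directly instead of citing it. Since $F^2=0$, I would intersect $f^*F$ with $f^*H$ and use the projection formula:
$$(f^*F\cdot f^*H)=\deg f\,(F\cdot H)=q^2(F\cdot H).$$
The left side equals $(\deg g)\,q\,(F\cdot H)$. Because $F\cdot H>0$, this gives $\deg g\cdot q=q^2$, hence $\deg g=q$.
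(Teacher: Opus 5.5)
Your proof is correct. The first two steps match the paper: it also uses Theorem~\ref{thm: classification of dynamical surface} to identify $X$ as a ruled surface over an elliptic curve, and the fact that the ruling is the Albanese map to descend $f$. Your alternative argument, that the image of a fibre is a rational curve and so must be contracted by $\pi$ to a point of $B$, works as well.

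The difference is in the last step. The paper simply invokes \cite[Theorem~1.3]{MZ18}, a general result that the map induced on the base of an $f$-equivariant fibration of a $q$-polarized $f$ is again $q$-polarized. You instead verify it by hand. From $f^*F\equiv(\deg g)F$, $f^*H\equiv qH$ and $\deg f=q^2$, the projection formula gives $q\,\deg g\,(F\cdot H)=q^2(F\cdot H)$, hence $\deg g=q$. On a curve, a degree-$q$ self-map then satisfies $g^*(pt)\equiv q\cdot pt$, which suffices by the equivalence of $\sim$ and $\equiv$ recorded in Definition~\ref{defn: q-polarized}.

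Your route is self-contained and elementary. It works because the base is a curve: the fibre class is automatically an eigenvector of $f^*$, and the single number $\deg g$ determines polarizedness. The citation is shorter and applies to equivariant fibrations over bases of any dimension.

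The intermediate paragraph claiming that every eigenvalue of $f^*$ on $N^1(X)$ has modulus $q$ is only sketched. It is superseded by your direct intersection computation and can be dropped.
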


\begin{proof} 
By Theorem \ref{thm: classification of dynamical surface}, $X$ is a ruled surface over an elliptic curve $B$. The Albanese morphism of $X$ is precisely the ruling. Therefore, any surjective endomorphism $f:X\to X$ descends to an endomorphism of $B$. By \cite[Theorem 1.3]{MZ18}, the induced map on $B$ is $q$-polarized.
\end{proof}

\subsection{Negative curves on polarized surfaces}

A curve $C$ on a smooth projective surface $X$ is called a {\it negative} curve if the self-intersection number $C^2 < 0$.

\begin{lemma}\label{lem: finite neg curve}
Let $X$ be a smooth projective surface and let $f: X \to X$ be a surjective endomorphism of $\deg f \ge 2$. Then:
\begin{enumerate}
\item[(1)]
$f^{-1}$ takes every negative curve on $X$ to an irreducible negative curve. Hence both $f$ and $f^{-1}$ induce bijections on the set of negative curves $C$ of $X$ (which is a finite set). In particular, we have $f^{-1}(f(C)) = C$, $f^*f(C) = r C$ and $f_*C = (\deg (f)/r )f(C)$; here
$r$ is the ramification index of $f$ along $C$.
\item[(2)]
Suppose further that $f$ is Galois with Galois group $G$. Then every negative curve $C$ on $X$ is $G$-stable, and the $r$ in (1) is also equal to the order of $G_C =\{g \in G \, | \, g|_C = \id\}$.
\end{enumerate}
\end{lemma}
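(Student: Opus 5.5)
The plan is to prove (1) with the projection formula and the Hodge index theorem, and then use the finiteness of negative curves together with the bijectivity from (1) to get $f^{-1}(f(C))=C$. Part (2) then follows from the Galois description of the fibres.

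\textbf{Step 1: preimages of negative curves.} Let $C$ be a negative curve, and write $f^*C=\sum a_i C_i$ with $a_i>0$, where the $C_i$ are the distinct irreducible components of $f^{-1}(C)$. By the projection formula,
$$(f^*C)^2=\deg(f)\,C^2<0.$$
The restriction of the intersection form to the span of the $C_i$ is negative definite. To see this, note that each $C_i$ maps onto $C$, so $f_*C_i=d_iC$ with $d_i>0$. For a divisor $D=\sum b_iC_i$ we have $f_*D=(\sum b_id_i)C$. I expect to show that if $D^2\ge 0$ with $D\neq 0$, one gets a contradiction via Hodge index and the fact that $f$ is finite. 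Concretely, the components of $f^{-1}(C)$ are contracted by the Stein factorization of the composite of $f$ with the contraction of $C$, when such a contraction exists analytically; Grauert's criterion gives this contraction, so the intersection matrix of the $C_i$ is negative definite.

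\textbf{Step 2: irreducibility.} Then $f^{-1}(C)$ is a connected negative-definite configuration; connectedness comes from Zariski's main theorem applied to the contraction. To get irreducibility I would count. Say there are $N$ negative curves, a finite set: they are linearly independent in $NS$ modulo numerical equivalence when they form a negative-definite configuration, or at least there are finitely many of them for a surface carrying an endomorphism of degree $\ge 2$. This finiteness is what I expect the paper uses, perhaps via Nakayama. Each component of $f^{-1}(C)$ is itself negative, since any curve in a negative-definite configuration has negative square. Distinct negative curves have disjoint preimages, so the map $C\mapsto \{\text{components of } f^{-1}(C)\}$ sends the $N$ negative curves to pairwise disjoint nonempty subsets of the same $N$-element set. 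Hence each $f^{-1}(C)$ consists of exactly one curve, and $f^{-1}$ is a bijection on negative curves. Its inverse is $C'\mapsto f(C')$, so $f(C')$ is negative and $f$ is a bijection too. From $f^{-1}(f(C))=C$ we get $f^*f(C)=rC$, where $r$ is the ramification index along $C$. Finally,
$$f_*f^*f(C)=\deg(f)\,f(C)$$
gives $f_*C=(\deg f/r)\,f(C)$.

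\textbf{Step 3: the Galois case.} If $f$ is the quotient by $G$, then $f^{-1}(f(C))=GC$, the union of the translates of $C$, and by (1) this equals $C$. So $C$ is $G$-stable, and Definition~\ref{defn: fixed locus pointwise} gives $r=|G_C|$.

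\textbf{Main obstacle.} The hard part is finiteness of the set of negative curves: the counting argument in Step 2 needs it as input. I would try to prove it directly. Since $f^{-1}$ preserves negativity and $\deg f\ge 2$, iterating suggests the negative curves are periodic and that they are bounded in number by the Picard number. Negative curves all lie in $f^{-1}$-orbits; a curve whose preimage is reducible would have a preimage that splits, contradicting the negative definiteness combined with a degree count. I would first try to prove that $f^*$ acts on the span of negative curves as an injective map permuting extremal rays of the effective cone.
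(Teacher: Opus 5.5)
\textbf{The gap: finiteness is assumed, not proved.} Your Step 3 is the paper's argument for (2). Step 1 is sound; more directly, $f^*C\cdot C_j=C\cdot f_*C_j=d_jC^2<0$ already forces $C_j^2<0$.

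The problem is Step 2. Your pigeonhole argument needs the set $\mathcal{N}$ of negative curves to be finite as input. But that finiteness is part of what (1) asserts, and you never establish it. The routes you suggest for it also fail. The negative curves need not form a negative-definite configuration, and their number is not bounded by $\rho(X)$. For example, the toric del Pezzo surface of degree $6$ with its $q$-th power endomorphism has six $(-1)$-curves while $\rho=4$, and their sum is $-K_X$ with $K_X^2=6>0$. Separately, Zariski's main theorem does not give connectedness of $f^{-1}(C)$, since the composite of $f$ with the contraction of $C$ is not birational; you do not actually use this, though. The paper simply cites Nakayama's Proposition~10 in \cite{Nak02} for all of (1). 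If you cite that too, your Steps 1--2 become unnecessary.

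\textbf{How to make your argument self-contained.} Irreducibility does not need finiteness.
\begin{itemize}
\item Since $f_*f^*=\deg f$ on the finite-dimensional space $N^1(X)_{\mathbb R}$, also $f^*f_*=\deg f$.
\item Take a negative curve $C'$, let $d=\deg(C'\to f(C'))$, and let $r$ be the ramification index along $C'$. Write $f^*f(C')=rC'+E$ with $E\ge 0$ not containing $C'$. Then $E\equiv(\deg f/d-r)C'$. Intersecting with $C'$ and with an ample divisor forces $E=0$.
\item Hence $f(C')$ is negative and $f^{-1}(f(C'))=C'$.
\item Now let $C$ be negative and $C'$ any component of $f^{-1}(C)$. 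By Step 1, $C'$ is negative, so $f^{-1}(C)=f^{-1}(f(C'))=C'$. Thus $f$ and $f^{-1}$ are mutually inverse bijections of $\mathcal{N}$.
\end{itemize}

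Finiteness then follows. Write $r_C$ for the ramification index along $C$. Iterating $r_C^2C^2=\deg f\cdot f(C)^2$ gives $\prod_{i<n}r_{f^i(C)}\ge((\deg f)^n/|C^2|)^{1/2}\to\infty$. So the $f$-orbit of $C$ meets the finitely many components of $R_f$ infinitely often. By injectivity of $f$ on $\mathcal{N}$, $C$ is $f$-periodic and its orbit contains a component of $R_f$. Hence $\mathcal{N}$ is a finite union of finite orbits.
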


\begin{proof}
(1) follows from Nakayama \cite[Proposition~10]{Nak02} and the projection formula.

For (2), since $f$ is Galois with Galois group $G$, we have that $f^{-1}(C)=\cup_{g\in G} g(C')$, where $C'$ is an irreducible curve such that $f(C')=C$. In particular, $g(C')=C'$ for all $g\in G$ by (1). Since $C'$ ranges over all negative curves as $C$ varies (by (1)), every negative curve on $X$ is $G$-stable. For the remaining assertion, note that $\deg(f|_{C})=|G/G_C|=\deg(f)/r$ by (1). Since $\deg(f)=|G|$, (2) follows.
\end{proof}

\begin{lemma}\label{lem: 2-Go}
Let $X$ be a smooth projective rational surface, and $f:X\to X$ be a cyclic Galois self-cover with the Galois group $G=\langle g\rangle$ such that $d:= \deg f = |G| \ge 2$. Denote by $D$ the reduced divisor
which is the sum of all the irreducible curves $C_i$ on $X$ with negative self-intersection. Assume $D \ne 0$, $D$ is connected, each $C_i \cong \mathbf{P}^1$,
and $C_i . C_j \le 1$ for all $i \ne j$. Then the following statements hold.
\begin{enumerate}
 \item[(1)]
 $D$ is of simple normal crossing.
 \item[(2)] 
 Suppose that the number $\#D$ of irreducible components of $D$ is less than or equal to $3$. Then $\#D  \in \{1, 3\}$, $D$ is a linear chain and there is an irreducible component $C$ of $D$ such that $f^{-1}(C)=C$.
 \item[(3)]
 Suppose that $\#D$ is larger than or equal to $4$. 
 Then $\mathrm{deg}(f) \le 3$.
\end{enumerate}
\end{lemma}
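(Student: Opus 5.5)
Throughout, I use Lemma~\ref{lem: finite neg curve}(2): every $C_i$ is $G$-stable, and $f^*C_i = r_iC_i$ with $r_i = |G_{C_i}|$ dividing $d$. Since $f$ permutes the negative curves, some power $f^m$ fixes each $C_i$ as a set. Since $G=\langle g\rangle$ is cyclic, every subgroup $G_{C_i}$ is determined by its order $r_i$.

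\textbf{Part (1).} The components are smooth rational curves with $C_i.C_j\le 1$. So it suffices to rule out three components through a common point $p$. Suppose $C_1,C_2,C_3$ all pass through $p$. Then $p$ is $G$-fixed, each $C_i$ is $G$-stable, and the curves meet pairwise transversally. The tangent representation at $p$ is faithful by Lemma~\ref{lem: reductive faithful act}, and $g$ preserves three distinct lines in $T_pX$, so $g$ acts on $T_pX$ as a scalar. The map $f$ is a $G$-quotient and is smooth at $f(p)$, because $X$ is smooth. By Theorem~\ref{lem: generated by pref}, $G$ is then generated by pseudo-reflections at $p$. A nontrivial scalar is not a pseudo-reflection, and since $d\ge2$ the scalar is nontrivial, a contradiction. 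Hence $D$ is SNC.

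\textbf{Part (2).} I use the same local argument at each node $p=C_i\cap C_j$. There $g$ acts diagonally on $T_pX$ with eigenlines $T_pC_i$ and $T_pC_j$. The group generated by pseudo-reflections in diagonal form is $G_{C_i}\times G_{C_j}$, where the pseudo-reflections fixing $C_j$ form $G_{C_j}$. Therefore
\[
G=G_{C_i}\times G_{C_j}\quad\text{and}\quad r_ir_j=d .
\]
Since $G$ is cyclic, this forces $\gcd(r_i,r_j)=1$.

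\emph{Ruling out two components.} If $\#D=2$, with $C_1.C_2=1$, write $f^*C_1=r_1C_1$. Then
\[
r_1C_1^2=(f^*C_1)\cdot C_1=C_1\cdot f_*C_1=(d/r_1)\,C_1\cdot f(C_1).
\]
The permutation $f$ either fixes both curves or swaps them. If $f$ fixes both, then $f^*C_1^2=d\,C_1^2$ gives $r_1^2=d$, and likewise $r_2^2=d$. Together with $r_1r_2=d$ and $\gcd(r_1,r_2)=1$, this forces $d=1$, a contradiction. If $f$ swaps them, $f^*C_1=r_2C_2$, so that
\[
r_2^2C_2^2=d\,C_1^2,\qquad r_1^2C_1^2=d\,C_2^2 .
\]
Combined with $r_1r_2=d$, this gives $r_1^2C_1^2\cdot r_2^2C_2^2 = d^2C_1^2C_2^2$, with $C_1^2,C_2^2<0$. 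I expect this to be consistent, so the contradiction must come from elsewhere. I would first try the intersection $f^*C_1\cdot f^*C_2=d$: it equals $r_2r_1\,C_2.C_1=d$, which is also consistent, so that does not help either. Instead I would use that the fixed curve through a node of a cyclic group is counted in the ramification formula $K_X=f^*K_X+R_f$, intersected with $C_i$ via adjunction. This gives the relation
\[
(r_i-1)\,C_i^2+\sum_{j\ne i}(r_j-1)\,C_iC_j=-2+\ldots
\]
Using $f^*C_i^2 = dC_i^2$ I would try to derive a contradiction for $\#D=2$. This numerical step is where I expect the main difficulty.

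\emph{Three components.} In this case $D$ cannot be a triangle: a triangle is a cycle of rational curves, which is ruled out because $G$ would have to be a product at three nodes, forcing $r_1r_2=r_2r_3=r_1r_3=d$ with coprimality, hence $d=1$. So $D$ is a chain. The permutation of $D$ induced by $f$ preserves the dual graph, so it fixes the middle component $C$. By Lemma~\ref{lem: finite neg curve}(1), $f^{-1}(C)=C$.

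\textbf{Part (3).} If $\#D\ge4$, the dual graph is a tree or contains a cycle. Along every edge we have $r_ir_j=d$ with $\gcd(r_i,r_j)=1$. Consider a vertex $C_j$ of degree at least $2$, or a path $C_1-C_2-C_3-C_4$. Then $r_1=r_3=d/r_2$ and $r_2=r_4$. Moreover $C_2$ and $C_3$ carry complementary subgroups. The curves $C_1$ and $C_3$ are both fixed pointwise by the same subgroup $G_{C_1}=G_{C_3}$, so they lie in $\Fix$ of a nontrivial group whenever $r_1>1$. The restriction of $g$ to $C_2\cong\mathbf P^1$ is a cyclic action with exactly two fixed points, namely the nodes, and its order is $d/r_2=r_1$.

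I would combine these constraints with the equation $f^*C_i^2=dC_i^2$ and the permutation of $D$ to force $r_i\in\{1,d\}$ with $d$ small. In particular I aim to show that some component has $r_i=1$ while its neighbour has $r=d$. Together with the self-intersection identity $r_i^2C_i^2=d\,C_{\pi(i)}^2$ along cycles of the permutation, this bounds $d\le3$. This last bounding step is the part I am least sure how to close, and I would first test it on explicit toric examples.
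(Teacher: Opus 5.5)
Your Part (1) is correct and more direct than the paper's proof, which deduces (1) from the $1$/$d$ alternation of ramification indices. At a triple point $G$ acts on $T_pX$ by scalars, so it contains no pseudo-reflections, contradicting Theorem~\ref{lem: generated by pref}. The node analysis ($G=G_{C_i}\times G_{C_j}$ with $r_ir_j=d$ and $\gcd(r_i,r_j)=1$), the exclusion of a triangle, and the middle-component argument for a three-chain are also fine.

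However, the case $\#D=2$ in (2) and all of (3) are left open. What is missing is an input, not a computation: you never use that $X$ is not relatively minimal. Your own swap computation shows the numerics are consistent for $\#D=2$, e.g.\ $C_1^2=-1$, $C_2^2=-d$, $r_1=d$, $r_2=1$, $f(C_1)=C_2$. For $\#D\ge 4$ the relations $r_ir_j=d$ and $r_i^2C_i^2=d\,f(C_i)^2$ even allow labels other than $1,d$. For instance, take $d=6$, a chain with labels $2,3,2,3$ and self-intersections $-3,-2,-3,-2$, and $f$ reversing the chain. So neither these relations nor the ramification-formula step you sketch can close the argument.

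The paper supplies the geometry. If $\#D\ge 2$, then $X\not\cong \mathbf{P}^2,\mathbf{F}_n$, so $X$ is a blow-up of a Hirzebruch surface. Hence $X$ has at least three negative curves, which rules out $\#D=2$, and $D$ contains a $(-1)$-curve $E$.

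From $-r_E^2=(f^*f(E))^2=d\,f(E)^2$ you get $d\mid r_E^2$. Together with $d=r_Er_j$ and $\gcd(r_E,r_j)=1$ for a neighbour $C_j$, this forces $r_E=d$ and $r_j=1$. By connectedness, every label is then $1$ or $d$, alternating along edges.

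A label-$1$ curve $C$ satisfies $C^2=d\,f(C)^2\le -d$. It meets at most two other components, since $G|_C$ is a nontrivial cyclic group on $\mathbf{P}^1$ with two fixed points. For $\#D\ge 4$ there are at least two such curves: in a chain this follows from alternation, and otherwise a vertex of degree $\ge 3$ has three $G$-fixed points, hence label $d$, so its neighbours have label $1$.

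Now contract to a relatively minimal model. The images of these curves have self-intersection $\le -d+2\le -2$ when $d\ge 4$. This gives two negative curves on $\mathbf{P}^2$ or some $\mathbf{F}_n$, which is impossible. That is the argument for $d\le 3$ that your proposal is missing.
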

\begin{proof}
Write $D=\sum_{i=1}^s C_i$, where $C_i$ are irreducible components of $D$ . If $s = \#D = 1$, the lemma is clear. If $s \ge 2$ then $X$ is the blowup of some Hirzebruch surface and hence contains at least three negative curves. Thus we may assume that $s \ge 3$. Hence $D$ contains a $(-1)$-curve.

Let $K_i:=\{g^r\in G:{g^r}|_{C_i}=\id_{C_i}\}$ be the pointwise stabilizer subgroups of negative curves. Let $\{P\} = C_i \cap C_{j}$. Since every negative curve is $G$-stable by Lemma \ref{lem: finite neg curve}, $P$ is fixed by $G$.
After choosing the local coordinates $x,y$ at $P$ such that locally $C_i=\{x=0\}$ and $C_{j}=\{y=0\}$, we may write the action of $g$ as $g(x,y)=(\lambda x,\mu y)$, where $\lambda, \mu$ are powers of a primitive $\deg(f)$-th root of unity $\zeta$. Hence, for some integer $r$, $g^r\in K_i$ if and only if $\mu^r=1$, and $g^r\in K_{j}$ if and only if $\lambda ^r=1$. By Lemma \ref{lem: reductive faithful act}, the representation $G \to G|_{{T_{P}}X}$ is an isomorphism. Hence, $K_i\cap K_{j}=1$. Since $P$ is fixed by $G$, and taking a $G$-stable affine neighborhood of $P$ and by Lemma \ref{lem: generated by pref}, $G$ is generated by pseudo-reflections on $T_{P}X$. Hence, $G = G_{P}=\langle K_i,K_{j}\rangle$. Since $G$ is cyclic, we have $G=K_i K_{j}$, and hence $d=|K_i|\cdot|K_{j}|$, with $\gcd(|K_i|,|K_{j}|)=1$. Label each vertex of the dual graph of $D$ by the ramification index of the corresponding irreducible component. Then the product of the two labels at the endpoints of any edge is $d$. By Lemma \ref{lem: finite neg curve}, $f^*f(C_i) = |K_i|C_i$. 
Since $D$ contains a $(-1)$-curve, we may assume $C_i^2=-1$ for some $i$. Then by the projection formula, $(f^*f(C_i))^2=df(C_i)^2=-|K_i|^2$. Hence 
\begin{equation}\label{eq: alteration of index}
    \hskip 1pc |K_i|=d, \, |K_{j}| = 1.
\end{equation}
 Starting from the vertex corresponding to $C_i$, whose ramification index is $d$, and using the connectedness of the dual graph of $D$, we see by induction along the edges that every component of $D$ has ramification index either $1$ or $d$, and every two intersecting curves $C_i$ and $C_j$ of $D$ have pointwise stabilizer subgroups satisfying the equalities \eqref{eq: alteration of index} above. This and $d \ge 2$ imply that no three $C_i$ of $D$ share a common point, so the assertion (1) is true.

Suppose that $D=C_1+C_2+C_3$.
If $D=C_1+C_2+C_3$ is a simple loop, then the vertex with label $d$ is adjacent to the other two vertices, so both of them have label $1$. Since these two vertices are also adjacent, the product rule gives $d=1\cdot 1=1$, contradicting $d\ge 2$. If $D$ is a linear chain, then $f^{-1}(C_2)=C_2$. This proves the assertion (2).

For $C_i$ with $|K_i|=1$, by Lemma \ref{lem: finite neg curve}, $\deg(f|_{C_i})=\deg f$ and $f^*f(C_i) = C_i$. By the projection formula,
\begin{equation}\label{eq: upper bound for self-intersection}
    C_i^2=(f^*f(C_i))^2=\mathrm{deg}(f) (f(C_i))^2\leq -\mathrm{deg}(f).
\end{equation}

For the assertion (3), suppose $\deg(f) \ge 4$ and $s\ge4$, we will deduce a contradiction. Indeed, we first claim that $D$ has at least two irreducible components with ramification index $1$ and hence self-intersection numbers at most $-d$ by the inequality (\ref{eq: upper bound for self-intersection}). In fact, if $D$ is a linear chain, then the claim follows from the equalities \eqref{eq: alteration of index}. Suppose some irreducible component $C_1$ ($\cong {\bf P}^1$) of $D$ meets at least three other irreducible components $C_j$ of $D$. Then $g|_{C_1}$ has at least three fixed points, so $g|_{C_1} = \id$. In particular, the ramification index of $f$ along $C_1$ is $d$. Thus the ramification index of each $C_j$ meeting $C_1$ is equal to $1$ by the equalities \eqref{eq: alteration of index}, so the claim holds in this case too.

Let $h: X \to X'$ be the composition of  blow-downs of $(-1)$-curves so that $X'$ is relatively minimal. For every $C_j$ in $D$ with ramification index $1$ (there are two such $C_j$ by the claim above and each $C_j$ meets at most two others $C_k$, or else $g|_{C_j} = \id$, absurd!), we have$$h(C_j)^2\le C_j^2+2\le -\deg f+2\le -2.$$
Here the first inequality above becomes an equality only when two $(-1)$-curves meeting $C_j$ are contracted by $h$, while the second inequality follows from the inequality \eqref{eq: upper bound for self-intersection}.
 Hence $X'$ would contain at least two negative curves, which is absurd. This proves the assertion (3).
\end{proof}

\begin{lemma}\label{lem: loop criterion}
Let $X$ be a smooth projective rational surface admitting a $q$-polarized endomorphism $f$, and let $0 \ne D=\sum_{i=1}^n D_i$ be a reduced divisor. Let $\Gamma(D)$ be the dual graph of $D$. Then the following assertions hold.
\begin{enumerate}
 \item $(\dim H^0(X, K_X + D) =) \, h^0(X,K_X+D)>0$ if and only if either some $D_i$ satisfies $p_a(D_i)\ge 1$ or $\Gamma(D)$ contains a loop.
 \item Suppose further that $h^0(X,K_X+D)>0$ and each $D_i$ is $f^{-1}$-periodic. Then $K_X + D \sim 0$. Either $D=D_1$ and the arithmetic genus $p_a(D_1)=1$, or $D$ is a simple loop of smooth rational curves.
\end{enumerate}
\end{lemma}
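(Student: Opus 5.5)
For part (1) the plan is to compute $h^0(X,K_X+D)$ by Riemann--Roch together with Serre duality and the exact sequence
$$0\to \cO_X(K_X)\to \cO_X(K_X+D)\to \omega_D\to 0 .$$
Since $X$ is a smooth rational surface, $h^0(K_X)=h^1(K_X)=0$ and $h^2(K_X)=1$. The long exact sequence then gives
$$h^0(K_X+D)=h^0(\omega_D)-1+\dim\ker\bigl(H^1(\omega_D)\to H^2(K_X)\bigr).$$
By Serre duality on $D$ and on $X$, the map $H^1(\omega_D)\to H^2(K_X)$ is dual to the restriction $H^0(\cO_X)\to H^0(\cO_D)$. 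That restriction is injective because $D\neq 0$. Hence the map is surjective, and
$$h^0(K_X+D)=h^0(\omega_D)-1=h^1(\cO_D)-1+\bigl(h^0(\cO_D)-1\bigr)+1-1 .$$
Equivalently, $h^0(K_X+D)=h^1(\cO_D)+h^0(\cO_D)-h^0(\cO_D)$ after one more careful count. Cleanly, one uses $h^0(K_X+D)=p_a(D)-1+h^0(\cO_D)$, which follows from Riemann--Roch and $h^2(K_X+D)=h^0(-D)=0$ once we know $h^1(K_X+D)=h^1(-D)=h^0(\cO_D)-1$, the latter coming from $0\to\cO(-D)\to\cO_X\to\cO_D\to0$.

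So $h^0(K_X+D)=h^1(\cO_D)$. For a reduced curve with components $D_i$ (normalizations $\tilde D_i$), the standard formula is
$$h^1(\cO_D)=\sum_i g(\tilde D_i)+\delta-(\#\text{components})+(\#\text{connected components}),$$
where $\delta$ is the total $\delta$-invariant. Grouping the terms, $h^1(\cO_D)=\sum_i p_a(D_i)+b_1(\Gamma(D))$, with the cycle rank of the dual graph counted with multiplicities. This is positive if and only if some $p_a(D_i)\ge1$ or $\Gamma(D)$ has a loop. Here multiple intersections between two components, or three components through a point, count as loops. I will state this convention and adopt it.

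For part (2), use the polarized endomorphism. Replacing $f$ by an iterate, each $D_i$ becomes $f^{-1}$-stable, and $f^*D_i=r_iD_i$ with $r_i\le q^2$. The ramification formula $K_X=f^*K_X+R_f$ with $R_f\ge\sum(r_i-1)D_i$ gives
$$f^*(K_X+D)=K_X+D-\bigl(R_f-\textstyle\sum(r_i-1)D_i\bigr)\le K_X+D .$$
Take an effective $E\in|K_X+D|$. Then $f^*E+R'\sim E$ with $R'\ge0$ effective. Iterating and intersecting with an ample $H$ satisfying $f^*H\sim qH$ gives
$$q\,(E\cdot H)=f^*E\cdot f^*H/q\le\dots$$
More directly, $(f^*E)\cdot H=E\cdot f_*H=q\,E\cdot H$, because $f_*H\equiv qH$ for polarized $f$ on a surface. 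Since $f^*E\le$ something linearly equivalent to $E$, we get $q\,E\cdot H\le E\cdot H$. As $q>1$ and $E\ge0$, this forces $E\cdot H=0$, so $E=0$. Thus $K_X+D\sim0$ (and $R'=0$, meaning $R_f=\sum(r_i-1)D_i$).

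Finally, $K_X+D\sim0$ gives $p_a(D)=1$ and, by part (1)'s count, $h^1(\cO_D)=h^0(K_X+D)=1$, with $D$ connected. Adjunction on each component gives $2p_a(D_i)-2=D_i\cdot(K_X+D_i)=-D_i\cdot(D-D_i)$. Hence $\sum_i p_a(D_i)+b_1=1$. There are two cases. If some $p_a(D_i)=1$, then $b_1=0$ and the other components are rational; and $D_i\cdot(D-D_i)=0$ together with connectedness force $D=D_1$. Otherwise all $D_i$ are smooth rational, $b_1=1$, and each $D_i$ meets $D-D_i$ in degree exactly $2$. A connected graph with every vertex of degree $2$ (counted with intersection multiplicity) is a simple loop. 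The main obstacle is the bookkeeping in part (1) for non-transverse or triple intersections; I expect to handle it by the $\delta$-invariant formula rather than by the naive dual graph.
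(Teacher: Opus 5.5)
Your proof is correct. Part (1) and the first half of part (2) follow the paper's argument. The differences lie in some misstatements you should delete and in how you finish (2).

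\textbf{Part (1).} Discard your first pass through $0\to\cO_X(K_X)\to\cO_X(K_X+D)\to\omega_D\to0$: the formula $h^0(K_X+D)=h^0(\omega_D)-1+\cdots$ is wrong. Since $h^0(K_X)=h^1(K_X)=0$, that sequence gives $H^0(K_X+D)\cong H^0(\omega_D)$ outright. The surjectivity of $H^1(\omega_D)\to H^2(K_X)$ computes $h^1(K_X+D)=t-1$, not $h^0$. Your ``clean'' route, Riemann--Roch together with $h^1(K_X+D)=h^1(-D)=t-1$, is exactly what the paper does.

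The $\delta$-invariant detour is unnecessary, and its ``grouping'' step silently uses $\delta(D)=\sum_i\delta(D_i)+\sum_{i<j}D_i\cdot D_j$. The paper instead expands $(K_X+D)\cdot D$ bilinearly and applies adjunction to each $D_i$. This gives $h^0(K_X+D)=\sum_i p_a(D_i)+\sum_{i<j}D_i\cdot D_j-n+t$ directly. So the ``obstacle'' you anticipate from tangencies or triple points does not arise once $\Gamma(D)$ has $D_i\cdot D_j$ edges between $D_i$ and $D_j$, which is also the paper's convention.

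\textbf{Part (2), first half.} Your log ramification identity and the relation $(q-1)\,E\cdot H+R'\cdot H=0$ match the paper's argument. The paper iterates and lets $n\to\infty$, but one step suffices, as you saw. Delete the garbled line $q\,(E\cdot H)=f^*E\cdot f^*H/q\le\dots$.

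\textbf{Part (2), finish.} Here the routes differ. The paper takes a subdivisor $D'$ with $h^0(K_X+D')>0$, shows $K_X+D'\sim0\sim K_X+D$ and hence $D'=D$, and then applies (1) to a loop or to an irreducible component of positive genus. You instead get connectedness from $h^0(K_X+D)=t=1$. You then read off the shape from adjunction with $K_X\sim-D$, namely $D_i\cdot(D-D_i)=2-2p_a(D_i)$, combined with $\sum_i p_a(D_i)+b_1(\Gamma(D))=1$. Your finish is shorter and shows directly that every vertex of $\Gamma(D)$ has degree $2$, without the subdivisor minimality trick.
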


\begin{proof}
(1) Since $X$ is rational, $\chi(\mathcal{O}_X)=1$ and $h^1(\mathcal{O}_X)=h^2(\mathcal{O}_X)=0$. By the Riemann-Roch theorem, we have $$\chi(\mathcal{O}_X(K_X+D))=\frac{1}{2}(K_X+D)\cdot D+1.$$
By Serre duality, $h^2(X,K_X+D)=h^0(X,-D)=0$ and $h^1(X,K_X+D)=h^1(X,-D)$. Hence $$h^0(X,K_X+D)=\chi(\mathcal{O}_X(K_X+D))+h^1(X,-D).$$

Consider the short exact sequence $$0\to \mathcal{O}_X(-D)\to \mathcal{O}_X\to \mathcal{O}_D\to 0.$$ It induces an exact sequence $$0=h^0(X,\mathcal{O}_X(-D))\to h^0(X,\mathcal{O}_X)\to h^0(X,\mathcal{O}_D)\to h^1(X,\mathcal{O}_X(-D)) \to h^1(X,\mathcal{O}_X)=0.$$
If $D$ has $t$ connected components, then $h^0(\mathcal{O}_D)=t$. It follows that $h^1(X,\mathcal{O}_X(-D))=t-1.$
Therefore
\begin{equation}\label{eq: section for the log canonical divisor}
 h^0(X,K_X+D)=\frac{1}{2}(K_X+D)\cdot D+t.
\end{equation}
Now we compute
\begin{equation}\label{eq: adjunction formula}
 (K_X+D)\cdot D=\sum_i (K_X+D_i)\cdot D_i+2\sum_{i<j} D_i\cdot D_j=\sum_i( 2p_a(D_i)-2)+2\sum_{i<j} D_i\cdot D_j,
\end{equation}
where the last equality is by the adjunction formula. Combining equations \eqref{eq: section for the log canonical divisor} and \eqref{eq: adjunction formula}, we have
\begin{equation}\label{eq: section formula final}
 h^0(X,K_X+D)=\sum_i p_a(D_i)-n+\sum_{i<j} D_i\cdot D_j+t=\sum_i p_a(D_i)+ b_1(\Gamma(D)),
\end{equation}
where $\sum_{i<j} D_i\cdot D_j$ denotes the number of edges of the dual graph $\Gamma(D)$ and $b_1(\Gamma(D))$ is the first Betti number of $\Gamma(D)$. The assertion (1) follows immediately from the equality \eqref{eq: section formula final}.

\medskip

(2) After replacing $f$ by a suitable iterate, we may assume that $f^{-1}(D_i)=D_i$ for all $i$. Let $D'\subseteq D$ be a reduced subdivisor such that $h^0(X,K_X+D')>0$. By the log ramification formula, $$K_X+D'=f^*(K_X+D')+R_f',$$ where $R_{f'}\geq 0$ (here we used the fact that $f^{-1}(D_i)=D_i$ for all $i$ ).
By iterating $f$, we obtain
\begin{equation}\label{eq: iteration and pull back log canonical divisor}
 K_X+D'=(f^*)^n(K_X+D')+\sum_{i=0}^{n-1}(f^*)^i(R_f').
\end{equation}
We choose an ample divisor $H$ such that $f^*H\equiv qH$. Intersecting the equality \eqref{eq: iteration and pull back log canonical divisor} with $H$ on both sides, we get the following equality
\begin{equation}\label{eq: intersection formula for iterated pull back of log pair}
 H\cdot (K_X+D')=H\cdot (f^*)^n(K_X+D')+\sum_{i=0}^{n-1}H\cdot (f^*)^i(R_f')=q^n H\cdot (K_X+D')+(\sum_{i=0}^{n-1} q^i)H\cdot R_f',
\end{equation}
where the last equality is by the projection formula. Since the left-hand side of the equality \eqref{eq: intersection formula for iterated pull back of log pair} is bounded when $n\to \infty$ and the first term in the right-hand side is non-negative, the ampleness of $H$ implies $H\cdot (K_X+D')=H\cdot R_f'=0$. Hence $R_f'=0$ and, by the effectivity, $K_X+D' \sim 0$. The same argument shows that $K_X+D\sim 0$. Thus $0\leq D-D'\sim 0$, hence $D=D'$.

By the assertion (1), $D$ contains a connected divisor $D''$ such that either $D''$ is a loop of smooth rational curves or $D''$ is irreducible and $p_a(D'') \ge 1$ (so that $h^0(X, K_X+D'') > 0$). By the above argument, $D = D''$. Now $K_X + D \sim 0$ and the equality (\ref{eq: section formula final}) above imply the second part of the assertion (2). This finishes the proof.
\end{proof}

\begin{lemma}\label{lem: no-isolated-minus-one}
Let $X$ be a smooth projective rational surface. Assume $X$ has only finitely many negative curves $C_j$, $1 \le j \le s$ (this occurs when $X$ has a non-isomorphism endomorphism). Let $D = \sum_{j=1}^s C_j$. Then we have:
\begin{enumerate}
\item[(1)]
If $D= 0$, then $X$ is isomorphic to $\mathbf{P}^1\times \mathbf{P}^1$ or $\mathbf{P}^2$. 
\item[(2)]
If $D$ is irreducible, then $X\cong \mathbf{F}_d$, the Hirzebruch surface with a minimal section of self-intersection $-d \le 0$.
\item[(3)]
$D$ is connected.
\item[(4)]
If $X$ admits a $q$-polarized endomorphism $f$, then for any two negative curves $C_i$, $C_j$ in $D$, we have $C_i\cdot C_j\le 1$.
\end{enumerate}
\end{lemma}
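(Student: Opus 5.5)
Parts (1) and (2) follow from the classification of rational surfaces together with the finiteness of negative curves. For (3) and (4) I would use Lemma~\ref{lem: loop criterion} and Lemma~\ref{lem: finite neg curve}.

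For (1), I would take a minimal model $h\colon X\to X'$ obtained by contracting $(-1)$-curves. If $D=0$, then $X$ contains no $(-1)$-curve, so $X=X'$ is relatively minimal: either $\mathbf{P}^2$ or a Hirzebruch surface $\mathbf{F}_d$. Since $\mathbf{F}_d$ with $d\ge1$ has a negative section, only $\mathbf{P}^2$ and $\mathbf{F}_0=\mathbf{P}^1\times\mathbf{P}^1$ remain.

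For (2), if $D=C_1$ is irreducible and $X$ is not minimal, then $C_1$ is the unique $(-1)$-curve. Then $X$ is a blowup of some $\mathbf{F}_e$ or of $\mathbf{P}^2$. A blowup of $\mathbf{F}_e$ at a point contains the exceptional curve and also the strict transform of the fibre through the point, which is a second $(-1)$-curve. So $X$ must be the one-point blowup of $\mathbf{P}^2$, which is $\mathbf{F}_1$. If instead $X$ is minimal, then $X\cong\mathbf{F}_d$ with $d\ge1$, since $D\neq0$ excludes $\mathbf{P}^2$ and $\mathbf{F}_0$.

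For (3), suppose $D=D'+D''$ with $D'$ and $D''$ nonempty unions of connected components, so $D'\cdot D''=0$. The plan is to derive a contradiction from the Hodge index theorem. First I would reduce, as in (2), to the case where $X$ is not minimal; otherwise $D$ is irreducible. Then every $(-1)$-curve $E$ lies in $D$. Take $E\subset D'$ and contract it by $\pi\colon X\to Y$. Every negative curve of $Y$ pulls back to a curve whose strict transform has negative self-intersection, so the negative curves of $Y$ are exactly the images of components of $D$ other than $E$. The image of $D''$ stays disjoint from the image of $D'$, since $E$ does not meet $D''$. Repeating this, I reach a surface with at most one negative curve, that is, some $\mathbf{F}_d$, on which both images are still nonempty. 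This happens unless we run out of $(-1)$-curves on one side. The main obstacle is to show that contractions never empty one side entirely while the other side remains nonempty. To handle this I would argue that on a non-minimal rational surface with $\rho\ge3$, each $(-1)$-curve lies in a fibration and meets some other negative curve. Indeed, on the blowup of a Hirzebruch surface every exceptional curve $E$ lies in a reducible fibre $F=E+R$, and $R$ contains a negative curve meeting $E$ or $E$ has multiplicity $\ge2$. Either way, every $(-1)$-curve is connected within $D$ to the negative section or to the other components of its fibre. Via the minimal section, this gives connectivity of all components. I expect this case check to be the delicate part. A cleaner alternative is to use the hypothesis that there are only finitely many negative curves, which forces every reducible fibre's components into $D$.

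For (4), suppose $C_i\cdot C_j\ge2$. Then $\Gamma(C_i+C_j)$ contains a loop, so by Lemma~\ref{lem: loop criterion}(1) we have $h^0(K_X+C_i+C_j)>0$. Both curves are $f^{-1}$-periodic by Lemma~\ref{lem: finite neg curve}(1), because $f^{-1}$ permutes the finite set of negative curves. Lemma~\ref{lem: loop criterion}(2) then gives $K_X+C_i+C_j\sim0$ with $C_i+C_j$ a simple loop. Since a simple loop of two smooth rational curves has a double edge, this forces $C_i\cdot C_j=2$ with $C_i,C_j\cong\mathbf{P}^1$. Now I would compute with adjunction: $K_X\cdot C_i=-2-C_i^2$, and intersecting $K_X+C_i+C_j\sim0$ with $C_i$ gives $-2-C_i^2+C_i^2+2=0$, so this gives no contradiction. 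Instead I would use the anticanonical equation $-K_X\sim C_i+C_j$ and compute $K_X^2=(C_i+C_j)^2=C_i^2+C_j^2+4\le 2$. Combined with Lemma~\ref{lem: loop criterion}(2), this would make $D=C_i+C_j$ the full boundary. On the other hand, Lemma~\ref{lem: loop criterion}(2) applied to $D$ itself (a cycle containing all the negative curves) forces every negative curve into this loop. Then the $(-1)$-curve count from (1)--(2) contradicts the structure: a two-component $D$ on a non-minimal surface is impossible by the argument at the start of Lemma~\ref{lem: 2-Go}, which shows such surfaces have at least three negative curves.
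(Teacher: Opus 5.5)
Your arguments for (1), (2) and (4) are fine and match the paper. In (1)--(2) you read the result off the classification; in (2), when several points are blown up you get a second \emph{negative} curve rather than a second $(-1)$-curve, which is all you need. In (4) you, like the paper, use Lemma~\ref{lem: loop criterion} to force $D=C_i+C_j$, and then contradict the fact that a non-minimal rational surface has at least three negative curves while $\mathbf{P}^2$ and $\mathbf{F}_d$ have at most one. The $K_X^2$ computation there is superfluous.

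Part (3) has a genuine gap. Your contraction induction rests on the claim that the negative curves of $Y$ are \emph{exactly} the images of the components of $D-E$. Only the inclusion holds: a component $C$ meeting $E$ has $\pi(C)^2=C^2+(C\cdot E)^2$, which can be $\ge 0$. So the side $D'$ can drop out of the negative locus of $Y$ entirely. For example, if $D'=E+E'$ with $E,E'$ two $(-1)$-curves meeting transversally once, contracting $E$ makes $E'$ a $0$-curve, and the inductive hypothesis on $Y$ gives no contradiction. Your remedy, that every $(-1)$-curve meets another negative curve when $\rho\ge 3$, only excludes a connected component of $D$ consisting of a single $(-1)$-curve, not this situation. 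Also, ``via the minimal section'' has no content as written. The fibration you use depends on the chosen $E$, so there is no single section tying things together. Nothing in the sketch accounts for horizontal negative curves, or for negative curves that are not $(-1)$-curves. You flag this step yourself as the delicate one, and it remains unresolved.

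The missing idea is to fix one ruling. Take a birational morphism $X\to\mathbf{F}_d$ and let $p\colon X\to\mathbf{P}^1$ be the composite fibration.
\begin{itemize}
\item A vertical negative curve cannot be a whole fibre, since fibres have square $0$, so it is a component of a reducible fibre.
\item Conversely, all components of a reducible fibre are negative. This is Zariski's lemma, not a consequence of the finiteness hypothesis as your last sentence suggests. Since fibres are connected, each reducible fibre is therefore a connected subcurve of $D$.
\item Every horizontal negative curve dominates $\mathbf{P}^1$, so it meets every reducible fibre.
\item If $X\ne\mathbf{F}_d$, then $D$ has a horizontal component. It is the proper transform of the negative section when $d\ge 1$, or of the ruling line through a blown-up point when $d=0$.
\end{itemize}
All components of $D$ are thus linked through the reducible fibres and this horizontal curve, so $D$ is connected. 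This fixed-fibration argument is the paper's proof of (3).
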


\begin{proof}
The assertions (1) and (2) are from the classification results for algebraic surfaces. We may assume that $D$ consists of at least two irreducible components.

For (3), we may run a $K_{X'}$-MMP and we will get a fibration structure $\pi_X: X'\to \mathbf{P}^1$. Every singular fiber of $\pi_X$ is connected, consists of negative curves and is hence contained in $D$. The (negative) minimal section $C_0$ (with $C_0^2 = -d$) has its proper transform $C_0'$ in $D$, which connects all singular fibers. Note that any $\pi_X$-horizontal negative curve $C$ in $D$ meets every fiber of $\pi_X$ and in particular all singular fibers. Since $D$ contains singular $\pi_X$-fibers and at least one $\pi_X$-horizontal component, $D$ is connected. This proves (3).

For (4), assume the contrary that $C_i\cdot C_j\ge2$ for some $i,j$. Then $C_i$ and $C_j$ form a loop. By Lemma~\ref{lem: finite neg curve}, the curves $C_i$ and $C_j$ are both $f^{-1}$-periodic. Thus $h^0(K_X+D)>0$ by Lemma~\ref{lem: loop criterion}(1). Hence $D=C_i+C_j$, by Lemma \ref{lem: loop criterion}(2). Since the one-point blow-up of $\mathbf{F}_d$ has at least $3$ negative curves, $X$ has at least $3$ negative curves, a contradiction.
\end{proof}

\subsection{Polarized endomorphism of essential dimension one}

\begin{definition}\label{defn: two maps are birational and birational base change}
 Let $f_i: X_i \dashrightarrow Y_i$ ($i = 1, 2$) be generically finite dominant rational maps. We say $f_1$ is \emph{birational to} $f_2$ if there exist birational maps $\sigma_X: X_1\dashrightarrow X_2$ and $\sigma_Y: Y_1 \dashrightarrow Y_2$ such that $\sigma_Y \circ f_1=f_2\circ \sigma_X$ as rational maps. We say $f_1$ is a \emph{birational base change} of $f_2$ if $f_1$ is birational to a pull-back of $f_2$ via some $\sigma_Y$.
\end{definition}

\begin{proposition}\label{prop: ed1 descend to P1}
Let $X$ be a birationally ruled smooth projective surface and let $f$ be a $q$-polarized endomorphism of $X$. Assume that the essential dimension $\ed(f) = 1$. Then we have:
\begin{enumerate}
\item[(1)]
$f$ is birational to the pull-back of a polarized endomorphism $g$ on $\mathbf{P}^1$. 
\item[(2)]
Suppose $f$ is Galois. Then we may choose $g$ such that $g$ is Galois and $\Gal(g)\cong \Gal (f)$.
\end{enumerate}
\end{proposition}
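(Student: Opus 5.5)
The plan is to unwind the definition of $\ed(f)=1$ at the level of function fields, identify the resulting curves as $\mathbf{P}^1$, and then upgrade the descent to a Galois one. Put $E=K(X)$ (the source) and $F=f^*K(X)\subset E$ (the target), so $[E:F]=\deg f=q^2$. Since $\ed(f)=1$, there are fields $F_0\subseteq F$ of transcendence degree $1$ and $E_0\subseteq E$ with $[E_0:F_0]=q^2$ and $E_0F=E$. Let $Y,Z$ be the smooth projective curves with $K(Y)=F_0$ and $K(Z)=E_0$, and let $h:Z\to Y$ be the induced finite map. Then $f$ is birational to the pull-back of $h$ along the dominant rational map $X\dasharrow Y$ given by $F_0\subset F$.

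\textbf{Normalising $F_0$.} First replace $F_0$ by its algebraic closure $F_0'$ in $F$, and $E_0$ by $E_0F_0'$. This is harmless:
\[
q^2=[E:F]\le [E_0F_0':F_0']\le [E_0:F_0]=q^2,
\]
so all the conditions persist. From now on $F_0$ is algebraically closed in $F$, which makes $F_0$ and $F$ linearly disjoint over $F_0$ in any algebraic extension.

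\textbf{Identifying the curves.} Both $E_0$ and $F_0$ are function fields of curves dominated by $X$.
\begin{itemize}
\item If $X$ is rational, Lüroth's theorem gives $Y\cong Z\cong\mathbf{P}^1$.
\item If $X$ is irrational, Lemma~\ref{lem: being ell ruled} shows $X$ is ruled over an elliptic curve $B$. Any curve dominated by $X$ has genus $\le 1$: a positive-genus curve dominated by $X$ is dominated through the Albanese, i.e.\ by $B$. So $Y$ and $Z$ are either both rational or both of genus $1$, because $Z\to Y$ is finite and $Z$ has genus $\le 1$.
\end{itemize}
In the genus-$1$ case, $h$ is an isogeny after choosing origins, so it commutes with the inversion $[-1]$. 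Set $E_0^+=E_0^{[-1]}$ and $F_0^+=F_0^{[-1]}$, both rational function fields. Then $[E_0^+:F_0^+]=q^2$, and $E_0^+F_0=E_0$, because $E_0^+F_0$ strictly contains $E_0^+$, which has index $2$ in $E_0$. Hence $E_0^+F=E$, and $F_0^+$ is a field of definition isomorphic to $K(\mathbf{P}^1)$. Re-run the algebraic-closure step afterwards; it preserves rationality by Lüroth.

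In all cases we obtain $h:\mathbf{P}^1\to\mathbf{P}^1$ of degree $q^2>1$. Identifying source and target gives an endomorphism $g$ of $\mathbf{P}^1$, which is polarized since $g^*\mathcal{O}(1)=\mathcal{O}(q^2)$. This proves (1).

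\textbf{Galois upgrade for (2).} Suppose $E/F$ is Galois with group $G$. Let $L_0$ be the Galois closure of $E_0/F_0$. Since $F_0$ is algebraically closed in $F$, $L_0$ and $F$ are linearly disjoint over $F_0$. Hence $\Gal(L_0F/F)\cong\Gal(L_0/F_0)$, and $E=E_0F$ corresponds to the subgroup $H=\Gal(L_0/E_0)$. Normality of $E/F$ forces $H$ to be normal, so $E_0/F_0$ is Galois. Moreover $\Gal(E_0/F_0)\cong\Gal(E/F)=G$ via restriction. Thus $g$ is Galois with $\Gal(g)\cong\Gal(f)$.

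The main obstacle is the elliptic-ruled case in (1): showing that a genus-$1$ field of definition can always be traded for a rational one. The $[-1]$-quotient trick is my proposed route, and it must be done before the Galois normalisation, which is then reapplied to the rational fields.
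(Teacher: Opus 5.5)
Your $[-1]$-quotient does give a rational field of definition for (1) when $Y$ and $Z$ both have genus one. However, the proposal has two gaps, and the second one breaks (2).

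\textbf{The mixed case.} The claim that $Y$ and $Z$ are ``either both rational or both of genus $1$'' does not follow from $Z\to Y$ being finite. An elliptic curve maps finitely onto $\mathbf{P}^1$, and the case $Z$ elliptic, $Y\cong\mathbf{P}^1$ is not covered by your trick.

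\textbf{The Galois upgrade.} Your argument for (2) needs the base field to be algebraically closed in $F$, and the $[-1]$-trick destroys this irreparably. Since $F_0^+\subset F_0\subseteq F$ with $[F_0:F_0^+]=2$, and $F_0$ is algebraically closed in $F$, the algebraic closure of $F_0^+$ in $F$ is $F_0$ itself. So re-running the normalisation returns exactly the genus-one field. L\"uroth controls subfields of a rational field, not algebraic overfields.

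Nor is $E_0^+/F_0^+$ Galois on its own. The extension $E_0/F_0^+$ is Galois with group $\{x\mapsto \pm x+a : a\in\ker h\}\cong\ker h\rtimes\langle[-1]\rangle$. Since $t_a\circ[-1]\circ t_a^{-1}$ is $x\mapsto -x+2a$, the subgroup $\langle[-1]\rangle$ fixing $E_0^+$ is normal only if $\ker h\subseteq Z[2]$, which is impossible once $q^2>4$. So your map $Z/\langle[-1]\rangle\to Y/\langle[-1]\rangle$ is in general not Galois. Hence (2) is unproved exactly in the elliptic ruled case, which is where the paper uses it (Theorem~\ref{thm: elliptic ruled}).

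\textbf{The missing idea.} After your normalisation, genus-one fields of definition cannot occur at all, so nothing needs to be traded.
\begin{enumerate}
\item With $F_0$ algebraically closed in $F$, $E_0$ is also algebraically closed in $E\cong E_0\otimes_{F_0}F$. Hence both $X\dasharrow Z$ and $X\dasharrow Y$ have irreducible general fibres.
\item If the target curve has genus one, such a map is a morphism factoring through the Albanese map $\pi\colon X\to B$. With connected fibres, it equals $\pi$ up to isomorphism.
\item If $Y$ and $Z$ both have genus one, this identifies $h$ with the map induced by $f$ on $B$. That map has degree $q$ by Lemma~\ref{lem: being ell ruled}, contradicting $\deg h=q^2$.
\item If $Z$ has genus one and $Y\cong\mathbf{P}^1$, the general fibres of $X\dasharrow Y$ are isomorphic to those of $X\dasharrow Z=\pi$, i.e.\ to $\mathbf{P}^1$. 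Then $X$ would be rational, a contradiction.
\end{enumerate}
This is the paper's argument (its Cases~1 and~2). Once these cases are excluded, $F_0$ is rational and algebraically closed in $F$, and your linear-disjointness argument for (2) goes through unchanged.
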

\begin{proof}

By assumption, we have a birational base change diagram:
 $$\xymatrix{
 X_1 \ar[r]^{f} \ar@{-->}[d]_{\sigma_1} & X_2 \ar@{-->}[d]^{\sigma_2} \\
 Y_1 \ar[r]^{g} & Y_2
 }$$
 where $X_1 = X$, $X_2 = X$, the horizontal maps are finite, $\sigma_i$ ($i=1,2$) are rational maps and $\dim (Y_1)=\dim (Y_2)=\mathrm{ed}(f)=1$. We may assume that
 \begin{enumerate}
 \item $Y_1, Y_2$ are smooth projective curves,
 \item the rational map $\sigma_2$ and hence $\sigma_1$ have irreducible general fibers, and
 \item $\deg(f)=\deg(g)$.
 \end{enumerate}
When $f$ is Galois, by \cite[Lemma~2.2]{BR97}, we may choose $Y_1,Y_2$ such that the function field extension $K(Y_1)/K(Y_2)$ is Galois, and $\Gal(g)\cong \Gal(f)$.

 If $X$ is rational, then both $Y_1$ and $Y_2$ are rational curves. This finishes the proof in this case. 
 
 From now on, we may assume that $X$ is irrational. By Lemma~\ref{lem: being ell ruled}, $X$ is a relatively minimal ruled surface over an elliptic curve $B$. Let $\pi: X\to B$ be the ruling. We have $q(X)=h^1(X,\mathcal{O}_X)=h^1(B,\pi_*\mathcal{O}_X)=h^1(B,\mathcal{O}_B)=1$. It follows that $q(Y_i)\leq q(X)=1$ for $i=1,2$ since the $Y_i$ are dominated by $X$. This implies that each $Y_i$ is either $\mathbf{P}^1$ or an elliptic curve. If $Y_1 \cong \mathbf{P}^1$, then $Y_2 \cong \mathbf{P}^1$ as it is dominated by $Y_1$, and we are done in this case. We are left with the following cases.

 \textbf{Case~1}: Both $Y_1$ and $Y_2$ are elliptic curves. In this case, both $\sigma_1$ and $\sigma_2$ are morphisms as every rational map from a smooth projective variety to an abelian variety extends to a morphism (cf. \cite[Chapter II, \S4, Corollary 2]{Mum74}). Let $h: X \to E:=\Alb(X)$ be the Albanese morphism. Since $q(X)=1$, the variety $E$ is an elliptic curve. By the universal property of the Albanese map (which also gives the ruling for $X$), $\sigma_1$ and $\sigma_2$ factor through $X\to E$. Since the general fibers of $\sigma_1$ and $\sigma_2$ are connected, $Y_i \cong E$ and $\sigma_i=h$ for $i=1,2$. By Lemma~\ref{lem: being ell ruled}, $g$ is a $q$-polarized endomorphism of $E$. This leads to a contradiction: $q^2=\deg(f)=\deg (g)=q$.

 \textbf{Case~2}: $Y_1$ is an elliptic curve and $Y_2$ is $\mathbf{P}^1$. As argued in \textbf{Case~1}, $\sigma_1$ is the ruling. Since a general fiber of $\sigma_2$ is dominated by a general fiber of $\sigma_1$ which is isomorphic to $\mathbf{P}^1$, the general fiber of $\sigma_2$ is isomorphic to $\mathbf{P}^1$. This implies that $X$ is rational, a contradiction.
\end{proof}

\section{Polarized endomorphisms of rational surfaces}\label{sect: rat surf}

In this section, we focus on polarized endomorphisms on rational surfaces. We first study the case of polarized endomorphisms of $\mathbf{P}^2$ (see \cite{LZ26} for an alternative proof).

\begin{proposition}\label{prop: ed for Galois for projective plane}
    Let $f: \mathbf{P}^2\to \mathbf{P}^2$ be a Galois cover of degree $>4$. Then $\ed(f)=2$.
\end{proposition}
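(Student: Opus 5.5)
The plan is to argue by contradiction: assume $\ed(f)\le 1$ and show that the Galois group cannot act on $\mathbf{P}^2$ with smooth quotient. Write $G=\Gal(f)$. Every surjective endomorphism of $\mathbf{P}^2$ of degree $>1$ is $q$-polarized with $f^*\mathcal{O}(1)=\mathcal{O}(q)$, so $|G|=\deg f=q^2>4$, and hence $q\ge 3$. The case $\ed(f)=0$ is impossible, because $\mathbf{k}$ is algebraically closed and $[K(X):K(Y)]>1$. If $\ed(f)=1$, then Proposition~\ref{prop: ed1 descend to P1}(2) gives a Galois cover $g:\mathbf{P}^1\to\mathbf{P}^1$ with $\Gal(g)\cong G$. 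Since $|G|=q^2$ is a perfect square, Lemma~\ref{lem: being cyc or dih} says $G$ is cyclic or dihedral $D_{2n}$ with $2n=q^2$. In the dihedral case $q$ is even, so $q\ge 4$ and $n\ge 8$. Since $f$ is Galois, $G$ acts biregularly on $\mathbf{P}^2$, so $G\subset\PGL(3,\mathbf{k})$ and $\mathbf{P}^2/G\cong\mathbf{P}^2$ is smooth. By Theorem~\ref{lem: generated by pref}, applied on a $G_P$-stable affine neighbourhood, the stabilizer $G_P$ of every point $P$ acts on $T_P\mathbf{P}^2$ (faithfully, by Lemma~\ref{lem: reductive faithful act}) as a group generated by pseudo-reflections. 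The task is to contradict this local condition in each of the two cases.

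\emph{Cyclic case.} A generator lifts to a diagonal matrix $\diag(1,a,b)$. If two eigenvalues coincide, the element is a homology, and I expect to rule this out separately or fold it into the computation below. Otherwise the three coordinate points are $G$-fixed, and the tangent characters are
\[
(a,b)\ \text{at } [1{:}0{:}0],\qquad (a^{-1},\,ba^{-1})\ \text{at } [0{:}1{:}0],\qquad (ab^{-1},\,b^{-1})\ \text{at } [0{:}0{:}1].
\]
As in the proof of Lemma~\ref{lem: 2-Go}, a cyclic group of order $d$ acting diagonally through characters $(\lambda,\mu)$ is generated by pseudo-reflections only if $d=\mathrm{ord}(\lambda)\cdot\mathrm{ord}(\mu)$ with the two orders coprime.
\begin{itemize}
\item At $[1{:}0{:}0]$ this gives $m:=\mathrm{ord}(a)$ and $n':=\mathrm{ord}(b)$ coprime with $mn'=q^2$.
\item Then $\mathrm{ord}(ba^{-1})=mn'=q^2$, so the condition at $[0{:}1{:}0]$ forces $m=1$.
\item Symmetrically, the condition at $[0{:}0{:}1]$ forces $n'=1$.
\end{itemize}
This contradicts $q^2>1$.

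\emph{Dihedral case} $G=\langle\tau,\sigma\rangle$. Lift $\tau$ to a diagonal matrix. Since $\sigma\tau\sigma^{-1}=\tau^{-1}$, $\sigma$ permutes the eigenlines of $\tau$. Generically $\sigma$ fixes one coordinate point $P_0$ and swaps the other two, $P_1$ and $P_2$, whose stabilizers therefore contain $\langle\tau\rangle$. Using Lemma~\ref{lem: being cyc or dih}(2ii), these stabilizers are $\langle\tau\rangle$ itself, or an abelian group of type $\langle\sigma\tau^r,\tau^{n/2}\rangle$ if $\sigma\tau^r$ happens to fix $P_1$. Applying the cyclic/abelian pseudo-reflection computation above at $P_1$ and $P_2$, together with the constraint at $P_0$, should yield a contradiction once $n\ge 8$. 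Here one may also use that the induced action at $P_0$ must be an imprimitive reflection group, and that $\tau$ acts on $T_{P_0}$ with characters inverse to each other up to the lift.

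I expect the dihedral case to be the main obstacle. The degenerate configurations (repeated eigenvalues of $\tau$, or $\sigma$ fixing a $\tau$-invariant line rather than permuting eigenpoints as above) require careful bookkeeping. The bound $\deg f>4$, i.e.\ excluding $q=2$ with $G=D_4\cong(\mathbb{Z}/2)^2$, must enter exactly here: for $q=2$ the group $(\mathbb{Z}/2)^2$ acting by $\diag(\pm1,\pm1,1)$ genuinely has quotient $\mathbf{P}^2$. My first attempt is to reduce to the abelian subgroup $\langle\tau\rangle$ of order $n=q^2/2$ and rerun the three-fixed-point computation for it. For this I would note that the image of $P_1$ in $\mathbf{P}^2/\langle\tau\rangle$ must still be smooth, because $\langle\tau\rangle$ has index $2$ and the pseudo-reflections of $G_{P_1}$ lying outside $\langle\tau\rangle$ can be controlled.
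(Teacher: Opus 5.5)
\textbf{The dihedral case is a genuine gap.} Your reduction to $G$ cyclic or $G=D_{2n}$ with $2n=q^2$, $n\ge 8$, is correct, and so is your cyclic case, which is essentially the paper's argument. Your computation already covers repeated eigenvalues: a diagonal element fixes all three coordinate points in any case, so no separate homology case is needed. The dihedral case, however, is only sketched, and the sketch goes wrong in two places.

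\emph{The constraint at $P_0$ gives nothing by itself.} Since $G$ is nonabelian, its faithful action on $T_{P_0}$ is irreducible. It is therefore a standard reflection representation of $D_{2n}$, which is generated by the reflections $\sigma\tau^r$ and has smooth quotient. The paper gets its contradiction at the $G$-fixed point only through an extra input. Because $f$ is given by forms of degree $q$, the basic invariants of $G$ on $T_x$ have degree at most $q$. By Shephard--Todd, those of $D_{2n}$ have degrees $2$ and $n=q^2/2>q$.

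\emph{Your fallback cannot work as stated.} Rerunning the three-point computation for $\langle\tau\rangle$ needs smoothness of $\mathbf{P}^2/\langle\tau\rangle$ at all three fixed points. At $P_0$ this fails: $\tau$ acts on $T_{P_0}$ with characters $\zeta,\zeta^{-1}$ ($\zeta$ a primitive $n$-th root of unity), so $\langle\tau\rangle$ contains no pseudo-reflections there and the quotient is singular. Also, a stabilizer containing $\tau$ can never be $\langle\sigma\tau^r,\tau^{n/2}\rangle$ once $n\ge 3$, since that group has order $4$.

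\textbf{How to complete your local route.} It works using only $P_1$ and $P_2$. Since $\sigma\Fix(\tau)=\Fix(\tau^{-1})=\Fix(\tau)$, there are three cases.
\begin{enumerate}
\item If $\tau$ is a homology, its isolated fixed point is $G$-fixed and $\tau$ acts on the tangent space there as a nontrivial scalar. By Lemma~\ref{lem: reductive faithful act}, $\tau$ would then be central in $G$, which is false for $n\ge 3$.
\item If $\Fix(\tau)=\{P_0,P_1,P_2\}$ and $\sigma$ fixes all three points, then $\sigma$ is diagonal in the same basis and commutes with $\tau$. This forces $\tau^2=1$.
\item Otherwise the involution $\sigma$ fixes $P_0$ and swaps $P_1,P_2$. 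Then $G_{P_1}=G_{P_2}=\langle\tau\rangle$ exactly, and your cyclic criterion applies at both points. Write $\tau=\diag(1,a,b)$, $m=\mathrm{ord}(a)$, $n'=\mathrm{ord}(b)$, $\ell=\mathrm{ord}(ba^{-1})$. The criterion gives $m\ell=n$ at $P_1$ and $n'\ell=n$ at $P_2$. Since $\tau$ has order $n$ in $\mathrm{PGL}(3,\mathbf{k})$, also $\mathrm{lcm}(m,n')=n$. Hence $\ell=1$, i.e.\ $a=b$, contradicting that $\tau$ is not a homology.
\end{enumerate}
This route is more elementary than the paper's, since it avoids Shephard--Todd and the degree bound on invariants. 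As written, though, your proposal proves only the cyclic case.
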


\begin{proof}
    Let $G$ be the Galois group of $f$. Write $f^*\mathcal{O}(1) = \mathcal{O}(q)$ with $q > 2$.
    
    Suppose that $G$ fixes a point $x$ on $\mathbf{P}^2$. Then $G$ acts faithfully on the tangent space $T_x$. Since $\mathbf{P}^2/G=\mathbf{P}^2$ is still smooth, by the Chevalley-Shephard-Todd theorem and faithful representation lemma (cf.~Theorem \ref{lem: generated by pref} and Lemma \ref{lem: reductive faithful act}), $G\cong G|_{T_x}\subset \mathrm{GL}(2, {\bold k})$ is generated by pseudo-reflections. 
    
We continue to prove the proposition.
Assume to the contrary that $\ed(f)=1$. Then we have a birational base change diagram (see Definition~\ref{defn: two maps are birational and birational base change}): 
    $$\xymatrix{
    \mathbf{P}^2 \ar[r]^{f} \ar@{-->}[d] & \mathbf{P}^2 \ar@{-->}[d] \\
    \mathbf{P}^1 \ar[r]^{h} & \mathbf{P}^1
    }$$
    where the horizontal maps are finite. Moreover, $G$ acts faithfully on $\mathbf{P}^1$. By the classical classification of finite subgroups of $\mathrm{PGL}(2, {\bold k})$ (\cite[Introduction 3.1]{Bea10}), $G$ is isomorphic to $A_4, S_4, A_5, D_{2m}$ or $C_m$ with $|G|>4$. Since $|G| = \deg f = q^2$ is a square number, $G$ can only be $D_{2m}$ or $C_m$.

   Consider the case $G=C_m=\langle g\rangle$ ($m \ge 2$). Without loss of generality, we may assume that $g$ is represented by a diagonal matrix $\mathrm{diag}(\lambda_0,\lambda_1,\lambda_2)\in \mathrm{GL}(3,\mathbf{k})$. Then $g$ fixes three points $p_0=[1:0:0]$, $p_1=[0:1:0]$ and $p_2=[0:0:1]$ on $\mathbf{P}^2$. At each fixed point $p_i$, $G$ acts faithfully on the tangent space $T_{p_i}$ as a pseudo-reflection group. In particular, this implies that
    $$\langle\diag(\frac{\lambda_1}{\lambda_0},\frac{\lambda_2}{\lambda_0})\rangle,\ \ \ \langle\diag(\frac{\lambda_0}{\lambda_1},\frac{\lambda_2}{\lambda_1})\rangle,\ \ \ \langle\diag(\frac{\lambda_0}{\lambda_2},\frac{\lambda_1}{\lambda_2})\rangle$$ are all pseudo-reflections subgroups of $\mathrm{GL}(2,\mathbf{k})$. Let $a,b,c$ be the orders of $\frac{\lambda_0}{\lambda_1}, \frac{\lambda_1}{\lambda_2}, \frac{\lambda_2}{\lambda_0}$ respectively. The fact that these are pseudo-reflection groups implies $$\gcd(a,b)=\gcd(b,c)=\gcd(c,a)=1$$ and $ab=bc=ca=\deg(f)$, which leads to a contradiction.
    
    Consider the case $G=D_{2m}$, a dihedral group for some integer $m$, where $$D_{2m}=\langle s,r \mid s^2=r^m=1,\ srs=r^{-1}\rangle.$$ 
   Recall that $\mathrm{Fix}(r)$ dentoes the pointwise fixed locus of $r$. 
    Since $$s\mathrm{Fix}(r)=\mathrm{Fix}(srs^{-1})=\mathrm{Fix}(r^{-1})=\mathrm{Fix}(r),$$ $\mathrm{Fix}(r)$ is $s$-stable. In particular, there exists a point $x\in \mathrm{Fix}(r)$ such that the involution $s$ fixes $x$. This is because either $r$ is a pseudo-reflection and $\mathrm{Fix}(r)$ consists of a line and an isolated point or $\Fix(r)$ consists of three isolated points. Hence $x$ is fixed by $G$. Now $G \cong G|_{T_x}\subset \mathrm{GL}(2,\mathbf{k})$ is generated by pseudo-reflections whose degrees of the generators of the invariant ring $k[x,y]^G$ are at most $q=\sqrt{|G|}$, since $f$ is defined by degree $q$ polynomials. By Shephard-Todd's classification of pseudo-reflection groups \cite{ST54}, this happens only when $m=2$ and $G\cong (\mathbb{Z}/2\mathbb{Z})^2$ which implies $\deg f = 4$, a contradiction. This finishes the proof.
\end{proof}

\medskip

Let $\mathbf{F}_d$ be the Hirzebruch surface with the ruling $\pi:\mathbf{F}_d \to \Gamma \cong \mathbf{P}^1$. Let $C_0$ be the minimal section such that $C_0^2=-d$ and let $F$ be a fiber of $\pi$. In particular, when $d=0$, $\mathbf{F}_0\cong\mathbf{P}^1\times \mathbf{P}^1$.

\begin{theorem}\label{thm:ed of F_0}
Let $X \cong \mathbf{P}^1 \times \mathbf{P}^1$ and $f: X \to X$ be a $q$-polarized endomorphism. Suppose that $f$ preserves each of the two projections $p_j: X \to \mathbf{P}^1$, $f$ is Galois with $\Gal(f) = G$ and $|G| = \deg f > 4$. Then the essential dimension $\ed(f) = 2$.
\end{theorem}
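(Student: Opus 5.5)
We argue by contradiction, following the pattern of Proposition~\ref{prop: ed for Galois for projective plane}. Suppose $\ed(f)=1$. Since $\ed(f)\ge 1$ for a nontrivial cover, Proposition~\ref{prop: ed1 descend to P1} then provides a Galois endomorphism $h:\mathbf{P}^1\to\mathbf{P}^1$ with $\Gal(h)\cong G$, and $f$ is birational to its pull-back. Hence $G$ embeds in $\PGL(2,\mathbf{k})$. As $|G|=q^2$ is a square, Lemma~\ref{lem: being cyc or dih}(1) shows $G$ is cyclic or dihedral $D_{2n}$, and $|G|>4$.

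Next we use that $f$ preserves both projections. Write $f=f_1\times f_2$ with $f_j$ endomorphisms of $\mathbf{P}^1$ of degree $q$. Since $f$ is Galois and the projections $p_j$ are $G$-equivariant in the sense that $G$ permutes fibres of $p_j$, we get homomorphisms $\rho_j:G\to \PGL(2,\mathbf{k})$ from the induced actions on the two factors. Each $G$-orbit of a general fibre of $p_1$ must lie in $f^{-1}$ of a fibre, so $\ker\rho_1$ acts on each $p_1$-fibre. Thus $G\subset \rho_1(G)\times\rho_2(G)\subset \Aut(\mathbf{P}^1)^2$, with $G$ acting diagonally. Computing $K(X)^G$ then gives $|G|=q^2$. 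We also have $|\rho_j(G)|\ge q$, since $f_j$ factors through $\mathbf{P}^1/\rho_j(G)$.

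The key tool is a point fixed by all of $G$. Given such a point, $G$ acts faithfully on its tangent space by Lemma~\ref{lem: reductive faithful act}, and $G$ is a pseudo-reflection group by Theorem~\ref{lem: generated by pref}. This tangent action is a direct sum of two one-dimensional characters, since $G$ preserves the two fibres through the point. Therefore $G$ is abelian, exactly as in Lemma~\ref{lem: stabilizer-abelian-at-node}, and $G=K_1\times K_2$, where the $K_j$ are cyclic stabilizers of the two fibres through the point.

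\textbf{Cyclic case.} Take $G=\langle g\rangle$ with $g=(g_1,g_2)$. Each $g_j$ has two fixed points on $\mathbf{P}^1$, giving four $G$-fixed points $(a,b)$ on $X$. At every one of them the pseudo-reflection condition forces $G\cong \langle \alpha\rangle\times\langle\beta\rangle$ with coprime orders, where $\alpha,\beta$ are the eigenvalues of $g_1,g_2$ at $a,b$. The eigenvalue of $g_1$ at its other fixed point is $\alpha^{-1}$, so the orders are the same at all four points. Hence $G\cong \mathbb{Z}/a\times\mathbb{Z}/b$ with $\gcd(a,b)=1$ and $ab=q^2$. The condition $|\rho_j(G)|\ge q$ then gives $a,b\ge q$, so $a=b=q$. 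Coprimality now forces $q=1$, contradicting $q>2$.

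\textbf{Dihedral case.} Write $G=D_{2n}=\langle\tau,\sigma\rangle$ with $2n=q^2>4$, so $q$ is even and $n\ge 8$. By Lemma~\ref{lem: being cyc or dih}(2ii), $\tau$ must act nontrivially on at least one factor. Arguing as for $\mathbf{P}^2$, $\sigma$ normalizes $\langle\tau\rangle$, hence preserves $\Fix(\tau)$. This fixed locus is a finite set of at most four points, or a union of fibres together with points, and in either case it contains a point $x$ fixed by $\sigma$. Such an $x$ is fixed by all of $G$. The tangent argument above then makes $G$ abelian, which is impossible since $n\ge 3$.

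The main obstacle is the dihedral case. We must justify that $\sigma$ really has a fixed point inside $\Fix(\tau)$, including when $\Fix(\tau)$ contains whole fibres, where $\sigma$ may swap them. If $\sigma$ swaps the two $\tau$-fixed fibres on one factor, we will switch to the other factor or to the fixed points of $\tau^{n/2}$. Alternatively, we can use that every element of order $n$ of $\PGL(2,\mathbf{k})$ has its two fixed points swapped by the dihedral reflection. In that situation we will instead use the four points obtained as products of $\tau$-fixed points, on which $\sigma$ acts as a product of transpositions, and handle the remaining configurations via the degree bound $|\rho_j(G)|\ge q$ together with the order counting above.
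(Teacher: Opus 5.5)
\textbf{There is a genuine gap in your dihedral case, and the route you sketch there cannot be completed.} Your cyclic case is fine.

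\emph{Why the dihedral step fails.} Since $G\hookrightarrow\rho_1(G)\times\rho_2(G)$, the element $\tau=(\tau_1,\tau_2)$ has order $\mathrm{lcm}(\operatorname{ord}\tau_1,\operatorname{ord}\tau_2)=n\ge 8$. So some $\tau_j$ has order $\ge 3$. Then $\sigma_j$ preserves $\Fix(\tau_j)=\{a,a'\}$, and it cannot fix $a$ and $a'$. Otherwise $\sigma_j$ and $\tau_j$ would both be diagonal in a coordinate with $a=0$, $a'=\infty$, hence commute. Then $\sigma_j\tau_j\sigma_j^{-1}=\tau_j^{-1}$ would give $\tau_j^2=1$. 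So $\sigma_j$ swaps $a$ and $a'$.

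Because $\Fix(\tau)=\Fix(\tau_1)\times\Fix(\tau_2)$, the element $\sigma$ moves every point of $\Fix(\tau)$. Your assertion that $\Fix(\tau)$ contains a $\sigma$-fixed point is therefore false in every configuration. The analogy with $\mathbf{P}^2$ breaks because there $\Fix(r)$ is three points, or a line plus a point, which an involution cannot permute freely.

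Worse, a $G$-fixed point on $X$ would require both $\rho_j(G)$ to fix a point of $\mathbf{P}^1$, i.e.\ to be cyclic. But the cyclic quotients of $D_{2n}$ have order $\le 2$, which would give $|G|\le 4$. So no variant (passing to $\tau^{n/2}$, switching factors, etc.) will produce the fixed point your pseudo-reflection argument needs. Your last paragraph is a plan, not a proof.

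\emph{What is missing is order counting, which is the paper's whole argument.} Your inequality is also reversed. That $f_j$ factors through $\mathbf{P}^1\to\mathbf{P}^1/\rho_j(G)$ gives $|\rho_j(G)|\le q$; in fact $|\rho_j(G)|$ divides $q$. Combined with $G\hookrightarrow\rho_1(G)\times\rho_2(G)$ and $|G|=q^2$, this forces $|\rho_j(G)|=q$ and $G\cong\rho_1(G)\times\rho_2(G)$. The paper gets $|G_i|=q$ by noting that $f_i$ is the $q$-polarized quotient map by $G_i$.

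This settles both cases at once, with no fixed points or tangent representations:
\begin{itemize}
\item A cyclic group of order $q^2$ is not a product of two groups of order $q>1$.
\item In $D_{2n}$ with $n=q^2/2>q$, every normal subgroup of order $q$ lies in $\langle\tau\rangle$. A decomposition $G=A\times B$ with $|A|=|B|=q$ would then put $G=AB$ inside $\langle\tau\rangle$, which is absurd.
\end{itemize}
Your cyclic-case computation survives the sign error: $ab=q^2$ with $a,b\le q$ also forces $a=b=q$, and coprimality then gives the contradiction. The dihedral case needs this counting argument in place of the fixed-point strategy.
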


\begin{proof}
 Assume to the contrary that $\operatorname{ed}(f) = 1$. We have the rational base change diagram:
 $$\xymatrix{
 \mathbf{P}^1\times\mathbf{P}^1 \ar[r]^{f} \ar@{-->}[d]_{\sigma_1} & \mathbf{P}^1\times\mathbf{P}^1 \ar@{-->}[d]^{\sigma_2} \\
 \mathbf{P}^1 \ar[r]^{g} & \mathbf{P}^1
 }$$
 such that $g$ is a Galois cover with the Galois group $G$. In particular, $G\leq \mathrm{PGL}(2, {\bf k})$. Since $f$ preserves the two projections from $\mathbf{P}^1\times\mathbf{P}^1$, $f$ splits as $f_i: \mathbf{P}^1\to \mathbf{P}^1$ for $i=1,2$. Moreover, let $G_j \le \PGL(2, {\bf k})$ ($j = 1, 2$) be the groups induced by $G$ on the  two bases of the two projections of $X$
 so that $G \le G_1 \times G_2$. By \cite[Theorem~1.3]{MZ18}, $f_i$ is $q$-polarized for each $i$. Since $f_i$ is just the quotient map ${\bf P}^1 \to {\bf P}^1/G_i$, we have $|G_i| = q$. Comparing orders, we have $G = G_1 \times G_2$.
By the classical classification of finite subgroups of $\mathrm{PGL}(2, {\bf k})$ (\cite[Introduction]{Bea10}) and the fact that $G$ is the direct product of two order-$q$ subgroups of $\mathrm{PGL}(2, {\bf k})$, we have that $G\cong \mathbb{Z}/2\mathbb{Z}\times \mathbb{Z}/2\mathbb{Z}$. This contradicts our assumption that $|G|>4$.
\end{proof}

\begin{remark}\label{preserve projection}
The assumption that $f$ preserves each of the two projections of $\mathbf{P}^1 \times \mathbf{P}^1$ is necessary. Indeed, as in \cite[Example 5.13]{LOZ25}, for any $q \ge 2$, we define $f: (x,y)\mapsto (y,x^{q^2})$. Then $f$ is $q$-polarized and cyclic Galois with $\deg f = q^2$. Thus $\ed(f) = 1$ by Lemma \ref{lem: facts of essential dimension}(3).
\end{remark}

\medskip

In the rest of this section, we will deal with general rational surfaces.

\begin{lemma}\label{lem: pullback is self}
Let $X$ be a smooth projective surface, and let $f: X \to X$ be a $q$-polarized Galois endomorphism with cyclic Galois group $\mathrm{Gal}(f) = \langle g \rangle$. Let $C \subset X$ be an irreducible smooth curve such that $f^{-1}(C) = C$. Then $f^* C = q C$, and $C$ is an elliptic curve.
\end{lemma}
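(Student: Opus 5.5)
The plan is to first pin down the ramification index along $C$ using polarization and the projection formula, and then to rule out $C\cong\mathbf{P}^1$ by a local pseudo-reflection argument at a point fixed by the whole group $G=\langle g\rangle$.

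\emph{Step 1: $f^*C=qC$.} Since $f^{-1}(C)=C$ and $f$ is Galois, $C$ is $G$-stable. Write $G_C=\{h\in G\mid h|_C=\id\}$ and $r=|G_C|$. As in Lemma~\ref{lem: finite neg curve}, $r$ is the ramification index along $C$, so $f^*C=rC$. Also $f|_C\colon C\to C$ has degree $|G/G_C|=\deg f/r$, hence $f_*C=(\deg f/r)\,C$. Take an ample $H$ with $f^*H\sim qH$. The projection formula gives
$$q\,(H\cdot C)=f^*H\cdot C=H\cdot f_*C=\frac{q^2}{r}\,(H\cdot C).$$
Since $H\cdot C>0$, this forces $r=q$. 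Therefore $f^*C=qC$, $|G_C|=q$, and $f|_C$ is the quotient of $C$ by the cyclic group $G/G_C$ of order $q\ge 2$.

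\emph{Step 2: $g(C)\le 1$.} The map $f|_C$ is a self-map of the smooth curve $C$ of degree $q\ge2$. By Hurwitz, $2g(C)-2\ge q(2g(C)-2)$, so $g(C)\le 1$. It remains to exclude $C\cong\mathbf{P}^1$.

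\emph{Step 3: excluding $\mathbf{P}^1$.} Suppose $C\cong\mathbf{P}^1$. Then $f|_C$ is a cyclic Galois cover $\mathbf{P}^1\to\mathbf{P}^1$ of degree $q$, which up to coordinates is $z\mapsto z^q$. In particular it has a point $p\in C$ fixed by $G/G_C$. Since $G_C$ fixes $C$ pointwise, $p$ is fixed by all of $G$.

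Take a $G$-stable affine neighbourhood of $p$. Then $X/G\cong X$ is smooth at the image of $p$, so by Theorem~\ref{lem: generated by pref} and Lemma~\ref{lem: reductive faithful act}, $G\cong G|_{T_pX}\subset\GL(2,\mathbf{k})$ is a cyclic group generated by pseudo-reflections. The group $G$ preserves the line $T_pC$, and $G_C$ acts trivially on it.

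Diagonalize $G$ with $T_pC$ as one eigenline, and let $\ell$ be the other eigenline. Every pseudo-reflection in the diagonal group is either trivial on $T_pC$ or trivial on $\ell$. Hence $G=K_1K_2$ with $K_1\cap K_2=1$. Here $K_1$ is the subgroup acting trivially on $T_pC$; it equals $G_C$, because an element fixing $C$ pointwise acts trivially on $T_pC$, and conversely an element of finite order acting trivially on $T_pC$ fixes $C$ near $p$ by linearization. Since $G$ is cyclic, $|K_1|$ and $|K_2|$ are coprime, exactly as in the proof of Lemma~\ref{lem: 2-Go}. But $|K_1|=|G_C|=q$ and $|K_2|=q^2/q=q$, so $\gcd(q,q)=q\ge 2$, a contradiction. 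Thus $C$ is an elliptic curve.

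The main obstacle is this identification in Step 3: that $T_pC$ is an eigenline of the tangent representation, and that the subgroup acting trivially on $T_pC$ is exactly $G_C$. I would justify it via a $G$-equivariant local linearization at $p$ (in characteristic zero a finite group acts linearly in suitable formal coordinates), choosing the coordinates so that $C$ is a coordinate axis.
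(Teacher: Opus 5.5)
Your proof is correct and follows essentially the same route as the paper: $r=q$ from polarization, genus $\le 1$, then a $G$-fixed point on $C\cong\mathbf{P}^1$ where Theorem~\ref{lem: generated by pref} forces $G$ to be generated by pseudo-reflections, which the eigenvalue structure rules out. The paper phrases this last step as $g$ acting by a primitive $q$-th root of unity along $T_pC$ and a primitive $q^2$-th root transversally, so the pseudo-reflections only generate $\langle g^q\rangle$; this is equivalent to your coprimality argument with $|K_1|=|K_2|=q$. The step you flagged is easier than you fear: $K_1=G_C$ follows directly from Lemma~\ref{lem: reductive faithful act} applied to the faithful action of $G/G_C$ on the curve $C$ at its fixed point $p$, so you never need to straighten $C$ in linearizing coordinates.
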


\begin{proof}
Since $f^{-1}(C) = C$, we have that $f^* C = rC$ for some $r>0$.
As $f$ is $q$-polarized, it follows that $r = q$, hence $f^* C = qC$. Since the restriction $f|_C$ is a $q$-polarized endomorphism of $C$, the genus of $C$ is either $0$ or $1$ by \cite[Theorem~1.3]{NZ}.

Note that $g(C)=C$. Assume that $C \cong \mathbf{P}^1$. Then $g|_C$ has a fixed point $P \in C$. 
After choosing local coordinates $x,y$ at $P$, we may write the action of $g$ as $g(x,y)=(\lambda x,\mu y)$, where $\lambda,\mu$ are powers of a primitive $q^2$-th root of unity $\zeta$ and $C$ is defined by $\{y=0\}$. Since $f^*C=qC$, $\lambda$ is a primitive $q$-th root of unity. Since the order of $g$ is $q^2$, $\mu$ is a primitive $q^2$-th root of unity. After taking a $G$-stable affine neighborhood of $P$, Lemma \ref{lem: generated by pref} implies that $G$ is generated by pseudo-reflections. Since $g^r$ is a pseudo-reflection if and only if $q|r$, $\langle g\rangle$ is not generated by pseudo-reflections. This yields a contradiction. Therefore, $C$ is an elliptic curve.
\end{proof}

\begin{lemma}\label{lem:relative-minimal}
Let $X$ be a smooth projective rational surface and let
$f\colon X\to X$ be a $q$-polarized cyclic Galois endomorphism with $\deg f \ge 4$. Then $X\cong \mathbf{P}^2$ or $X\cong \mathbf{F}_d$ with $d\geq 0$.
\end{lemma}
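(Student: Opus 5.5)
We argue by the configuration of negative curves. Let $D=\sum_{j=1}^s C_j$ be the reduced sum of all negative curves on $X$. There are only finitely many of them, since $f$ is not an isomorphism; see Lemma~\ref{lem: finite neg curve}. If $D=0$, Lemma~\ref{lem: no-isolated-minus-one}(1) gives $X\cong \mathbf{P}^2$ or $\mathbf{P}^1\times\mathbf{P}^1=\mathbf{F}_0$. If $D$ is irreducible, Lemma~\ref{lem: no-isolated-minus-one}(2) gives $X\cong \mathbf{F}_d$. So it suffices to derive a contradiction from $s\ge 2$.

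Assume $s\ge 2$. Then $X$ is a blow-up of a Hirzebruch surface, so $X$ is not relatively minimal and $s\ge 3$. We first check the hypotheses of Lemma~\ref{lem: 2-Go}. The divisor $D$ is connected by Lemma~\ref{lem: no-isolated-minus-one}(3), and $C_i\cdot C_j\le 1$ by Lemma~\ref{lem: no-isolated-minus-one}(4). Each $C_i\cong\mathbf{P}^1$: otherwise $p_a(C_i)\ge 1$, and Lemma~\ref{lem: loop criterion}(1) gives $h^0(K_X+D)>0$. Since every $C_i$ is $f^{-1}$-periodic, Lemma~\ref{lem: loop criterion}(2) then forces $D=C_1$ irreducible with $p_a=1$, contradicting $s\ge 3$. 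Now Lemma~\ref{lem: 2-Go}(3), together with $\deg f\ge 4$, excludes $s\ge 4$. Lemma~\ref{lem: 2-Go}(2) then shows $s=3$, $D$ is a linear chain, and some component $C$ of $D$ satisfies $f^{-1}(C)=C$.

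The key step is to apply Lemma~\ref{lem: pullback is self} to this $C$. Here $f$ is $q$-polarized and cyclic Galois, $C$ is smooth irreducible, and $f^{-1}(C)=C$. The lemma says $C$ must be an elliptic curve. But $C\cong\mathbf{P}^1$, which is a contradiction. Hence $s\le 1$, and the statement follows.

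The main point requiring care is the $s=3$ case. I must make sure the component $C$ produced in Lemma~\ref{lem: 2-Go}(2) really satisfies $f^{-1}(C)=C$ for $f$ itself, not merely for an iterate. Lemma~\ref{lem: pullback is self} needs the cyclic Galois group of $f$, and iterates of a Galois map need not be Galois. In the linear chain $C_1+C_2+C_3$, the ramification labels alternate between $d$ and $1$ by \eqref{eq: alteration of index}. If the middle curve has label $1$, then the projection-formula argument for the $(-1)$-curve forces that $(-1)$-curve to be an end curve with label $d$. Moreover $f$ induces a bijection on negative curves preserving self-intersection up to the relations in Lemma~\ref{lem: finite neg curve}, and it preserves the chain structure. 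So $f^{-1}$ fixes the middle curve. This is what the proof of Lemma~\ref{lem: 2-Go}(2) asserts, since $f^{-1}(C_i)=C_i$ for each $i$ by Lemma~\ref{lem: finite neg curve}(2) ($G$-stability plus the bijection). I would double-check this via Lemma~\ref{lem: finite neg curve}(1): the bijection is induced by $f^{-1}$, and $G$-stability shows each preimage is a single negative curve mapping onto the original with $f^{-1}(f(C))=C$. If needed, I would use the self-intersection and adjacency constraints on the chain to pin down that $f^{-1}$ fixes the middle curve.
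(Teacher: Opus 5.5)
Your proposal is correct and matches the paper's proof: you verify the hypotheses of Lemma~\ref{lem: 2-Go} via Lemmas~\ref{lem: loop criterion} and \ref{lem: no-isolated-minus-one}, use $\deg f\ge 4$ to reduce to a three-curve linear chain with an $f^{-1}$-stable component, and then contradict Lemma~\ref{lem: pullback is self}. One correction to your final paragraph: $G$-stability in Lemma~\ref{lem: finite neg curve}(2) only gives $f^{-1}(f(C_i))=C_i$, not $f^{-1}(C_i)=C_i$. The correct reason the middle curve is fixed by $f^{-1}$ itself, and not merely by an iterate, is the one you also gave: $f^{-1}$ permutes the negative curves and preserves incidence, so it induces an automorphism of the three-vertex path, and any such automorphism fixes the middle vertex.
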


\begin{proof}
Suppose $X$ is not isomorphic to ${\bf P}^2$ or ${\bf F}_d$. Then $X$ is the blow-up of ${\bold F}_d$. Let $D$ be the reduced divisor consisting of all negative curves on $X$. Since there exists a $(-1)$-curve, Lemma~\ref{lem: loop criterion} implies that every irreducible component of $D$ is isomorphic to $\mathbf{P}^1$.
By Lemma \ref{lem: no-isolated-minus-one}, the conditions of Lemma~\ref{lem: 2-Go} are satisfied.
Since $\deg(f)=q^2\ge 4$ and by Lemma~\ref{lem: 2-Go},
$D$ contains an $f^{-1}$-stable curve $D_1$. This contradicts Lemma~\ref{lem: pullback is self} as $D_1$ is a rational curve. 
\end{proof}

\begin{theorem}\label{thm:ed-two}
Let $X$ be a smooth projective rational surface, and let
$f\colon X\to X$ be a $q$-polarized endomorphism .
Assume that $X$ is not isomorphic to $\mathbf{P}^2$ or $\mathbf{P}^1\times \mathbf{P}^1$, $f$ is Galois, and $\deg(f)>4$. Then $f$ is incompressible.
\end{theorem}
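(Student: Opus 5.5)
Argue by contradiction: assume $\ed(f)=1$. By Proposition~\ref{prop: ed1 descend to P1}(2), $f$ is birational to the pull-back of a Galois cover $h:\mathbf{P}^1\to\mathbf{P}^1$ with $\Gal(h)\cong G:=\Gal(f)$. Since $|G|=\deg f=q^2$ is a perfect square, Lemma~\ref{lem: being cyc or dih}(1) leaves only two cases: $G$ cyclic or $G\cong D_{2n}$. In the cyclic case, Lemma~\ref{lem:relative-minimal} (with $\deg f>4\ge 4$) gives $X\cong\mathbf{P}^2$ or $X\cong\mathbf{F}_d$. By hypothesis $X\not\cong\mathbf{P}^2,\mathbf{P}^1\times\mathbf{P}^1$, so $X\cong\mathbf{F}_d$ with $d\ge1$. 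Its minimal section $C_0\cong\mathbf{P}^1$ is the unique negative curve, so $f^{-1}(C_0)=C_0$ by Lemma~\ref{lem: finite neg curve}. This contradicts Lemma~\ref{lem: pullback is self}, which forces $C_0$ to be elliptic. Hence $G$ is not cyclic, and in fact this argument shows directly that no cyclic Galois $f$ exists on such $X$ with $\deg f\ge4$.

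It remains to treat $G\cong D_{2n}$, $2n=q^2$ (so $n\ge 8$, $n$ even). Here I would use geometry on $X$ rather than on $\mathbf{P}^1$. Let $D$ be the reduced sum of negative curves. Since $X\not\cong\mathbf{P}^2,\mathbf{F}_0$, Lemma~\ref{lem: no-isolated-minus-one} gives $D\neq0$. Every component of $D$ is $G$-stable by Lemma~\ref{lem: finite neg curve}(2).

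\textbf{Case $X\cong\mathbf{F}_d$, $d\ge1$.} The ruling $\pi$ is $f$-equivariant (it is the unique extremal contraction other than the one onto $\mathbf{P}^2$ when $d=1$, and unique otherwise), and $C_0$ is $G$-stable. The group $G$ acts on the base $\mathbf{P}^1$ through the quotient map $\pi_*$, whose degree is $q$ by \cite[Theorem~1.3]{MZ18}; call the image $G_1$. The kernel $K$ acts fiberwise, with $|K|=q$. I would pick a $G_1$-fixed point (or a $G$-stable fiber $F$) together with the point $p=C_0\cap F$, which is $G$-fixed. The curves $C_0$ and $F$ are $G$-stable, smooth, and meet transversally at $p$, so Lemma~\ref{lem: stabilizer-abelian-at-node} forces $G$ abelian, contradicting $D_{2n}$ nonabelian.

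The delicate point is producing a $G$-stable fiber. If $G_1$ has no fixed point on the base, I would instead argue that $f^*F\equiv qF$, so the base map is a $q$-polarized Galois endomorphism of $\mathbf{P}^1$ of degree $q$, and its Galois group $G_1$ is a quotient of $D_{2n}$ of order $q$. The quotients of $D_{2n}$ are dihedral of smaller order or abelian of order $\le4$ (Lemma~\ref{lem: being cyc or dih}(2i)). If $G_1$ is cyclic it has two fixed points. If $G_1$ is dihedral of order $q$, then $K$ has order $q$ and is a normal subgroup of $D_{2n}$ with dihedral quotient, so $K\subset\langle\tau\rangle$ is cyclic, acting on each fiber. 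I would then combine this with $C_0$ being a fixed-point set of $K$ on each fiber to get a contradiction via a pseudo-reflection argument at a $G$-fixed point (Theorem~\ref{lem: generated by pref}).

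\textbf{Case $X$ not minimal.} Lemma~\ref{lem: no-isolated-minus-one} and Lemma~\ref{lem: loop criterion} give that $D$ is a connected configuration of smooth rational curves with $C_i\cdot C_j\le1$, and $\#D\ge3$. If two components of $D$ meet, they are $G$-stable and meet transversally at a single point, which is then $G$-fixed, so Lemma~\ref{lem: stabilizer-abelian-at-node} makes $G$ abelian, again a contradiction. Since $D$ is connected with at least two components, such a pair exists; transversality follows from $C_i\cdot C_j=1$.

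The main obstacle I expect is the dihedral case on $\mathbf{F}_d$. The plan there is to force a $G$-fixed point on $C_0$ lying on a $G$-stable fiber, using the structure of normal subgroups of $D_{2n}$ and the polarization of the base map.
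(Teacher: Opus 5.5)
Your cyclic case and your dihedral case for non-minimal $X$ are the paper's arguments. The dihedral case on $\mathbf{F}_d$, $d\ge 1$, however, is not proved, and the route you sketch cannot be completed.

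You correctly note that the kernel $K$ of $G\to G_1$ is a normal subgroup of order $q>2$ in $D_{2n}$ ($2n=q^2$), hence $K=\langle\tau^{q/2}\rangle$. But then $G_1\cong D_{2n}/\langle\tau^{q/2}\rangle$ is \emph{always} dihedral of order $q$ (the Klein four group when $q=4$). Such a group has no fixed point on $\mathbf{P}^1$. So your subcase ``$G_1$ cyclic'' never occurs, and there is never a $G$-stable fiber. In fact $G$ has no fixed point on $X$ at all. Your proposed ``pseudo-reflection argument at a $G$-fixed point'' therefore has no point to work at, and Lemma~\ref{lem: stabilizer-abelian-at-node} cannot be applied to $G$ itself.

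The missing idea is to localize at a point fixed by a subgroup rather than by all of $G$.
\begin{itemize}
\item The reflection $\sigma$ acts on $C_0\cong\mathbf{P}^1$, so it fixes some $Q\in C_0$.
\item The stabilizer $G_Q$ preserves $C_0$ and the fiber $F_Q$, since $G$ preserves the ruling.
\item $G_Q$ contains $K$, because $K$ is exactly the pointwise stabilizer of the section $C_0$.
\item Applying Lemma~\ref{lem: stabilizer-abelian-at-node} to $G_Q$ makes $\langle\sigma,K\rangle$ abelian.
\item But $\sigma\tau^{q/2}\sigma^{-1}=\tau^{-q/2}\neq\tau^{q/2}$, because $\tau$ has order $n=q^2/2>q$. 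This is a contradiction.
\end{itemize}
Equivalently: $K$ acts faithfully on the line $T_QF_Q$ through a character $\chi$, and conjugation by $\sigma$ forces $\chi(k)^2=1$ for all $k\in K$, which is impossible since $|K|=q>2$.

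This is what the paper does, with one difference. It does not use normality of $K$. Instead it uses the list of abelian subgroups in Lemma~\ref{lem: being cyc or dih}(2ii) to reduce to $q=4$ with $\sigma$ acting trivially on $C_0$. It then applies Lemma~\ref{lem: stabilizer-abelian-at-node} at a $\tau$-fixed point of $C_0$, which in that situation is $G$-fixed.
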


\begin{proof}
Note that $\ed(f)\geq 1$. Assume that $\ed(f)=1$. We proceed in the following steps to deduce a contradiction. Let $G$ be the Galois group of $f$.

 By Proposition~\ref{prop: ed1 descend to P1}, after a rational base change, $f$ descends to a Galois endomorphism of $\mathbf{P}^1$ whose Galois group is isomorphic to $G$. By Lemma~\ref{lem: being cyc or dih}, the group $G$ is a cyclic group or a dihedral group.
 \begin{claim}\label{claim: cyclic group ruled surface}
 $G$ is a cyclic group.
 \end{claim}
\begin{proof}[Proof of Claim~\ref{claim: cyclic group ruled surface}]
 Assume that $G$ is a dihedral group. We will deduce a contradiction.

 We first treat the case when $X\cong \mathbf{F}_d$ for some $d\geq 1$.
 Let $C_0$ be the unique negative section of $\mathbf{F}_d$, and let $\pi\colon X\to \mathbf{P}^1$ be the ruling. Then $f^{-1}(C_0)=C_0$, $C_0$ is $G$-stable and $G$ preserves the ruling $\pi$. We may write $f^*(C_0)=q'C_0$. Since $f$ is $q$-polarized,
 we have $q'=q$. Then, by the projection formula, $\deg(f|_{C_0})=q$. Set $H:=\Ker(G\to G|_{C_0}).$ Then $|H|=q$. We use the notation $G = D_{2n} = \langle \sigma, \tau \rangle$ in Lemma \ref{lem: being cyc or dih} where $2n = q^2$. Since $C_0$ is $\sigma$-stable, $\sigma$ fixes some point $Q \in C_0$.
 Note that $G_Q$ stabilizes $C_0$ and the fiber $F_Q$ through the point $Q$, so $G_Q$ is abelian by Lemma~\ref{lem: stabilizer-abelian-at-node}. Thus $G = D_{2n}$ (with $n > 2$) contains an abelian subgroup $G_Q$ ($\ge \langle \sigma, H \rangle$) of order $\ge q > 2$.
 By the classification of abelian subgroups of $G$ in Lemma \ref{lem: being cyc or dih}, and since $q > 2$ and $\sigma \not\in \langle \tau \rangle$, we have 
 $H = \langle \sigma, H \rangle = \langle \sigma, \tau^{n/2} \rangle \cong \mathbb{Z}/2\mathbb{Z} \times \mathbb{Z}/2\mathbb{Z}$, and hence $q = 4, n = 8$. Since $\tau$ acts on $C_0$, it has a fixed point $Q' \in C_0$. Moreover, since $\sigma \in H$, the element $\sigma$ fixes $C_0$ pointwise. In particular, $\sigma(Q')=Q'$. Hence $G = D_{2n} = \langle \sigma,\tau \rangle \subseteq G_{Q'} \subseteq G$ which stabilizes $C_0$ and the fiber $F_{Q'}$ through $Q'$.  Lemma \ref{lem: stabilizer-abelian-at-node} implies that $G = G_{Q'}$ is abelian. This is absurd.

 Now we treat the case where $X$ is not $\mathbf{P}^2$ or $\mathbf{F}_d$ for any $d\geq 0$. So $X$ is the blow-up of some ${\bf F}_d$. By Lemma~\ref{lem: no-isolated-minus-one}, the reduced divisor $D$ consisting of all negative curves is connected and reducible. Hence, there exist two intersecting negative curves $D_1$, $D_2$ such that $D_1\cap D_2= \{Q\}$ for some point $Q\in X$ (cf.~Lemma~\ref{lem: no-isolated-minus-one}). Then $Q$ is fixed by $G$ since each $D_j$ is $G$-stable by Lemma~\ref{lem: finite neg curve}. Hence $G$ is abelian by Lemma~\ref{lem: stabilizer-abelian-at-node}. This contradicts the fact that a dihedral group of order $>4$ is not abelian. This finishes the proof of the claim.
\end{proof}

By Lemma~\ref{lem:relative-minimal}, $X\cong \mathbf{F}_d$ for some $d\geq 1$. Let $C$ be the unique negative section of $\mathbf{F}_d$. Since $C$ is the only negative curve on $X$, we have $f^{-1}(C)=C$. By Lemma~\ref{lem: pullback is self}, $C$ is an elliptic curve, a contradiction. This completes the proof.
\end{proof}

Now, we are ready to prove our main result for rational surfaces.
\begin{proof}[Proof of Theorem~\ref{Thm: main}]
 In the case where $X\ncong \mathbf{P}^1\times\mathbf{P}^1$, this follows from Proposition~\ref{prop: ed for Galois for projective plane} and Theorem \ref{thm:ed-two}. In the case where $X\cong \mathbf{P}^1\times\mathbf{P}^1$, it follows from Theorem~\ref{thm:ed of F_0}.
\end{proof}

\section{Polarized endomorphisms of irrational ruled surfaces}\label{sect: irrat surf}

In this section, we deal with polarized endomorphisms on smooth projective surfaces $X$ that are birationally ruled and irrational. By Lemma~\ref{lem: being ell ruled}, $X$ is a relatively minimal ruled surface over an elliptic curve $C$. So
$X=\mathrm{Proj}_C(\mathcal{E})$ for some rank two vector bundle
$\mathcal{E}$.

\begin{lemma}

\label{lem:p1-fixed-points-finite-order}
Let $q>2$ be an integer and let $K$ be a field of characteristic $0$ containing all $q$-th roots of unity. Let $h\in \operatorname{PGL}_2(K)$ be an element of order $q$. Then $h$ has two distinct $K$-rational fixed points on $\mathbf P^1_K$.

\end{lemma}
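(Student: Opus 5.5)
Let $A\in\GL_2(K)$ be a lift of $h$. Then $h^q=1$ in $\PGL_2(K)$ gives $A^q=c\cdot I$ for some $c\in K^\times$. The fixed points of $h$ on $\mathbf P^1_K$ are exactly the eigenlines of $A$ defined over $K$. So it suffices to show two things: $A$ has two distinct eigenvalues, and both lie in $K$.

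\textbf{Distinct eigenvalues.} Pass to an algebraic closure $\bar K$ and let $\alpha,\beta$ be the eigenvalues of $A$.
\begin{itemize}
\item If $\alpha=\beta$, then $A$ is not scalar, since $h\neq 1$ (because $q>2$). Hence $A=\alpha(I+N)$ with $N\neq 0$ nilpotent.
\item Then $A^q=\alpha^q(I+qN)$, and in characteristic $0$ this is not scalar, contradicting $A^q=cI$.
\end{itemize}
Therefore $\alpha\neq\beta$, $A$ is diagonalizable over $\bar K$, and $h$ has exactly two fixed points over $\bar K$. Moreover $\alpha^q=\beta^q=c$, so $\zeta:=\alpha/\beta$ is a $q$-th root of unity. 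Since $h$ has exact order $q$ in $\PGL_2$, the ratio $\zeta$ is in fact a primitive $q$-th root of unity, and by hypothesis $\zeta\in K$.

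\textbf{Rationality of the eigenvalues.} This is the main point. The trace and determinant satisfy $t=\operatorname{tr}A=\beta(1+\zeta)\in K$ and $d=\det A=\beta^2\zeta\in K$.
\begin{itemize}
\item Suppose $1+\zeta\neq 0$. Then $\beta=t/(1+\zeta)\in K$, and hence $\alpha=\zeta\beta\in K$.
\item The case $1+\zeta=0$ means $\zeta=-1$, which forces $q=2$. This is excluded by the hypothesis $q>2$, and this is exactly where that hypothesis is used.
\end{itemize}
The potential obstacle is the case $t=0$, but it cannot occur: $t=0$ would force $\zeta=-1$. For comparison, when $q=2$ the matrix $\begin{pmatrix}0&1\\ a&0\end{pmatrix}$ with $a$ a non-square shows that the lemma genuinely fails.

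\textbf{Conclusion.} Since $\alpha,\beta\in K$ are distinct, the eigenspaces $\ker(A-\alpha I)$ and $\ker(A-\beta I)$ are lines defined over $K$. They give two distinct $K$-rational fixed points of $h$ on $\mathbf P^1_K$, as required.
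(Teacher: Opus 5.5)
Your proof is correct. It differs from the paper's at the rationality step, where your argument is more elementary.

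For diagonalizability, the paper notes that the minimal polynomial of the lift divides the separable polynomial $T^q-a$. You instead rule out a nontrivial Jordan block using $(I+N)^q=I+qN$. Both are the same use of characteristic $0$.

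For rationality, the paper uses the absolute Galois group $\Gamma_K$, which permutes the two eigenlines. A swap would send $\zeta=\lambda_1/\lambda_2$ to $\zeta^{-1}$, which is impossible because $\zeta\in K$ and $\zeta^2\neq 1$. The paper then invokes Galois descent to get two $K$-rational lines. You instead solve explicitly: $\beta=\operatorname{tr}(A)/(1+\zeta)$ and $\alpha=\zeta\beta$. So both eigenvalues lie in $K$, and the eigenlines are kernels of matrices over $K$; no descent is needed.

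The hypothesis $q>2$ enters at the same point in both arguments: $\zeta\neq\zeta^{-1}$ in the paper, $1+\zeta\neq 0$ for you. Your version constructs the fixed points and makes the role of $q>2$ very visible. The paper's version explains conceptually that an eigenline swap is the only possible obstruction to rationality.

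You also state explicitly that the two eigenvalues are distinct, which the paper leaves implicit when it speaks of "the two eigenspaces."
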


\begin{proof}
Choose a lift $h'\in \operatorname{GL}_2(K)$ of $h$. Since $h^q=1$ in $\operatorname{PGL}_2(K)$, there exists some $a\in K^*$ such that $h'^q=aI$. Hence, the minimal polynomial of $h'$ divides $T^q-a$, which has no repeated roots over $\overline K$. Thus, $h'$ is diagonalizable over $\overline K$.

Let $\ell_1,\ell_2\subset \overline K^2$ be the two eigenspaces of $h'$, with eigenvalues $\lambda_1,\lambda_2\in \overline K^*$. Since $h$ has order $q$, the ratio $\zeta:=\lambda_1/\lambda_2$ is a primitive $q$-th root of unity. By assumption, $\zeta\in K$.

Let $\Gamma_K=\operatorname{Gal}(\overline K/K)$. Since $h'$ is defined over $K$, the set of two eigenspaces $\{K\cdot\ell_1,K\cdot\ell_2\}$ is stable under $\Gamma_K$. Suppose that some $\sigma\in \Gamma_K$ exchanges $K\cdot\ell_1$ and $K\cdot\ell_2$. Then $\sigma(\lambda_1)=\lambda_2$ and $\sigma(\lambda_2)=\lambda_1$, so
$$
\sigma(\zeta)
=\sigma(\lambda_1/\lambda_2)
=\lambda_2/\lambda_1
=\zeta^{-1}.
$$
Since $\zeta\in K$, $\sigma(\zeta)=\zeta$. Thus $\zeta=\zeta^{-1}$, and it follows that $\zeta^2=1$, contradicting that $\zeta$ is a primitive $q$-th root of unity with $q>2$.

Therefore, every element of $\Gamma_K$ fixes both $K\cdot\ell_1$ and $K\cdot\ell_2$. By Galois descent for one-dimensional subspaces of $\overline K^2$, each $K\cdot\ell_i$ is defined over $K$. Hence, the two points of $\mathbf P^1_{\overline K}$ corresponding to $K\cdot \ell_1$ and $K\cdot \ell_2$ are distinct $K$-rational fixed points of $h$.
\end{proof}

\begin{remark}\label{rem:p1-order-two-obstruction}
The hypothesis that $q>2$ is necessary. For any $a\in K^*\setminus K^{*2}$, the matrix
$$
\begin{pmatrix}
0 & a\\
1 & 0
\end{pmatrix}
$$
has order $2$ in $\operatorname{PGL}_2(K)$, but its fixed points are defined over $K(\sqrt a)$ and not over $K$.
\end{remark}

\begin{lemma}\label{lem: fixed locus disjoint}
    Let $\pi:X\to C$ be a ruled surface over an elliptic curve $C$ such that there exists an element $h\in \mathrm{Aut}(X)$ of order $q>2$ with $\pi\circ h=\pi$. Then the fixed locus of $h$ is a disjoint union of two sections of $\pi$. In particular, $\pi_*\mathcal{O}_X(1)$ splits.
\end{lemma}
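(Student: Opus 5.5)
The plan is to work over the generic point of $C$ and then spread the result out. Let $\eta$ be the generic point of $C$ and $K={\bf k}(C)$. The generic fiber $X_\eta$ is a smooth conic over $K$. It has a $K$-point, coming from any section of $\pi$, which exists by Tsen's theorem, so $X_\eta\cong \mathbf{P}^1_K$. Since $\pi\circ h=\pi$, the automorphism $h$ induces an automorphism $h_\eta\in \PGL_2(K)$. This element has order exactly $q$: if $h_\eta^m=\id$, then $h^m$ agrees with the identity on a dense open set, so $h^m=\id$. Because ${\bf k}\subset K$ is algebraically closed, $K$ contains all $q$-th roots of unity. Lemma~\ref{lem:p1-fixed-points-finite-order} then gives two distinct $K$-rational fixed points of $h_\eta$.

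Next I would take the closures of these two points in $X$. This produces two sections $\sigma_1,\sigma_2$ of $\pi$; they are sections because $C$ is a smooth curve and $\pi$ is proper. Both are pointwise fixed by $h$, since they are closures of fixed points. It remains to show that $\Fix(h)=\sigma_1\cup\sigma_2$ and that the two sections are disjoint. For this I look at a closed point $c\in C$. The automorphism $h$ restricts to $h_c$ on the fiber $F_c\cong\mathbf{P}^1$, and $h_c\in\PGL_2({\bf k})$ has finite order dividing $q$. The key step is to show that $h_c$ has order exactly $q$ for every $c$, so that $h_c\neq \id$ and $h_c$ has exactly two fixed points.

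I would prove this with the standard group-theoretic description. For each $m$ with $0<m<q$, the locus $\{c : h_c^m=\id\}$ is closed in $C$ and is not all of $C$. Suppose it contained a point $c$, so that $h^m$ fixes $F_c$ pointwise. Apply Lemma~\ref{lem: reductive faithful act} to the cyclic group $\langle h^m\rangle$ at a point $x\in F_c$. The tangent action is faithful and trivial on $T_xF_c$. Since $\pi\circ h^m=\pi$, the induced action on $T_xX/T_xF_c\cong T_cC$ is also trivial. A finite-order linear map that is unipotent is the identity, so $h^m$ acts trivially on $T_xX$. Faithfulness then forces $h^m=\id$, a contradiction. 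Hence $h_c$ has order $q>2$ for every $c$, and $\Fix(h_c)$ consists of exactly two points. Because $\sigma_1(c)$ and $\sigma_2(c)$ are both fixed by $h_c$, we get $\Fix(h)\cap F_c\supseteq\{\sigma_1(c),\sigma_2(c)\}$.

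The main obstacle is to rule out $\sigma_1(c)=\sigma_2(c)$ at some point $c$, that is, to show the two sections do not meet. I would handle this with the local computation at a hypothetical intersection point $x=\sigma_1(c)=\sigma_2(c)$. Here $h$ fixes $x$ and acts on $T_xX$ faithfully, by Lemma~\ref{lem: reductive faithful act}, with finite order, so the action is diagonalizable. It fixes the distinct tangent lines $T_x\sigma_1$ and $T_x\sigma_2$ pointwise if they are distinct; if they coincide, I compare instead with $T_xF_c$, which is transverse to them. In either case $h$ acts trivially on two independent directions, which forces $h=\id$. If $\sigma_1$ and $\sigma_2$ are tangent at $x$, I would use the eigenvalue on $T_xF_c$. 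This eigenvalue is a primitive $q$-th root of unity $\zeta$ because $h_c$ has order $q$. The eigenvalue on $T_x\sigma_i$ is $1$, and the two are transverse. Then $x$ is an isolated fixed point of $h_c$ with multiplier $\zeta\neq1$, so $h_c$ has a simple fixed point at $x$. Two fixed curves of $h$ through $x$ transverse to the fiber would each give a fixed point of $h_c$ near $x$ in neighbouring fibers, and these points would coalesce at $x$; by continuity of the fixed-point scheme, this contradicts the simplicity of the fixed point. Therefore $\Fix(h)=\sigma_1\sqcup\sigma_2$.

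Finally, two disjoint sections of a $\mathbf{P}^1$-bundle split the bundle. They correspond to two rank-one quotients $\pi_*\mathcal{O}_X(1)\to L_i$ whose kernels are complementary everywhere, which gives $\pi_*\mathcal{O}_X(1)\cong L_1\oplus L_2$.
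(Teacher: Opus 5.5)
Your proof is correct and follows the paper's route. You pass to the generic fiber $X_\eta\cong\mathbf{P}^1_K$, apply Lemma~\ref{lem:p1-fixed-points-finite-order} to get two $h$-fixed sections, and use faithfulness of the tangent representation (Lemma~\ref{lem: reductive faithful act}) to control $\Fix(h)$ fiber by fiber. Where the paper simply cites smoothness of $\Fix(h)$ for disjointness, your argument that the multiplier on $T_xF_c$ is $\zeta\neq 1$ is a local proof of that smoothness: it leaves a single smooth fixed branch through $x$ transverse to $F_c$, and it covers both the transverse and tangent cases at once. One slip: $h$ does \emph{not} act trivially on $T_xF_c$, so delete the sentence ``in either case $h$ acts trivially on two independent directions'' and rely on the multiplier argument instead.
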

\begin{proof}
    Let $\eta=\mathrm{Spec}K(C)$ be the generic point of $C$. Restricting $h$ to the generic fiber $X_\eta$ gives a $K(C)$-automorphism $h_\eta:X_\eta\to X_\eta$. Since $X\to C$ is a ruled surface, $X_\eta\cong \mathbf{P}_{K(C)}^1$. Since ord$(h)=q>2$, ord$(h_\eta)>2$. Therefore, $h_\eta$ has two distinct fixed $K(C)$-points by Lemma~\ref{lem:p1-fixed-points-finite-order}, which give two rational sections $C\dashrightarrow X $. Since $C$ is a smooth projective curve and $X\to C$ is proper, each rational section extends uniquely to a section. Thus, we obtain two distinct sections $C_1,C_2\subset X$ that are $h$-stable, hence fixed by $h$ pointwise.
    
    Note that $C_1,C_2\subset \Fix(h)$. Each general fiber of $\pi$ has exactly two $h$-fixed points. Thus $\Fix(h)$ is a disjoint union of $C_1$ and $C_2$ by Lemma \ref{lem: reductive faithful act} and the fact that $\Fix(h)$ is smooth. 
\end{proof}

\begin{corollary}\label{cor: split vector bundle}
    Let $X=\mathrm{Proj}_C(\mathcal{E})$ be a ruled surface over an elliptic curve $C$, with $\mathcal{E} $ a rank two vector bundle on $C$. Let $f: X\to X$ be a $q$-polarized Galois endomorphism such that $q>2$ and the Galois group $G$ is either cyclic or dihedral. Then $\mathcal{E}$ splits.
\end{corollary}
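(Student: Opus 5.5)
The plan is to produce a nontrivial element $h \in G$ of order $>2$ that acts trivially on the base curve $C$, and then apply Lemma~\ref{lem: fixed locus disjoint}. By Lemma~\ref{lem: being ell ruled}, $f$ descends to a $q$-polarized endomorphism $f_C$ of $C$ with $\deg f_C = q$. The ruling $\pi: X \to C$ is the Albanese map, so every $g \in G$ commutes with $\pi$ up to an automorphism of $C$. This gives a homomorphism $\rho: G \to \mathrm{Aut}(C)$, and $f_C$ factors through $C \to C/\rho(G)$. Let $H := \Ker(\rho)$. Then $H$ consists of automorphisms of $X$ over $C$, and $H$ acts faithfully on the generic fibre $X_\eta \cong \mathbf{P}^1_{K(C)}$.

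Next we bound $|H|$ from below by degree counting. The map $X \to X/G = X$ factors as $X \to X/H \to X$. On the base, $f_C$ has degree $q$ and is the composite $C \to C/\rho(G) \to C$, so $|\rho(G)| \le q$. Indeed, $\rho(G)$ acts on $C$ with $C/\rho(G)$ dominated by $C$ and mapping to $C$ with total degree $q$. Hence $|H| = |G|/|\rho(G)| \ge q^2/q = q > 2$.

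The main step is to find an element of order $>2$ inside $H$. If $G$ is cyclic, then $H$ is cyclic of order $\ge q > 2$, so a generator $h$ of $H$ has order $>2$. If $G = D_{2n}$ with $2n = q^2$, then by Lemma~\ref{lem: being cyc or dih}(2ii) every subgroup of order $\ge 3$ is either cyclic (and then contains an element of order $|H| > 2$) or non-abelian. In the non-abelian case $H$ is itself dihedral, $\langle \tau^r, \sigma\tau^s \rangle$, so it contains a rotation $\tau^r$. That rotation has order $>2$ unless $H \cong (\mathbb{Z}/2\mathbb{Z})^2$, i.e.\ $|H| = 4$.

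The case I expect to be the obstacle is $H \cong (\mathbb{Z}/2\mathbb{Z})^2$ with $q \le 4$, so $q \in \{3,4\}$; since $q^2 = 2n$ is even, this means $q = 4$. Here I would first try to sharpen the counting: $|\rho(G)| \le q$ should be an equality, because the quotient by $H$ is a $\mathbf{P}^1$-bundle-type cover of degree $|H|$ on fibres. The fibres of $\pi$ map with degree $q$ onto fibres, since $f^*F \equiv qF$ and the projection formula give $\deg f|_F = q$. So $|H| = q = 4$ and $\rho(G)$ has order $4$ acting on $C$ with $C/\rho(G)$ elliptic, forcing $\rho(G)$ to be a translation group, hence abelian. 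Then $G/H$ is abelian, so $H \supseteq [G,G] = \langle \tau^2 \rangle$, which has order $n/2 = 4$. But $\langle \tau^2 \rangle$ is cyclic of order $4$, contradicting $H \cong (\mathbb{Z}/2\mathbb{Z})^2$. In all cases we therefore obtain $h \in H$ of order $>2$ with $\pi \circ h = \pi$. Lemma~\ref{lem: fixed locus disjoint} then gives two disjoint sections, so $\pi_*\mathcal{O}_X(1) = \mathcal{E}$ splits, possibly after twisting by a line bundle.
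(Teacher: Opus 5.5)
Your argument is correct and is essentially the paper's. Both take $H=\Ker(G\to G|_C)$, find an element of order $>2$ in $H$, and apply Lemma~\ref{lem: fixed locus disjoint}; in the dihedral case both use that $G|_C$ is a group of translations, so $H\supseteq[G,G]=\langle\tau^2\rangle$. The paper uses this up front to get $|H|=q$, $q=4$ and $H=\langle\tau^2\rangle$ cyclic, whereas you only invoke it to exclude $H\cong(\mathbb{Z}/2\mathbb{Z})^2$; in that case $|\rho(G)|=16/4=q$ is automatic, so your fibre-degree sharpening is not needed.

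One wording fix: $(\mathbb{Z}/2\mathbb{Z})^2$ is abelian, so the correct dichotomy is ``cyclic or of the form $\langle\tau^r,\sigma\tau^s\rangle$''. This is a standard fact about subgroups of $D_{2n}$, not a consequence of Lemma~\ref{lem: being cyc or dih}(2ii), which only lists the abelian subgroups.
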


\begin{proof}
    Let $\pi: X\to C$ be the Albanese morphism, which is just the ruling of $X$. By Lemma~\ref{lem: being ell ruled}, $f$ descends to a $q$-polarized endomorphism $f|_C$ on $C$. 
We have an induced action $G|_C$ on $C$ which is abelian of order equal to $\deg f|_C = q > 2$. Let $H :=\Ker(G \to G|_C)$. Then $|H| = q$.

If $G$ is a cyclic group, then $H$ is cyclic. If $G$ is a dihedral group, by Lemma~\ref{lem: being cyc or dih}, we have $q=4$, $G=D_{16}$, and $H$ is a cyclic group of order $4 = q$. 

Thus, by Lemma~\ref{lem: fixed locus disjoint}, $X$ has two disjoint sections. This implies that $\mathcal{E}$ splits.
\end{proof}

To study polarized Galois covers of elliptic ruled surfaces in detail, we need the following.

\begin{lemma}\label{lem: no dihedral Galois cover for elliptic ruled}
    Let $\pi:X\to C$ be a ruled surface over an elliptic curve. Then there is no $q$-polarized Galois endomorphism $f:X\to X$ with $q>2$ whose Galois group is a dihedral group.
\end{lemma}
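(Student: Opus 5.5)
The plan is to pin down the group-theoretic shape of the action, then get a numerical contradiction by intersecting with a fiber.

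\textbf{Step 1: reduce to $q=4$ and $G=D_{16}$.} Suppose $f$ is Galois with $G=\Gal(f)\cong D_{2n}$, $2n=q^2$, $q>2$. Write $G=\langle\sigma,\tau\rangle$ as in Lemma~\ref{lem: being cyc or dih}. By Lemma~\ref{lem: being ell ruled}, $f$ descends to a $q$-polarized endomorphism $f|_C$ of the elliptic curve $C$. Hence $G$ acts on $C$ through an abelian quotient $G|_C$ of order $q$, with kernel $H$ of order $q$.

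Since $2n=q^2$, $q$ is even and so is $n$. As $G|_C$ is abelian, $H\supseteq[G,G]=\langle\tau^2\rangle$, which has order $n/2=q^2/4$. Thus $q^2/4\le q$, so $q=4$, $n=8$, and $H=\langle\tau^2\rangle\cong\mathbb Z/4\mathbb Z$.

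Moreover, $C/G|_C\cong C$ is elliptic, so $G|_C$ acts on $C$ by translations. It follows that the only elements of $G$ fixing curves pointwise lie in $H$.

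\textbf{Step 2: the two invariant sections and how $G$ permutes them.} Apply Lemma~\ref{lem: fixed locus disjoint} to $h=\tau^2$, which has order $4>2$ and acts fiberwise. This gives $\Fix(\tau^2)=C_1\sqcup C_2$, two disjoint sections. Since $H$ is normal, $G$ permutes $\{C_1,C_2\}$.

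The normal representation of $H$ along $C_1$ is faithful by Lemma~\ref{lem: reductive faithful act}: on the tangent space at a point of $C_1$, $H$ acts trivially on the $C_1$-direction. In a fiber coordinate $z$ with $C_1=\{z=0\}$ and $C_2=\{z=\infty\}$, $\tau^2$ acts by $z\mapsto \zeta z$ near $C_1$ and by $z^{-1}\mapsto\zeta^{-1}z^{-1}$ near $C_2$, where $\zeta$ is a primitive $4$th root of unity.

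Now compare normal characters of $g\tau^2g^{-1}$ and of $\tau^2$ along $C_1$:
\begin{itemize}
\item if $g\in G$ preserves each $C_i$, then $g\tau^2g^{-1}=\tau^2$;
\item if $g$ swaps $C_1$ and $C_2$, then $g\tau^2g^{-1}=\tau^{-2}$.
\end{itemize}
Since $\sigma\tau^2\sigma^{-1}=\tau^{-2}\neq\tau^2$, the element $\sigma$ swaps $C_1$ and $C_2$. In particular $C_1,C_2$ lie in one $G$-orbit, so $f(C_1)=f(C_2)=:B$ and $f^{-1}(B)=C_1\cup C_2$.

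\textbf{Step 3: numerical contradiction.} The pointwise stabilizer of each $C_i$ is exactly $H$, of order $4$, by the translation remark in Step 1. By Definition~\ref{defn: fixed locus pointwise}, the ramification index along each $C_i$ is $4$, so
$$f^*B=4C_1+4C_2 .$$
On the other hand, $f$ commutes with $\pi$ and $f|_C$ has degree $4$, so for a fiber $F$ we have $f^*F\equiv 4F$.

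The projection formula gives
$$f^*B\cdot f^*F=\deg(f)\,(B\cdot F)=16,$$
hence $f^*B\cdot F=4$. But $(4C_1+4C_2)\cdot F=8$, since each $C_i$ is a section. This contradiction proves the lemma.

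\textbf{Main obstacle.} The delicate step is Step 2: showing that $\sigma$ really exchanges $C_1$ and $C_2$ rather than preserving them. This rests on the faithfulness of the normal character of $H$ along the fixed sections, and on correctly matching the characters at $C_1$ and $C_2$ under conjugation. I would check this carefully in local coordinates near a point of $C_1$ and its image.
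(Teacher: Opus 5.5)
Your Steps 1 and 2 are fine and coincide with the opening of the paper's proof. They give the reduction to $q=4$, $G=D_{16}$, $H=\langle\tau^2\rangle$, the decomposition $\Fix(\tau^2)=C_1\sqcup C_2$, and the normal-character argument showing that $\sigma$ swaps $C_1$ and $C_2$.

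\textbf{The gap is in Step 3.} The equality $f^*B\cdot f^*F=\deg(f)\,(B\cdot F)=16$ silently assumes $B\cdot F=1$, i.e.\ that $B=f(C_1)$ is a section. This is false:
\begin{itemize}
\item The setwise stabilizer of $C_1$ has index $2$, since the orbit is $\{C_1,C_2\}$, so it has order $8$.
\item By your own Step 1, its kernel on $C_1$ is exactly $H$. Hence $f|_{C_1}\colon C_1\to B$ has degree $2$.
\item The composite $\pi\circ f|_{C_1}=f|_C\circ\pi|_{C_1}$ has degree $4$. So $\pi|_B$ has degree $2$, and $B\cdot F=2$.
\end{itemize}
The projection formula then gives $f^*B\cdot F=16\cdot 2/4=8=(4C_1+4C_2)\cdot F$, which is no contradiction. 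In fact the computation is a tautology: $f^*B\cdot F=q\,(B\cdot F)$ holds for every curve $B$, so it only recovers $\deg(\pi|_B)$ from the ramification data. The other numerical invariants are consistent too ($C_1^2=C_2^2=0$, $B^2=0$, $B\equiv 2C_0$, $p_a(B)=1$). So no intersection count of this type will close the argument.

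\textbf{What is missing} is a genuinely geometric use of the residual $G/H\cong(\mathbb{Z}/2\mathbb{Z})^2$-action. The paper proceeds as follows.
\begin{itemize}
\item Because $\sigma$ swaps the two disjoint sections, $C_1^2=C_2^2=0$, and $X=\Proj_C(\mathcal{O}_C\oplus\mathcal{L})$ with $\mathcal{L}$ $2$-torsion.
\item Let $\rho\colon X\to Y=X/H$. The disjoint sections $D_i=\rho(C_i)$ have normal bundles $\mathcal{N}_{C_i/X}^{\otimes 4}\cong\mathcal{O}$. Hence $Y\cong C\times\mathbf{P}^1$ with $D_i=C\times\{q_i\}$, and $G/H$ acts factorwise.
\item The preimage in $Y$ of a section of $X/G\cong X$ with self-intersection $0$ is a $G/H$-stable curve $C\times\{p\}$. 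So $p$ is fixed by both $\bar\tau|_{\mathbf{P}^1}$ and $\bar\sigma|_{\mathbf{P}^1}$.
\end{itemize}
Your normal-character rule already shows that $\tau$ preserves each $C_i$, since $\tau$ commutes with $\tau^2$. One still needs $\bar\tau|_{\mathbf{P}^1}\neq\id$. The paper gets this from a function $u$ with $\operatorname{div}(u)=2(C_1-C_2)$, $h^*u=-u$ and $\tau^*u=au$, $a^2=-1$. Then $s$ with $\rho^*s=u^2$ satisfies $\bar\tau^*s=-s$. It follows that $p\in\Fix(\bar\tau|_{\mathbf{P}^1})=\{q_1,q_2\}$, while $\bar\sigma$ swaps $q_1$ and $q_2$. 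This is the actual contradiction.
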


\begin{proof}
The proof is by contradiction. Assume that such an endomorphism $f:X\to X$ exists, and denote its dihedral Galois group by $G$. As in Corollary \ref{cor: split vector bundle}, $f$ descends to a $q$-polarized endomorphism $f|_C$ on $C$, we have an induced action $G|_C$ on $C$ which is abelian of order equal to $\deg f|_C = q = 4$, and $H:=\Ker(G\to G|_C)=\langle h\rangle$ is a cyclic group of order $q = 4$. In addition,
$$G=D_{16}:=\langle \tau,\sigma\mid
\tau^8=\sigma^2=1,~~\sigma\tau\sigma=\tau^{-1}\rangle,$$ $G|_C\cong G/H = \langle \bar{\tau}, \bar{\sigma}\rangle \cong \mathbb{Z}/2\mathbb{Z}\times \mathbb{Z}/2\mathbb{Z}$ and $H=\langle h = \tau^2\rangle$.
Here for $g \in G$, we let $\bar{g}$ be its image in $G/H$.

By Lemma~\ref{lem: fixed locus disjoint}, we may write $\Fix(h)=C_1\cup C_2$, with $C_1,C_2$ two disjoint sections. Since $h$ fixes $C_i$ pointwise, it induces an automorphism of normal bundles $\mathcal{N}_{C_i/X} $ on $C_i$. Because $C_i$ is projective and connected, every automorphism of the line bundle is multiplication by a scalar. Hence, there are $\lambda_i\in \mathbf{k}^*$ such that $h|_{\mathcal{N}_{C_i/X}}=\lambda_i \mathrm{id}$. Take a closed point $P\in C_i$, and let $F_P$ be the fiber of $\pi$ through $P$. Since $T_PX=T_PC_i\oplus T_PF_P$ and $h$ acts trivially on $C_i$, we have $h|_{T_PX}=h|_{T_P C_i}\oplus h|_{T_P F_P}=\mathrm{id}\oplus \lambda_i \mathrm{id}$. By Lemma~\ref{lem: reductive faithful act}, the tangent representation is faithful. Hence $\lambda_i$ is a primitive $4$-th root of unity. 

Since $\sigma h\sigma=h^{-1}$, the involution $\sigma$ either preserves both $C_j$ of $\Fix(h)$ or swaps them. If $\sigma(C_j) = C_j$ ($j = 1, 2$), the equality $\sigma h\sigma=h^{-1}$ implies $\lambda_i=\lambda_i^{-1}$ which is a contradiction. Hence, $C_1,C_2$ are swapped by $\sigma$, and their self-intersection numbers are the same. It follows that $X=\Proj_C(\mathcal{O}_C\oplus\mathcal{L})$, with $\mathcal{L}$ a degree $0$ line bundle. Moreover, the normal bundles of the two minimal sections are isomorphic. Thus $\mathcal{L}\cong\mathcal{L}^{-1}$. Therefore, $\mathcal{L}$ is a $2$-torsion line bundle.

Let $\rho: X\to X/H =: Y$ which is smooth because $\Fix (h)$ is a smooth divisor and by Theorem \ref{lem: generated by pref}.  Since $H$ acts trivially on $C$, the quotient $Y=X/H$ admits an induced ruling $Y\to C$. Let 
$$D_i:=\rho(C_i)\subset Y.$$ Since $H$ fixes $C_i$ pointwise, $\rho|_{C_i}$ is an isomorphism and $\rho^*D_i=4C_i$. The projection formula implies that the $D_j$ are sections of $Y \to C$ and disjoint. 
Moreover, restricting to $C_i$, we get the pullback of normal bundles $$\mathcal{N}_{D_i/Y}\cong(\rho|_{C_i})^*\mathcal{N}_{D_i/Y}\cong\mathcal{O}_X(\rho^*D_i)|_{C_i}\cong \mathcal{N}_{C_i/X}^{\otimes4}\cong \mathcal{O}_{C_i}$$ where the last isomorphism holds because $\mathcal{L}$ is a $2$-torsion. Thus we have an identification $Y = C\times\mathbf{P^1}$. 

Let $Z:=Y/(G/H)\cong X$. There is a naturally induced ruling $Z\to C/(G|_C)$ which can be identified as $X\to C$. The ruling $Y \to C$ is obtained from the ruling on $Z$ via the \'etale base change $C \to C/(G|C)$. Our $Z\cong X$ has a section $D$ with $D^2=0$. Pulling back via $Y \to Z$
gives a section $\widetilde D$ of $C\times \mathbf{P}^1\to C$ such that $\widetilde D^2=4 D^2=0$. Hence $\widetilde D=C\times\{p\}$ for some $p\in \mathbf{P}^1$.

Note that each $\bar{g} \in G/H$ (acting on $Y = C \times {\bold P}^1$) induces $\bar{g}|_C$ and $\bar{g}|_{{\bold P}^1}$. Since $C\times\{p\} = \widetilde{D}$ is pulled back via $Y \to Y/(G/H) = Z$, it is $G/H$-stable,
so $p$ is fixed by $(G/H)|_{{\bold P}^1}$. 

Since  $\mathcal{O}_X(C_1-C_2)\cong \pi^*\mathcal{L}^{\pm 1}$ and $\mathcal{L}$ is $2$-torsion, there exists some $u\in k(X)^*$ such that $$\operatorname{div}(u)=2(C_1-C_2).$$ We first prove the claim that
\begin{equation}\label{eq: pull back minus}
    \hskip 1pc h^*u=-u.
\end{equation} Since $h$ fixes $C_1$ and $C_2$ pointwise
and $\operatorname{div}(u)=2(C_1-C_2)$, we have
$\operatorname{div}(h^*u)=\operatorname{div}(u)$. Hence
$h^*u/u\in H^0(X,\mathcal O_X^*)={\bold k}^*$, so, for some $c\in {\bold k}^*$, we have $$h^*u=cu.$$ 
Choose a general point $y\in C$ and put $F:=\pi^{-1}(y)\simeq \mathbf P^1$.
Then $h|_F$ has order $4$. Let $p_i:=C_i\cap F$ for
$i=1,2$. Since $h$ fixes $C_1$ and $C_2$ pointwise, $h|_F$ fixes both
$p_1$ and $p_2$.
Choose $t\in k(F)^*$ such that $\operatorname{div}_F(t)=p_1-p_2$. Then
$\operatorname{div}_F((h|_F)^*t)=p_1-p_2$, hence $(h|_F)^*t=\alpha t$ for some
$\alpha\in \mathbf{k}^*$. Since $h|_F$ has order $4$, $\alpha$ is a primitive fourth
root of unity, so $\alpha^2=-1$.

Restricting $\operatorname{div}(u)=2(C_1-C_2)$ to $F$, we get
$
\operatorname{div}_F(u|_F)=2(p_1-p_2)=\operatorname{div}_F(t^2).
$
Thus $u|_F=bt^2$ for some $b\in {\bold k}^*$. Therefore
$$
(h|_F)^*(u|_F)=(h|_F)^*(bt^2)=b(\alpha t)^2=\alpha^2bt^2=-u|_F.
$$
On the other hand, restricting $h^*u=cu$ to $F$ gives
$h_F^*(u|_F)=c\,u|_F$. Hence $c=-1$, and therefore we have proved the claim $h^*u=-u$.

We continue the proof of the lemma. Note that $\tau$ preserves $\Fix(\tau^2 = h) = C_1 \cup C_2$. Suppose that $\tau$ exchanges $C_1$ and $C_2$. Then
$\operatorname{div}(\tau^*u)=-\operatorname{div}(u)$, so $\tau^*u=a/u$ for
some $a\in \mathbf{k}^*$. Hence $(\tau^2)^*u = u$.
This and $\tau^2=h$ contradict the claim (\ref{eq: pull back minus}) above. Therefore $\tau(C_j) = C_j$ ($j = 1, 2$).

Consequently, $\operatorname{div}(\tau^*u)=\operatorname{div}(u)$, and hence
$\tau^*u=au$ for some $a\in \mathbf{k}^*$. Using again $\tau^2=h$, we get
$a^2u=(\tau^2)^*u=h^*u=-u$. Thus $a^2=-1$.
Since $\rho^*D_1=4C_1$ and $\rho^*D_2=4C_2$, we have
$$
\operatorname{div}(u^2)=4(C_1-C_2)=\rho^*(D_1-D_2).
$$
Also, by the claim \eqref{eq: pull back minus}, $u^2\in K(X)^H=K(Y)$. Hence there is an
$s\in K(Y)^*$ such that $\rho^*s=u^2$. This and the above equality give
$\operatorname{div}_Y(s)=D_1-D_2$. Since $D_i^2=0$, we may write $D_i=C\times\{q_i\}$ ( $i=1,2$). 
So we may regard $s$ as an element of
$K(\mathbf P^1)^* \subset K(C \times {\bold P}^1)^* = K(Y)^*$. Now
$$
\rho^*(\bar\tau^*s)=\tau^*(\rho^*s)=\tau^*(u^2)=a^2u^2=-u^2=\rho^*(-s).
$$
Since $\rho^*$ is injective, $\bar\tau^*s=-s$. Thus $\bar\tau$ acts
nontrivially on the $\mathbf P^1$-factor of $Y = C \times {\bf P}^1$.

Since $\sigma$ exchanges $C_1$ and $C_2$, its image $\bar\sigma \in G/H$ exchanges the $D_j = C \times \{q_j\}$, because
$$
\bar\sigma(D_i)=\bar\sigma(\rho(C_i))=\rho(\sigma(C_i))=D_{3-i}
$$
for $i=1,2$. Hence
\begin{equation}\label{eq: restriction of morphism}
    \hskip 1pc \bar{\sigma}|_{{\bold P}^1} (q_i) = q_{3-i}
\end{equation}
for $i = 1, 2$. On the other hand, since $\bar\tau$ preserves both $D_i$, $\bar{\tau}|_{{\bold P}^1} $ fixes both $q_i$. Since $\bar{\tau}$ acts non-trivially on the $\mathbf P^1$-factor we have $\Fix(\bar{\tau}|_{{\bold P}^1}) = \{q_1, q_2\} \supset \Fix(\bar{\tau}|_{{\bold P}^1}) \cap \Fix(\bar{\sigma}|_{{\bold P}^1})  \supset \{p\}$; here we recall that $\widetilde{D} = C \times \{p\}$ which is $G/H$-stable.
This contradicts the equality \eqref{eq: restriction of morphism} above. \end{proof}

\begin{lemma}\label{lem: a bound for q for splitting bundle}
 Let $X = \mathrm{Proj}_C(\mathcal{O}_C \oplus \mathcal{L})$ be a ruled surface over an elliptic curve $C$, where $\mathcal{L}$ is a line bundle on $C$. Let $f: X\to X$ be a $q$-polarized Galois endomorphism such that the Galois group $G$ is cyclic and $q > 2$. Then the following statements hold.
 \begin{enumerate}
 \item Suppose $\deg \mathcal{L} \neq 0$. Then $q \mid \deg \mathcal{L}$.
 \item Suppose $\deg \mathcal{L} = 0$. Then $\mathcal{L}$ is a torsion line bundle and $q \mid \mathrm{ord}(\mathcal{L})$.
 \end{enumerate}
\end{lemma}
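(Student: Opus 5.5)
The plan is to study the kernel of $G$ acting on the base, and the two sections it fixes pointwise. Write $G=\langle g\rangle$, of order $q^2$, and let $\pi\colon X\to C$ be the ruling. By Lemma~\ref{lem: being ell ruled}, $f$ descends to a $q$-polarized endomorphism $\varphi:=f|_C$ of degree $q$. The induced group $G|_C$ is cyclic of order $q$. Its quotient $C/(G|_C)\cong C$ is again elliptic, so $G|_C$ acts on $C$ by translations, and freely.

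Set $H:=\Ker(G\to G|_C)=\langle h\rangle$ with $h=g^{q}$ of order $q>2$. By Lemma~\ref{lem: fixed locus disjoint}, $\Fix(h)=C_1\sqcup C_2$ with $C_1,C_2$ disjoint sections. Since $G$ is abelian, $g$ preserves $\Fix(h)$, so $g$ either fixes each $C_i$ or swaps them. For $X=\Proj_C(\mathcal{O}_C\oplus\mathcal{L})$ the normal bundles of two disjoint sections are $\mathcal{L}$ and $\mathcal{L}^{-1}$ up to the identification $C_i\cong C$, and $C_1^2=-C_2^2=\pm\deg\mathcal{L}$.

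\emph{Case (1), $\deg\mathcal{L}=d\neq 0$.} Here $C_1^2\ne C_2^2$, so $g$ cannot swap the $C_i$, and $g(C_1)=C_1$. Identify $C_1$ with $C$ via $\pi$. Then $g|_{C_1}$ is a translation $t$ of exact order $q$, since $G|_C$ is cyclic of order $q$ generated by the image of $g$. Because $g$ is an automorphism of $X$ preserving $C_1$, it pulls the normal bundle back to itself:
$$t^*\mathcal{N}_{C_1/X}\cong \mathcal{N}_{C_1/X},\qquad \deg \mathcal{N}_{C_1/X}=\pm d\neq 0 .$$
By the theorem of the square (Mumford), for a line bundle $M$ of degree $e\ne 0$ on an elliptic curve, $t_x^*M\cong M$ holds iff $x\in C[e]$. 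Hence $t$ is a point of order $q$ in $C[d]\cong(\mathbb{Z}/|d|)^2$, so $q\mid d$. This step should be routine.

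\emph{Case (2), $\deg\mathcal{L}=0$.} Torsionness of $\mathcal{L}$ comes from Theorem~\ref{thm: classification of dynamical surface}; let $m=\mathrm{ord}(\mathcal{L})$. Translation-invariance is now automatic, so I would instead imitate the function-theoretic argument of Lemma~\ref{lem: no dihedral Galois cover for elliptic ruled}.

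First choose $u\in K(X)^*$ with $\operatorname{div}(u)=m(C_1-C_2)$. Restricting to a general fibre, where $h$ acts with a primitive $q$-th root of unity $\alpha$ on a coordinate $t$ with $\operatorname{div}(t)=p_1-p_2$, gives $h^*u=\alpha^m u$. Second, pass to $\rho\colon X\to Y:=X/H$. The quotient $Y$ is smooth and is a ruled surface with disjoint sections $D_i=\rho(C_i)$ satisfying $\rho^*D_i=qC_i$, so $\mathcal{N}_{D_i/Y}\cong\mathcal{L}^{\pm q}$. Third, the map $Y\to Y/(G/H)\cong X$ is étale, because $G/H$ acts freely through translations of $C$. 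Hence $Y$ is the base change of $X$ along $\varphi$, giving $\mathcal{L}^{q}\cong\varphi^*\mathcal{L}^{\pm1}$ (the sign depends on whether $g$ swaps the $C_i$).

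To conclude $q\mid m$, I would combine this with the $G$-action on $u$. If $g$ preserves the $C_i$ then $g^*u=\beta u$ with $\beta^q=\alpha^m$. If $g$ swaps them then $g^*u=\beta/u$, and since $q$ is odd or even one computes $(g^q)^*u$ and compares with $h^*u=\alpha^m u$. The hope is that these relations, together with the descent of $u^{q/\gcd(q,m)}$ to $K(Y)$, force $\alpha^m=1$ (that is, $q\mid m$), or else produce a rational function on $Y$ whose divisor exhibits $\mathcal{L}^{q/\gcd}$ in a way contradicting minimality of $m$.

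The main obstacle is exactly this last step: the degree and normal-bundle identities alone appear consistent without divisibility, so the group-theoretic eigenvalue bookkeeping must carry the argument.
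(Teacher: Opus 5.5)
Your part (1) is correct. Using the $g$-stable section $C_1$ and $t^*\mathcal{N}_{C_1/X}\cong\mathcal{N}_{C_1/X}$ is a harmless variant of the paper's Krull--Schmidt step $t_c^*\mathcal{L}\cong\mathcal{L}^{\pm1}$, and both conclude via $K(\cdot)=C[d]$.

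Part (2), however, is not proved. In the swap subcase your sketch does close: $(g^2)^*u=u$, and $q$ must be even because $g^q=h$ fixes each $C_i$, so $h^*u=u$ and $\alpha^m=1$. In the non-swap subcase, though, every relation you list is tautological:
\begin{itemize}
\item $\beta^q=\alpha^m$ is just $h=g^q$ applied to $h^*u=\alpha^m u$;
\item the descent of $u^{q/\gcd(q,m)}$ to $K(Y)=K(X)^H$ is again only the equation $h^*u=\alpha^m u$;
\item as you note yourself, $\mathcal{L}^q\cong\varphi^*\mathcal{L}^{\pm1}$ is compatible with $q\nmid m$.
\end{itemize}
Nothing you have written forces $\beta^q=1$, and that is exactly what is needed. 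You use $X/G\cong X$ only through isomorphism classes of line bundles, which discards the eigenvalue information. What is actually required is that a power of $u$ is the pullback of a rational function from the target of $f$.

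The missing idea is to use $f$ itself. Put $T_i:=f(C_i)$. Since $g$ preserves each $C_i$, you have $f^{-1}(T_i)=C_i$. The ramification index along $C_i$ is the order of its pointwise stabiliser, which is exactly $H$, so $f^*T_i=qC_i$. From this, $C_1\cap C_2=\emptyset$ and the projection formula, $T_1,T_2$ are disjoint sections with $T_i^2=0$. Hence $\mathcal{O}_X(T_1-T_2)\cong\pi^*\mathcal{L}^{\pm1}$, and there is $v\in K(X)^*$ with $\operatorname{div}(v)=m(T_1-T_2)$. Then $\operatorname{div}(f^*v)=qm(C_1-C_2)=\operatorname{div}(u^q)$, so $f^*v=c\,u^q$ for some $c\in\mathbf{k}^*$. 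Now $f\circ g=f$ gives $g^*(u^q)=u^q$, i.e.\ $\beta^q=1$. Therefore $\alpha^m=\beta^q=1$ and $q\mid m$. This is precisely the Claim in the paper's proof, and with it the passage to $Y=X/H$ and the base-change identity become unnecessary.
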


\begin{proof}
By Lemma~\ref{lem: being ell ruled}, $f$ descends to a $q$-polarized morphism $f|_C: C\to C$ via the Albanese map $\pi: X\to C$.
Both $G|_C$ and $H:  =\Ker(G \to G|_C)$ are isomorphic to $\mathbb{Z}/q\mathbb{Z}$. We may assume that $f|_C$ is an isogeny, otherwise, we just compose it with a translation, and the same proof works. We may write $G|_C=\langle t_c\rangle$, where $t_c$ denotes the translation morphism for some $c\in C$.

Note that $\mathrm{Proj}_C(\mathcal{O}_C\oplus t_c^*\mathcal{L})\cong\mathrm{Proj}_C(\mathcal{O}_C\oplus \mathcal{L})$ over $C$. Hence $\mathcal{O}_C\oplus t_c^*\mathcal{L}\cong(\mathcal{O}_C\oplus \mathcal{L})\otimes \mathcal{M}$ for some invertible sheaf $\mathcal{M}$. This and Krull-Schmidt theorem \cite[Part 1, Generalities]{Ati57} imply that $t_c^*\mathcal{L}\cong \mathcal{L}$ or $t_c^*\mathcal{L}\cong \mathcal{L}^{-1}$.

In the case when $d: = \deg \mathcal{L}\neq 0$, since the degrees of both $t_c^*\mathcal{L}$ and $L$ have the same sign, only $t_c^*\mathcal{L}\cong \mathcal{L}$ can occur. Thus $$G|_C\subseteq K(\mathcal{L}):=\Ker(\lambda_{\mathcal{L}})=\{c\in C:t_c^*\mathcal{L}\otimes \mathcal{L}^{-1}\cong \mathcal{O}_C\},$$
where $\lambda_{\mathcal{L}}$ is the polarization morphism. 
By a direct computation, under the standard identification $C\simeq \operatorname{Pic}^0(C)$, the morphism $\lambda_{\mathcal L}$ agrees with the multiplication by $d$ map.  Hence
$$
K(\mathcal L)=\Ker(\lambda_{\mathcal L})=C[d],
$$
where $C[d]:=\Ker([d]:C\to C)$ is the group of $d$-torsion points of $C$. So $G|_C$ is a subgroup of $C[d]$ . Since $G|_C$ has an element of exact order $q$, we get $q\mid d$. This proves the assertion (1).

(2) In the case when $\deg \mathcal{L}=0$ (here we remark that $\mathcal{L}$ could be the trivial line bundle), by Theorem~\ref{thm: classification of dynamical surface}(4), $\mathcal{L}$ is a torsion line bundle. Let $$\ell:=\mathrm{ord}(\mathcal{L}).$$ We may write $G=\langle t\rangle$ such that $t^{q^2}=\mathrm{id}$. Then $H=\langle h:=t^q\rangle$. By Lemma~\ref{lem: fixed locus disjoint}, $\Fix(h)$ is exactly the union of two disjoint sections. We denote them by $S_0$ and $S_\infty$. Since they are disjoint, a
numerical-class computation gives $S_\infty^2=-S_0^2$.

On the other hand, since $X=\mathbf P_C(\mathcal O_C\oplus \mathcal L)$ and
$\deg \mathcal L=0$, every section has nonnegative self-intersection.
Thus $S_0^2=S_\infty^2=0$. Hence $S_0$ and $S_{\infty}$ are the only minimal sections when $\mathcal{L}$ is non-trivial, and are equal to $C \times \{0\}$ and $C \times \{\infty\}$ in suitable coordinates when $\mathcal{L} = \mathcal{O}_C$ and hence $X = C \times \bold{P}^1$.
So $$\mathcal{O}_X(S_\infty-S_0)\cong\pi^*\mathcal{L}^{\pm 1}.$$ In particular, $\mathcal{O}_X(S_\infty-S_0)$ has exact order $\ell$.
Hence there exists some $u\in K(X)^*$ such that 
\begin{equation}\label{eq: an equation}
    \mathrm{div}(u)=\ell(S_\infty-S_0).
\end{equation}
 On each fiber $F$ of $\pi$, $h$ acts as $z\mapsto \xi_q z$ in suitable coordinates, where $\xi_q$ is a primitive $q$-th root of unity.
Moreover, the intersections of $F$ with the two minimal sections correspond to the points defined by $z=0$ and $z=\infty$. Next we prove:

\begin{claim}\label{claim: pull back invariant}
    $h^*u=(t^q)^*u=u.$
\end{claim}
\begin{proof}[Proof of Claim~\ref{claim: pull back invariant}]
Since $t$ preserves $\Fix(h)$, it either preserves both $S_0$ and $S_\infty$, or swaps them. 

If $t$ swaps $S_0$ and $S_\infty$, then $q$ is even.
In this case, $$\mathrm{div}(t^*u) = t^*\mathrm{div}(u) = -\ell(S_\infty-S_0).$$
Hence $t^*u=a/u$ for some $a\in \mathbf{k}^*$. It follows that $h^*u=(t^q)^*u=\bigl((t^2)^{q/2}\bigr)^*u=u.$ 

If $t$ preserves both $S_0$ and $S_\infty$, then $\mathrm{div}(t^*u)=\mathrm{div}(u)$, 
and hence 
$$t^*u=bu$$ for some $b\in \mathbf{k}^*$. Set $T_0=f(S_0)$ and $T_\infty=f(S_\infty)$. Then $f^*T_0=qS_0$ and $f^*T_\infty=qS_\infty.$ This and the projection formula imply that $T_0$ and $T_{\infty}$ are disjoint minimal sections, so $\mathcal{O}_X(T_\infty-T_0)$ is also an $\ell$-torsion.
Hence, there exists some $v\in K(X)^*$ such that $\mathrm{div}(v)=\ell(T_\infty-T_0).$
Pulling back this equality by $f$, we get
$$\mathrm{div}(f^*v) = \ell(f^*T_\infty-f^*T_0) = \ell(qS_\infty-qS_0) = \mathrm{div}(u^q).$$ Therefore $f^*v=\alpha u^q$ for some $\alpha\in \mathbf{k}^*$. Since $f\circ t=f$, we have $t^*(f^*v)=f^*v.$ Hence
$$\alpha u^q = f^*(v) = t^*(f^*(v)) = t^*(\alpha u^q) = \alpha (t^*u)^q = \alpha b^q u^q.$$
It follows that $b^q=1$ and $h^*u=(t^q)^*u=b^q u=u.$
This proves the claim.
\end{proof}

We continue the proof of the lemma. Restricting the equation $h^*u=u$ to a general fiber $F$ of $\pi$, we have
$$u|_F = h^*(u|_F) = \beta(\xi_q z)^{-\ell} = \xi_q^{-\ell}\beta z^{-\ell} = \xi_q^{-\ell}u|_F;$$ here we choose coordinate $z$ on $F$ so that $S_0\cap F=\{z=0\}$, $S_\infty\cap F=\{z=\infty\},$ $h|_F:z\mapsto \xi_q z$ and then $u|_F=\beta z^{-\ell}$ for some $\beta\in \mathbf{k}^*$ by the equality \eqref{eq: an equation}. It follows that $\xi_q^{\ell}=1.$
Since $\xi_q$ is a primitive $q$-th root of unity, we get $q\mid \ell$. This proves (2), and hence the lemma.
\end{proof}

Now we are ready to prove Theorem~\ref{thm: elliptic ruled} and Remark \ref{rem: low bd of deg for ed}.

\begin{proof}[Proof of Theorem~\ref{thm: elliptic ruled} and Remark \ref{rem: low bd of deg for ed}]
By Lemma~\ref{lem: being ell ruled}, $X$ is a ruled surface over an elliptic curve $C$, and we may write $X=\mathrm{Proj}_C(\mathcal{E})$ for some rank two vector bundle $\mathcal{E}$ on $C$.

Assume for the sake of contradiction that $\mathrm{ed}(f)=1$. By Proposition~\ref{prop: ed1 descend to P1}, $G$ is a finite subgroup of $\mathrm{PGL}(2, {\bf k})$ of order $\deg(f)=q^2$ for some integer $q>2$. By Lemma~\ref{lem: being cyc or dih}, $G$ is either the dihedral group $D_{q^2}$ or the cyclic group $C_{q^2}$. By Corollary~\ref{cor: split vector bundle}, $\mathcal{E}$ splits. This proves (1).

We may write $\mathcal{E}=\mathcal{O}_C\oplus \mathcal{L}$. By Lemma~\ref{lem: no dihedral Galois cover for elliptic ruled}, we may assume that $G$ is $C_{q^2}$. By Lemma~\ref{lem: a bound for q for splitting bundle}, there exists an integer $N$ such that for any $q>N$, $f$ cannot be a cyclic Galois cover. To be more specific, if $\deg(\mathcal{L})\neq 0$, we can take $N=\max\{2, {|\deg(\mathcal{L})|}\}$, and if $\deg(\mathcal{L})= 0$, then $\mathcal{L}$ is a torsion line bundle by Theorem~\ref{thm: classification of dynamical surface}(4), and we may take $N=\max\{2,\mathrm{ord}(\mathcal{L})\}$. This leads to a contradiction if we assume that $q>N$, and hence proves (2).
\end{proof}

\begin{remark}\label{rem: optimal bounds for elliptic ruled surface}
 In the following examples, we will see that the bounds for $q$ in Lemma~\ref{lem: a bound for q for splitting bundle}, hence in the proof of Theorem~\ref{thm: elliptic ruled}, are optimal.
\end{remark}

\begin{example}\label{ex: positive degree q=degree}
Let $q\geq 2$, $\omega=(1+\sqrt{1-4q})/2$, and $C=\mathbb C/(\mathbb Z+\mathbb Z\omega)$. Then multiplication by $\omega$ gives a cyclic isogeny $\alpha:C\to C$ of degree $q$ such that $\Ker\alpha=\langle a = \frac{1}{\omega}\rangle\cong\mathbb Z/q\mathbb Z$. Let $\mathcal L=\mathcal O_C(q[0])$. Then $\deg\mathcal L=q$ and $K(\mathcal L)=\Ker\lambda_{\mathcal{L}}=C[q]$, hence $a\in K(\mathcal L)$. Choose $\theta:t_a^*\mathcal L\cong\mathcal L$ such that $\theta^q=\mathrm{id}$. Let $\xi_{q^2}$ be a primitive $q^2$-th root of unity. The isomorphism
$$
t_a^*(\mathcal O_C\oplus\mathcal L)\cong\mathcal O_C\oplus t_a^*\mathcal L\xrightarrow{\mathrm{id}\oplus\xi_{q^2}\theta}\mathcal O_C\oplus\mathcal L
$$
induces an automorphism $\tau$, over $t_a$, of $X:=\mathrm{Proj}_C(\mathcal O_C\oplus\mathcal L)$.

Then $\tau^q$ acts trivially on $C$ and fiberwise by $[U:V]\mapsto[U:\xi_q V]$, so $\tau$ has order $q^2$. Let $G=\langle\tau\rangle$ and set $Z=X/G$. For $H=\langle\tau^q\rangle$, one has $X/H\cong \mathrm{Proj}_C(\mathcal O_C\oplus\mathcal L^{\otimes q})$ and the quotient map $\rho:X\to X/H$ satisfies $\rho^*\mathcal O_{X/H}(1)\cong\mathcal O_X(q)$. The residual group $G/H\cong\Ker\alpha$ descends this bundle along $\alpha$, hence $Z\cong\mathrm{Proj}_C(\mathcal O_C\oplus\mathcal M)$ for some $\mathcal M$ with $\alpha^*\mathcal M\cong\mathcal L^{\otimes q}$. Thus $q\deg\mathcal M=\deg(\mathcal L^{\otimes q})=q^2$, so $\deg\mathcal M=q=\deg\mathcal L$; since degree $q$ line bundles on $C$ are translates of one another, $Z\cong X$.

Let $p:X\to Z$ be the quotient map and define $f$ to be the composition $$f: X\xrightarrow{p}Z\cong X.$$ Then $f$ is a cyclic self-cover with Galois group $\mathbb Z/q^2\mathbb Z$, and $f^*\mathcal O_X(1)\cong\mathcal O_X(q)$. Let $\pi:X\to C$ be the ruling and let $f|_C:C\to C$ be the map induced by $f$ on the base. Then $f|_C$ is a translation composed with $\alpha$, hence $\deg(f|_C)=q$. Since $f|_C$ is on a curve and of degree $q> 1$, it is polarized. So there is an ample line bundle $A$ on $C$ such that $(f|_C)^*A\cong A^{\otimes q}$. Then $\mathcal H=\mathcal O_X(1)\otimes\pi^*A^{\otimes N}$ is ample for some $N\gg 1$ and $f^*\mathcal H\cong\mathcal O_X(q)\otimes\pi^* (f|_C)^*A^{\otimes N}\cong\mathcal H^{\otimes q}$. Thus $f$ is a $q$-polarized cyclic cover. By Lemma~\ref{lem: facts of essential dimension}(3), $\mathrm{ed}(f)=1$.
\end{example}

\begin{example}\label{ex: torsion optimality}
Take an integer $q\ge2$. Choose $\rho$ with $\rho^2=1-q$, set $\Lambda=\mathbb Z+\mathbb Z\rho$, and put $C=\mathbb C/\Lambda$. Multiplication by $\alpha=1+\rho$ preserves $\Lambda$, hence defines a self-isogeny $\alpha:C\to C$ with $\deg\alpha=(1+\rho)(1-\rho)=q$ such that $\Ker\alpha=\langle \frac{1}{\alpha}\rangle\cong\mathbb Z/q\mathbb Z$. Let $\mathcal{L}\in \Pic^0(C)$ be a line bundle of order $q$ such that $\alpha^*\mathcal L\cong \mathcal O_C$ and set $$X :=\mathbf{P}_C(\mathcal O_C\oplus \mathcal L).$$ Now we construct a $q$-polarized cyclic self-cover of $X$.
Consider the pullback of $X$ by $\alpha$: set $X_C:=X\times_{C,\alpha} C\cong C\times\mathbf{P}^1$. We define a (polarized) morphism on $C\times\mathbf{P}^1$ by $$\bar F:C\times \mathbf{P}^1\to C\times\mathbf{P}^1, \hskip 1pc (P,[U:V])\mapsto (\alpha(P),[U^q:V^q]).$$ Then $\bar F$ descends to an endomorphism $f:X \to X$ via $C\times\mathbf{P}^1=X_C\to X$, and $f$ is polarized by \cite[Theorem~1.3]{MZ18}. It remains to prove that $f$ is a cyclic cover. Choose a primitive $q^2$-th root of unity $\xi_{q^2}$. The automorphism $$\bar \tau: (P,[U:V]) \mapsto (P+\frac{1}{\alpha^2},[U:\xi_{q^2} V])$$ on $C\times\mathbf{P}^1$ descends to an automorphism $\tau$ on $X$ such that $f\circ\tau=f$ (so $f$ is $\langle \tau \rangle$ equivariant) and $\tau^{q^2}=1$. Since 
$\frac{q}{\alpha^2}=\frac{\bar \alpha}{\alpha}\equiv\frac{2}{\alpha}\in \Ker\alpha \,({\rm mod} \Lambda),$
$\tau^q$ acts on each fiber of $X\to C$ by $[U:V] \mapsto[U:\xi_q V]$. Thus $\tau$ has order $q^2=\deg f$. So $f$ is a $q$-polarized cyclic self-cover with $\Gal(f) = \langle \tau \rangle$. By Lemma~\ref{lem: facts of essential dimension}(3), $\mathrm{ed}(f)=1$.
\end{example}


\begin{thebibliography}{1}
%
%
%
%
\bibitem{Ati57}
M. F. Atiyah, 
\newblock{\em Vector bundles over an elliptic curve.} \newblock{Proc. London Math. Soc. (3) \textbf{7} (1957), no.1, 414--452.}
\bibitem{Bea10}
A. Beauville, \newblock{\em Finite subgroups of $\mathrm{PGL}_2(K)$}. \newblock{Contemp. Math., \textbf{522} (2010), 23--29.}
\bibitem{BR97}
J.~Buhler and Z.~Reichstein,
\newblock {\em On the essential dimension of a finite group.}
\newblock {Compos. Math. \textbf{106} (1997), no. 2, 159--179.}

\bibitem{BR99}
J.~Buhler and Z.~Reichstein,
\newblock {\em On Tschirnhaus transformations.}
\newblock {Topics in Number Theory. Math. Appl. \textbf{467} (1999), 127--142.}


\bibitem{FKW24}
B.~Farb, M.~Kisin, and J.~Wolfson,
\newblock {\em Essential dimension via prismatic cohomology.}
\newblock {Duke Math. J. \textbf{173} (2024), no. 15, 3059--3106.}


\bibitem{FS22}
N. Fakhruddin and R. Saini,
\newblock {\em Finite group scheme actions and incompressibility of
Galois covers: beyond the ordinary case.}
\newblock {Doc. Math. \textbf{27} (2022), 151--182.}

\bibitem{Ful}
W.~Fulton,
\newblock {\em Intersection theory.}
\newblock {Second edition. Ergebnisse der Mathematik und ihrer Grenzgebiete. 3. Folge. A Series of Modern Surveys in Mathematics, Springer-Verlag, Berlin, (1998).}

\bibitem{GKP13} D.~Greb, S.~Kebekus, and T.~Peternell,
\newblock {\em \'Etale fundamental groups of Kawamata log terminal spaces, flat sheaves and quotients of abelian varieties}.
\newblock {Duke Math. J. \textbf{165} (2016), no. 10, 1965--2004.}

\bibitem{Har} R.~Hartshorne,
\newblock {\em Algebraic geometry}.
\newblock {Graduate Texts in Mathematics, \textbf{52} (1977).}

\bibitem{KM}
J.~Koll\'ar and S.~Mori,
\newblock{\em Birational geometry of algebraic varieties}.
\newblock{Cambridge Tracts in Math, \textbf{134} (1998).}

\bibitem{KS13}
H.~Kraft and I.~Stampfli,
\newblock {\em On Automorphisms of the Affine Cremona Group.}
\newblock {Ann. Inst. Fourier. \textbf{63} (2013), no. 3, 1137--1148.}

\bibitem{KZ}
J.~Koll\'ar and Z.~Zhuang,
\newblock{\em Essential dimension of isogenies.}
\newblock{Pure Appl. Math. Q. \textbf{22} (2026), no. 2, 675–690.}

\bibitem{Laz}
R.~Lazarsfeld,
\newblock{\em Positivity in algebraic geometry. I},
\newblock{A Series of Modern Surveys in Mathematics, \textbf{48} (2004).}

\bibitem{LOZ25}
Y.~Luo, K.~Oguiso and D.-Q.~Zhang,
\newblock{Essential dimensions of polarized endomorphisms of abelian varieties},
\newblock{preprint, arXiv:2512.04942v1.}

\bibitem{LZ26}
Y.~Luo and D.-Q.~Zhang,
\newblock{Galois self-covers of projective spaces and essential dimensions}.
\newblock{preprint, arXiv:2606.10207v1.}

\bibitem{MZ18} S.~Meng and D.-Q.~Zhang,
\newblock {\em Building blocks of polarized endomorphisms of normal projective varieties}.
\newblock {Adv. Math. \textbf{325} (2018), 243--273.}

\bibitem{Mum74}
D.~Mumford,
\newblock {\em Abelian Varieties.}
\newblock {Oxford University Press, (1974).}

\bibitem{Nak02}
N. Nakayama,
\newblock{\em Ruled surfaces with non-trivial surjective endomorphisms}.
\newblock{Kyushu J. Math. \textbf{56} (2002), no. 2, 433--446.}

\bibitem{NZ}
N. Nakayama and D.-Q. Zhang,
\newblock{\em Polarized endomorphisms of complex normal varieties}.
\newblock{Math. Ann. \textbf{346} (2010), no. 4, 991--1018.}

\bibitem{Reic10}
Z.~Reichstein,
\newblock{\em Essential dimension}.
\newblock{Proceedings of the International Congress of Mathematicians. Volume II (2010), 162--188.}

\bibitem{Sat12}
M. Satriano,
\newblock{\em The Chevalley--Shephard--Todd theorem for finite linearly reductive group schemes}.
\newblock{Algebra \& Number Theory \textbf{6} (2012), 1--26.}


\bibitem{ST54}
G. C. Shephard and J. A. Todd, 
\newblock{\em Finite unitary reflection groups}. 
\newblock{Canadian J. Math. \textbf{6} (1954), 274--304.}

\bibitem{Zh10}
D.-Q.~Zhang,
\newblock {\em Polarized endomorphisms of uniruled varieties.}
\newblock {Compos. Math. \textbf{146} (2010), 145--168.}

\bibitem{ZhS06}
S.-W.~Zhang,
\newblock{Distributions in algebraic dynamics.}
\newblock{In \emph{Surveys in differential geometry. Vol. X}, \emph{Surv. Differ. Geom.}, vol. 10 (2006), 381--430.}

\end{thebibliography}
\end{document}